\renewcommand{\geq}{\geqslant}
\renewcommand{\leq}{\leqslant}
\newcommand{\sigmaq}{\sigma_{q}}
\newcommand{\val}{\operatorname{val}}
\newcommand{\SL}{\operatorname{SL}}
\newcommand{\GL}{\operatorname{GL}}
\newcommand{\Gdiag}{G_{\mathcal{D}}}
\newcommand{\Hdiag}{H_{\mathcal{D}}}
\newcommand{\Gunip}{G_{\mathcal{U}}}
\newcommand{\Hunip}{H_{\mathcal{U}}}
\newtheorem{thm}{Theorem}[section]
\newtheorem{theo}[thm]{Theorem}
\newtheorem{defn}[thm]{Definition}
\newtheorem{rem}[thm]{Remark}
\newtheorem{prop}[thm]{Proposition}
\newtheorem{lem}[thm]{Lemma}
\newtheorem{ex}[thm]{Example}
\newtheorem*{thmintro}{Theorem}
\def\k{\mathbf{k}}
\def\Rb{\mathbf{R}}
\def\L{\mathbf{L}}
\def\GL{\mathrm{GL}}
\def\bn{\mathbf{s}}
\def\baralp{\bar{\alpha}}
\def\Z{\mathbb{Z}}
\def\N{\mathbb{N}}
\def\C{\mathbb{C}}
\def\R{\mathbb{R}}
\def\Q{\mathbb{Q}}
\def\P1{\mathbb{P}^{1}}
\def\testb#1{\testb@i#1,,\@nil}%
\def\testb@i#1,#2,#3\@nil{%
  \draw[->, thick] (O) --++(#1);
  \ifx\relax#2\relax\else\testb@i#2,#3\@nil\fi}
\title[On the computation of the difference Galois groups of order three eq.]{On the computation of the difference Galois groups of order three equations.}
\author{Thomas Dreyfus}
\address{Institut de Recherche Math\'ematique Avanc\'ee, UMR 7501 Universit\'e de Strasbourg et CNRS, 7 rue Ren\'e Descartes 67084 Strasbourg, France}
\email{dreyfus@math.unistra.fr}
\author{Marina Poulet }
\address{Aix-Marseille Université, France}
\email{marina.poulet@univ-amu.fr}
\keywords{Difference Galois theory,  Algorithm,  Diferential transcendance}
\thanks{ This project has received funding from the ANR De rerum natura project (ANR-19-CE40-0018). }
\subjclass[2010]{39A06, 12H05}
\date{\today}
\begin{document}

\begin{abstract} 
In this paper we consider the problem of computing the difference Galois groups of order three equations for a large class of difference operators including the shift operator (Case S), the $q$-difference operator (Case Q),  the Mahler operator (Case M) and the elliptic case (Case~E).  We show that the general problem can be reduced to several ancillary problems. We prove criteria to detect the irreducible and imprimitive Galois groups.  Finally,  we give a sufficient condition  of differential transcendence of solutions of order three difference equations.  We also compute the difference Galois group of an equation suggested by Wadim Zudilin. 
\end{abstract}

\maketitle

\section{Introduction}

Difference Galois theory is the subject of many recent papers because, for instance, of its application to (differential) transcendence of solutions of difference equations. More precisely, consider a field $\mathbf{k}$ equipped with an automorphism $\phi$. Let $C\subset \C$ be an algebraically closed field of characteristic $0$.  Typical examples we are going to consider in this paper are 
\begin{itemize}
\item (Case S, shift)  $\k=C(z)$ and $\phi Y(z)=Y(z+h)$  with $h\in C^{*}$;
\item  (Case Q, q-difference) $\k=C\left(z^{1/*}\right):=\displaystyle \cup_{\ell=1}^{\infty}C(z^{1/\ell})$   and 
$\phi Y(z)=Y(qz)$  with $q\in C^{*}$ that is not a root of unity;
\item (Case M, Mahler) $\k=C\left(z^{1/*}\right)$  and 
$\phi Y(z)=Y(z^{p})$  with $p\ge 2$ an integer;
\item (Case E, elliptic) Let $\Lambda$ be a lattice of $\C$. Without loss of generality, we assume that $\Lambda = \Z + \Z\tau$ with $\mbox{Im}(\tau)>0$. Let $\wp(z)$ be the corresponding Weierstrass function. In this example, $\k=\mathcal{M}er (\Lambda^{\infty}):=\displaystyle \cup_{\ell=1}^{\infty}C(\wp(z/\ell),\partial_{z}\wp(z/\ell))$ 
 and  $\phi Y(z)=Y(z+h)$  with $h\in \C^{*}$ such that  $h\Z\cap \Lambda= \{0\}$.
\end{itemize}
Then, consider the difference system 
$$\phi Y=AY, \quad A\in \GL_n(\mathbf{k}).$$
The algebraic relations between functions which are entries of matrix solutions of the difference system are reflected by a difference Galois group, which is an algebraic group attached to a difference equation (see for example \cite{van2006galois,Phi15}). Roughly speaking, if the difference Galois group is sufficiently big then there are no algebraic relations between these functions. Moreover, thanks to a differential Galois theory for difference equations (see \cite{HS08}), one can attach a differential algebraic group to a difference equation, which is Zariski-dense in the difference Galois group. Under  conditions on the difference Galois groups, this theory may also prove that some solutions of linear difference equations do not satisfy algebraic differential equations  (see \cite{HS08,arreche2017galois,dreyfus2018hypertranscendence,
adamczewski2020algebraic,adamczewski2021hypertranscendence,
arreche2021differential,dreyfus2021functional}) and has been recently used to prove that, in some cases, the generating series of walks in the quarter plane are differentially transcendental, see for instance \cite{dreyfus2018nature}.  There are many other applications of difference Galois theory that motivate the computation of the difference Galois groups. We could also mention the computation of the difference Galois groups of $q$-hypergeometric equations,  see \cite{roques2011generalized}  and density theorems \cite{ramis2006q, poulet2020density} but the present introduction does not pretend to be exhaustive.

Toward the proof of differential transcendence, we often need to prove that the difference Galois group is sufficiently  big, making it important to be able to compute the latter. However, the computation of difference Galois groups is in general a difficult task. In the literature one can find many algorithms for the computation of Galois groups. The most developed area is the one of differential equations: many algorithms are given to compute differential Galois groups (which are algebraic groups attached to a linear differential system), see for example \cite{kovacic1986algorithm,singer1993galois,CS99,BCWV,dreyfus2019computing}.
There is a general algorithm for  the computation of the Galois groups of linear differential equations of an arbitrary order but it is not effective  (see for example the algorithm of Hrushovski in \cite{Hru2002} and some improvements  \cite{Feng15,amzallag2022degree}).  In contrast, the case of difference equations is less developed.  
Algorithms have been given for the computation of difference equations of order $1$ and $2$ in these four famous cases: the shift (see \cite{van2006galois,hendriks1998algorithm}), the $q$-difference (see \cite{hendriks1997algorithm}), the Mahler (see \cite{roques2018algebraic}) and the elliptic cases (see \cite{dreyfus2015galois}). For the shift case, an analogue of Hrushovski's algorithm has been developed in \cite{Feng18}. It computes the Galois groups for equations of arbitrary order whose coefficients are rational functions but is still inefficient. However, in general, for the Galois groups of difference equations of order  $3$ or more, only few things are known.  The aim of this article is to explain how one can compute Galois groups of difference equations of order $3$ under mild assumptions on the difference field to which the coefficients of the difference equations belong. In particular, it takes into account the four previous difference equations. We highlight that we are mainly concerned by the theoretical aspects of the algorithm and no study of complexity will be made.   Throughout this paper, we explain what we need to be able to do for using it in practice.   In fact, the key point to apply it concretely is to be able to find rational (resp. hyperexponential)  solutions to certain difference equations.

The starting point of this paper was  an example suggested by Wadim Zudilin. This example is  running as a common thread throughout this paper. The following double sum is related to the Rogers-Ramanujan identities 
$$U(z,t)=\displaystyle \sum_{m,n=0}^{\infty}\frac{q^{m^2+3mn+3n^2} z^{m}t^{n}}{(q;q)_m (q^3;q^3)_n}, \quad \hbox{ with } (\star;q)_n=(1-\star)\times \dots \times (1-\star q^{n-1}).$$
For each fixed value of $t$, $z\mapsto U(z,t)$ satisfies a $q$-difference equation:
\begin{equation}\label{eq0}
ty(q^3 z)  -(t+qt+q^4 z^2)y(q^2 z) +q(t-q^2 z)y(qz) +q^3 z y(z)=0.
\end{equation}
One of the questions asked by Zudilin was the computation of the Galois groups for the values $t=1$ and $t=q^2$. We are going to compute the difference Galois groups over $\C(z^{1/*})$ for every value of $t\neq 0$ and see that it does not depend on the parameter $t$.  Note that when $t=0$, \eqref{eq0} is a $q$-difference equation of order $2$ and the computation could be made using \cite{hendriks1998algorithm}. 
\begin{thmintro}[Theorem \ref{thm2}]
For all $t\in \C^*$, the difference Galois group of \eqref{eq0} is $\GL_3 (\C)$.  
\end{thmintro}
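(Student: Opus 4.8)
The plan is to compute the difference Galois group $G$ of \eqref{eq0} over $\k=\C(z^{1/*})$ by first cutting down the possibilities using the general structural results of the paper, then ruling out each proper algebraic subgroup of $\GL_3(\C)$ that survives. First I would argue that $G$ is \emph{irreducible}: using the criteria proved earlier in the paper to detect reducibility (i.e. the existence of a nontrivial rational solution of the equation, or of certain factorizations corresponding to invariant subspaces), one shows that \eqref{eq0} admits no telescoping factor — concretely, that neither the order-$3$ equation nor its exterior/symmetric square variants have rational (or hyperexponential) solutions of the relevant shape. This is precisely the ``ancillary problem'' the introduction flags as the practical key: searching for rational and hyperexponential solutions of the associated $q$-difference equations. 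Since $G\subseteq\GL_3(\C)$ and the problem is insensitive to $t\in\C^*$ (a point the introduction emphasizes), I would fix a convenient value of $t$ to make this search as clean as possible, and invoke a specialization/semicontinuity argument to transfer the conclusion to all $t\neq 0$.

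Next, assuming irreducibility, I would exclude the \emph{imprimitive} case, i.e. that $G$ normalizes a decomposition $\C^3=L_1\oplus L_2\oplus L_3$ permuted by $G$ (equivalently $G^\circ$ is diagonal and $G/G^\circ$ embeds in $S_3$). Here one applies the paper's criterion for imprimitivity, which again reduces to the non-existence of solutions of certain twisted/symmetric-power auxiliary equations (of orders related to $2$ and $6$) having a prescribed multiplicative structure. The remaining irreducible primitive subgroups of $\GL_3(\C)$ are, up to the center, the finite primitive ones (the Valentiner-type and Hessian-type groups $\mathrm{PSL}_2(\F_7)$, $A_6$, $3^2\!:\!\mathrm{SL}_2(\F_3)$, etc.), $\SO_3(\C)$ (equivalently $\mathrm{PGL}_2$ acting via the symmetric square), and $\SL_3(\C)$; then $G$ itself is one of $\SL_3(\C)\cdot Z$, $\SO_3(\C)\cdot Z$, a finite extension of the center, or the full $\GL_3(\C)$. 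I would kill the symmetric-square / $\SO_3$ possibility by checking that the symmetric square (order-$6$) equation is itself irreducible, so there is no invariant quadratic form; I would kill the finite cases by a growth/irregularity argument — the local data of \eqref{eq0} at $z=0$ and $z=\infty$ (slopes, or the $q$-analogue of the formal invariants) force an infinite, in fact non-compact, connected component, so $G^\circ$ cannot be trivial nor $\SO_3$. This leaves $G^\circ=\SL_3(\C)$ and hence $G\supseteq\SL_3(\C)$.

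Finally, to upgrade $G\supseteq\SL_3(\C)$ to $G=\GL_3(\C)$, I would compute the determinant: $\det$ of a fundamental matrix satisfies the rank-one equation $\phi(d)=(\text{const})\cdot z^{a}\, d$ obtained from \eqref{eq0} (the ratio of the leading and trailing coefficients, here proportional to $q^{3}z/t$), and one checks this $q$-difference equation of order $1$ has no solution in $\k$ — i.e. the associated character of $\Z$ is nontrivial in the appropriate class group — so the image of $\det\colon G\to\C^*$ is all of $\C^*$. Combined with $\SL_3(\C)\subseteq G$ this forces $G=\GL_3(\C)$, and the $t$-independence follows from the specialization argument already used. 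The main obstacle I anticipate is the irreducibility/imprimitivity step: ruling out \emph{all} rational and hyperexponential solutions of the order-$3$ equation together with its exterior square (order $3$) and symmetric square (order $6$) avatars is a genuine computation, and organizing it so that it is simultaneously valid for every $t\in\C^*$ (rather than redone value by value) is where the real care is needed.
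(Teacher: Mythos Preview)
Your overall architecture (irreducible $\Rightarrow$ primitive $\Rightarrow$ $D(G)=\SL_3$ $\Rightarrow$ $\det(G)=\C^*$ $\Rightarrow$ $G=\GL_3$) matches the paper, and your determinant step is essentially the paper's Example~\ref{ex3}. But two points deserve comment.

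First, the specialization/semicontinuity argument you invoke twice (``fix a convenient $t$ and transfer to all $t\neq 0$'') is a genuine gap. No such semicontinuity theorem for $q$-difference Galois groups is available here, and the introduction's remark that the answer is independent of $t$ is the \emph{conclusion}, not an input. The paper never specializes: each of Examples~\ref{ex1}--\ref{ex4} is a uniform argument valid for every $t\in\C^*$ (the Riccati degree count, the Newton polygon, the valuation argument for $\det$, and the theta torus all treat $t$ as a free nonzero parameter). You should do the same.

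Second, several of your intermediate steps take a heavier route than the paper. For imprimitivity, instead of auxiliary equations ``of orders related to $2$ and $6$'', the paper uses a one-line Newton-polygon shortcut (Example~\ref{ex2}): if $G$ were irreducible imprimitive then by Lemma~\ref{lem5} the lower boundary of the Newton polygon would be a single segment, whereas here it has a break at $(1,0)$. For excluding $\mathrm{SO}_3$, rather than proving irreducibility of the order-$6$ symmetric square, the paper (Example~\ref{ex4}) observes that $G$ contains the theta torus $\{\mathrm{Diag}(1,1,d)\}$ by \cite{RS2009}, and that such triples are not of the shape $(c,ca,ca^{-1})$ forced by $C^*\cdot\mathrm{SO}_3$. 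Finally, you need not separately eliminate the finite primitive subgroups of $\mathrm{PGL}_3$: by Proposition~\ref{prop4}, once $G$ is irreducible primitive so is $G^0$, hence $G^0$ is a connected irreducible primitive subgroup of $\GL_3$, and Proposition~\ref{prop5} already reduces the dichotomy to $D(G)\in\{\mathrm{SO}_3(\C),\mathrm{SL}_3(\C)\}$.
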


Using \cite[Corollary 3.1]{dreyfus2021functional} and Theorem \ref{thm2}, we may  prove that for all $t\in \C^*$,  there are no nontrivial algebraic differential relations between $U(z,t)$,  $U(qz,t)$ and $U(q^2 z,t)$ over $\C(z^{1/*})$.

Now, we describe more precisely the content of this paper.  After some reminders about the Galois theory of difference equations in Section~\ref{sec:GaloisGp}, we recall a criterion in Section~\ref{secfact} to know if the Galois group $G$ of a difference equation is reducible or not. In Section~\ref{sec:theoryPractice}, we explain what we need, as a framework, in order to compute $G$ in practice. In Section~\ref{sec:GaloisGpDiffceSystems} we prove general results about Galois groups of difference systems of arbitrary order. We explain the strategy for computing $G$ if $G$ is reducible in Section \ref{sec:reducible case}. If $G$ is not reducible we distinguish two cases: the imprimitive case and the primitive case. For the imprimitive case, we give information (the connected component of the identity and its index in $G$) on the possible  Galois groups which occur when the order of the difference system is a prime number in Section~\ref{sec:imprimitive case}. Moreover, we know which information correspond to $G$ thanks to the knowledge of the determinant of $G$ (which is the Galois group of an equation of order $1$). 
After these general results, we recall the strategies for computing the Galois groups of the difference equations of order $1$ and the diagonal systems in Section~\ref{sec:GaloisGpsOrderOneEtDiag} and the strategies for the difference equations of order $2$ in Section~\ref{secord2}. Section \ref{secord3} is devoted to the order three case. We give an application to the differential transcendence in Section~\ref{sec:appliDiffalTranscendence}.

\subsection*{Notations and conventions} In what follows,  all rings are commutative with identity and contain the field of rational numbers.  In particular, all fields are of characteristic zero. 

If $G$ is an algebraic group, we denote by $G^0$ the connected component of the identity of $G$.
Let $(\k,\phi)$ be a difference field, that is, a field equipped with an automorphism  and let $[A]_{\phi}$ be the difference system $\phi(Y)=AY$ where $A\in\GL_n\left(\k\right)$. To lighten notations, we denote it $[A]$ when there is no ambiguity.

\section{Difference algebra}
\label{sec:diffceAlgebra}
\subsection{Galois groups}
\label{sec:GaloisGp}

In this section, we make a short overview of the Galois theory of linear difference equations.  
For more details on what follows, we refer to \cite[Chapter~1]{van2006galois}.
A difference ring is a commutative ring $\Rb$ equipped with an automorphism $\phi : \Rb\rightarrow \Rb$. We denote by $C_{\Rb}:= \{c\in \Rb| \phi(c)=c\}$, the constants of the difference ring $(\Rb,\phi)$. A difference ideal $I$ of a difference ring $(\Rb,\phi)$ is an ideal such that $\phi(I)\subset I$.  If $\Rb$ is a field, we call $(\Rb,\phi)$ a difference field. In this case, $C_{\Rb}$ is a field, the field of constants. 
With an abuse of notations, we will often denote by $\Rb$ the difference ring $(\Rb,\phi)$. 

\medskip

\emph{Throughout this paper, we assume that $(\k,\phi)$ is a difference field such that its field of constants $C_{\k}$ is algebraically closed.}

\medskip

If we consider a difference system 
\begin{equation}
\label{eq:diffcesystembase}
    \phi(Y) = AY,\quad with \quad A\in \GL_n\left(\k\right),
\end{equation}
according to \cite[Section 1.1]{van2006galois}, there exists a difference ring extension $(\L,\phi)$ of $(\k,\phi)$ such that:
\begin{itemize}[label=$\bullet$]
    \item there exists a fundamental matrix of solutions of \eqref{eq:diffcesystembase} with entries in $\L$, that is, a matrix $U\in\GL_n\left(\L\right)$ such that $\phi\left(U\right)=AU$;
    \item $\L$ is generated as a $\k$-algebra by the entries of $U$ and $\det\left(U\right)^{-1}$;
    \item the only difference ideals of $\L$ are $\lbrace 0\rbrace$ and $\L$.
\end{itemize}
A difference ring $\L$ which satisfies these conditions is called a Picard-Vessiot ring for the system \eqref{eq:diffcesystembase} over $\k$. It is unique up to isomorphisms of difference rings over $\k$. We have the following property: $C_{\L} = C_{\k}$.

The difference Galois group $G$ of \eqref{eq:diffcesystembase} over $\k$ is the group of $\k$-linear automorphisms of $\L$ commuting with $\phi$, that is, 
$$
G:=\lbrace \sigma\in\text{Aut}(\L/\k)\mid \sigma\circ\phi=\phi\circ\sigma \rbrace \, .
$$
For any fundamental matrix $U \in \GL_n(\L)$, an easy computation shows that 
$U^{-1}\sigma(U) \in \GL_{n}(C_{\L})= \GL_{n}(C_{\k})$ for all $\sigma \in G$. 
By  \cite[Theorem~1.13]{van2006galois}, the faithful representation
$$
\begin{array}{rccl}
 \rho: & G & \rightarrow & \GL_{n}(C_{\k}) \\ 
   & \sigma & \mapsto & U^{-1}\sigma(U)
\end{array}
$$
identifies $G $ with a linear algebraic subgroup $G\subset\GL_{n}(C_{\k})$.  If we take another fundamental matrix of solutions $V$, we find an algebraic group that is conjugated to the first one. 
In this paper, by ``computation of the difference Galois group $G$'', we mean ``computation of an algebraic subgroup of $\GL_n\left(C_{\k}\right)$ conjugated to $\rho(G)$'', thus we work up to conjugations.  Abusing notations, we will still denote by $G$ the image of the difference Galois group under the above representation.\par

We denote by $\k\left< \phi,\phi^{-1}\right> $ the \"Ore algebra of noncommutative Laurent polynomials with coefficients in $\k$ such that $\phi f = \phi(f)\phi$ for all $f\in\k$.
We consider the difference equation 
\begin{equation}\label{eq:diffceeqbase}
Ly=0,\quad  \text{where}\quad  L:=\sum_{i=0}^{n}  a_i \phi^{i} \in \k\left< \phi\right> \text{ with } a_0 a_n \neq 0 \, ,
\end{equation}
that is, 
$a_0 y+\dots +a_n \phi^n (y)=0$ with $a_i\in \k$ and $a_0 a_n \neq 0 $.
We transform the difference equation \eqref{eq:diffceeqbase}
into the difference system 
$$\phi(Y)=A_L Y,  \quad\hbox{ where} \quad A_L=\begin{pmatrix}
&1&&\\ 
 &  &\ddots & \\
&&&1\\
-a_0/a_n& \dots&&-a_{n-1}/a_n
\end{pmatrix} $$
is the companion matrix associated with $L$.
By definition, the difference Galois group of \eqref{eq:diffceeqbase} is the difference Galois group of the system $[A_L]$.\par 

The change of variables $Z:=TY$,  $T\in \GL_{n}(\k)$, transforms the system  $\phi Y=AY$ with  $A\in  \GL_{n}(\k)$ into $\phi Z=BZ$ where $B:=\phi(T)AT^{-1}\in  \GL_{n}(\k)$.  Let  $A,B\in  \GL_{n}(\k)$.  We will say that $[A]$ and $[B]$ are equivalent over $\k$ if there exists   $T\in \GL_{n}(\k)$ such that $B=\phi(T)AT^{-1}$. \par 

In what follows, we assume moreover that the field $\k$ satisfies:
\begin{itemize}
    \item $\k$ is a $\mathcal{C}^{1}$-field, that is every non-constant homogeneous polynomial $P$ over $\k$ has a nontrivial zero provided that the number of its variables is more than its degree;
    \item $\k$ is the only finite algebraic difference field extension of $\k$.
\end{itemize}
All the above mentioned examples (S,Q,M,E) satisfy these two conditions, see \cite{hendriks1997algorithm,hendriks1998algorithm,dreyfus2015galois,roques2018algebraic}. 
Note that in the $q$-difference case, the field $(C(z),\phi)$ is a difference field but there are nontrivial finite  algebraic difference field extensions. For instance $C(z^{1/2})|C(z)$ is of degree two and we may extend $\phi$ on it with $\phi (z^{1/2})=q^{1/2}z^{1/2}$. This is the reason why we consider $C(z^{1/*})$ instead of $C(z)$ in that setting. Similar considerations occur in the elliptic case.

We recall that $C:= C_{\k}$ is by hypothesis algebraically closed and the characteristic of $\k$ is zero. 
The following statement will be crucial in what follows.  See \cite[Propositions~1.20 and 1.21]{van2006galois}, see also  \cite[Theorem 2.4]{dreyfus2015galois}.  
\begin{theo}
\label{theo:reduced form}
Let $G \subset \GL_{n}(C)$ be the difference Galois group over $(\k,\phi)$ of \eqref{eq:diffcesystembase}. 
Then, the following properties hold:
\begin{itemize}
\item $G/G^0$ is finite and cyclic, where $G^0$ is the identity component of $G$;
\item there exists $B \in G(\k)$ such that \eqref{eq:diffcesystembase} is equivalent to $\phi Y=BY$ over~$\k$. 
\end{itemize}
Let $\widetilde{G}$ be an algebraic subgroup of $\GL_{n}(C)$ such that~$A\in \widetilde{G}(\k)$. The following properties hold:
\begin{itemize}
\item $G$ is conjugate to a subgroup of $\widetilde{G}$;
\item any minimal element in the set of algebraic subgroups $\widetilde{H}$ of $\widetilde{G}$ for which there exists 
$T\in \GL_{n}(\k)$ such that $\phi(T)AT^{-1}\in \widetilde{H}(\k)$ is conjugate to~$G$;
\item $G$ is conjugate to $\widetilde{G}$ if and only if, for any $T\in  \widetilde{G}(\k)$ and for any proper algebraic subgroup $\widetilde{H}$ of $\widetilde{G}$, 
one has that $\phi(T)AT^{-1}\notin \widetilde{H}(\k)$.
\end{itemize}
\end{theo}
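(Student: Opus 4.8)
The plan is to deduce everything from the standard structure theory of the Picard-Vessiot extension together with the two hypotheses on $\k$ (being a $\mathcal{C}^1$-field and having no nontrivial finite algebraic difference extension). For the first bullet, that $G/G^0$ is finite is immediate since $G$ is a linear algebraic group; to see it is cyclic, I would invoke the Galois correspondence: $G^0$ corresponds to an intermediate difference field $\k \subset \k' \subset \L$ with $\k'/\k$ finite, and $\k'$ is a finite algebraic difference field extension of $\k$. If $\k$ has no nontrivial such extension then $\k' = \k$ and $G = G^0$ is connected, so $G/G^0$ is trivial, hence (vacuously) cyclic. This is the cleanest route given the hypotheses stated in the excerpt; alternatively one cites \cite[Proposition~1.20]{van2006galois} directly.

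For the existence of $B \in G(\k)$ with $[A]$ equivalent to $[B]$: this is the statement that the difference system can be put in ``reduced form''. I would argue as follows. Fix a fundamental matrix $U \in \GL_n(\L)$ so that $G$ is realized inside $\GL_n(C)$ via $\sigma \mapsto U^{-1}\sigma(U)$. One shows that the torsor $Z := \mathrm{Spec}(\L)$ over $\k$ is a $G$-torsor, and the $\mathcal{C}^1$ hypothesis on $\k$ (together with $C$ algebraically closed) forces every $G$-torsor over $\k$ to be trivial — this is where the $\mathcal{C}^1$ condition enters, via Springer's theorem / the vanishing of the relevant Galois cohomology $H^1(\k, G)$ for $G$ a connected linear algebraic group, or more elementarily via the argument in \cite[Proposition~1.21]{van2006galois}. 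Triviality of the torsor yields a $\k$-point of $Z$, equivalently a matrix $T \in \GL_n(\k)$ such that $U = TV$ for some $V$ with $V^{-1}\sigma(V) = U^{-1}\sigma(U)$ and $V$ having the property that $B := \phi(V)V^{-1}$ lies in $\GL_n(\k)$ and in fact in $G(\k)$; then $\phi(T^{-1})A T = \phi(V)\phi(V^{-1})\cdots$, and a direct computation shows $[A]$ is equivalent to $[B]$ with $B \in G(\k)$. I would reference \cite[Theorem~2.4]{dreyfus2015galois} and \cite[Propositions~1.20 and 1.21]{van2006galois} rather than reproduce the torsor argument in full.

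For the three properties concerning $\widetilde{G}$ with $A \in \widetilde{G}(\k)$: the first, that $G$ is conjugate to a subgroup of $\widetilde{G}$, follows because one can run the Picard-Vessiot construction inside $\widetilde{G}$ — the solution matrix $U$ then satisfies $U^{-1}\sigma(U) \in \widetilde{G}(C)$ for all $\sigma$, so $G \subset \widetilde{G}$ after the identification. The second property is a minimality statement: if $\widetilde{H} \subset \widetilde{G}$ admits $T$ with $\phi(T)AT^{-1} \in \widetilde{H}(\k)$, then by the first property applied to the equivalent system $[\phi(T)AT^{-1}]$ inside $\widetilde{H}$, $G$ is conjugate to a subgroup of $\widetilde{H}$; combined with the reduced-form statement (which produces such an $\widetilde H$ conjugate to $G$ itself, taking $\widetilde H = G$), minimal such $\widetilde{H}$ must be conjugate to $G$. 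The third property is just the contrapositive specialization of the second to the case $\widetilde H$ proper in $\widetilde G$: $G$ is conjugate to all of $\widetilde{G}$ iff no proper algebraic subgroup absorbs a gauge transform of $A$.

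The main obstacle is the reduced-form (torsor triviality) step: making precise why the $\mathcal{C}^1$ hypothesis guarantees $B$ can be taken with entries in $\k$ and lying in $G(\k)$, and checking that the gauge transformation landing $A$ inside $\widetilde G(\k)$ interacts correctly with the Galois group identification. Everything else is formal manipulation of the Galois correspondence plus the observation that the Picard-Vessiot ring is insensitive, up to isomorphism, to which $\widetilde G$-stable construction one uses. Since the excerpt explicitly points to \cite[Propositions~1.20 and 1.21]{van2006galois} and \cite[Theorem~2.4]{dreyfus2015galois}, I would quote those for the delicate point and only spell out the elementary deductions (the three $\widetilde G$-properties) in detail.
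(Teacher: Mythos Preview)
The paper does not prove this theorem at all; it only cites \cite[Propositions~1.20 and 1.21]{van2006galois} and \cite[Theorem~2.4]{dreyfus2015galois}. Your proposal is in the same spirit, and the overall architecture you describe (torsor triviality from the $\mathcal{C}^1$ hypothesis, then formal deductions for the three $\widetilde{G}$-properties) is indeed the route taken in those references.

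Two places in your sketch deserve a little more care, should you ever want to write the argument out rather than cite it. First, your argument that $G/G^0$ is trivial via the Galois correspondence is not quite the actual mechanism: the Picard--Vessiot ring $\L$ is in general \emph{not} a field but a finite product of integral domains permuted cyclically by $\phi$, and $G^0$ is the stabiliser of one factor; this is how cyclicity of $G/G^0$ arises in \cite[Proposition~1.20]{van2006galois}. The hypothesis ``no nontrivial finite algebraic difference extension'' is what forces each factor to have the correct field of constants, not what collapses $G/G^0$. Second, in the last $\widetilde{G}$-property the gauge matrix $T$ is restricted to $\widetilde{G}(\k)$, not to $\GL_n(\k)$, so it is not literally the contrapositive of the second property; one needs the additional observation (implicit in the torsor picture) that when $A\in\widetilde{G}(\k)$ a reduction matrix landing $A$ in any subgroup of $\widetilde{G}$ can itself be taken in $\widetilde{G}(\k)$.
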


In practice, to compute the difference Galois group, we are going to compute a gauge transformation such that the transformed system lies in $G(\k)$.
This leads to the following definition.  

\begin{defn}
Let $G \subset \GL_{n}(C)$ be the difference Galois group over $(\k,\phi)$ of the system \eqref{eq:diffcesystembase}. We say that the system \eqref{eq:diffcesystembase} is \emph{in reduced form} (or \emph{reduced}) when $A\in G(\k)$.  
\end{defn} 

Given a difference system $\phi(Y)=AY$ and an integer $\ell\geq 1$,  we can iterate it: \begin{equation}
    \label{eq:iterateSystem}
    \phi^{\ell}(Y)=A_{[\ell]}Y, \quad  \text{where} \quad  A_{[\ell]}=\phi^{\ell-1}(A)\times \dots \times A\, .
\end{equation}
Let $G_{[\ell]}$ be the difference Galois group of the system \eqref{eq:iterateSystem}. 
The following lemma may be deduced from the proof \cite[Proposition~4.6]{dreyfus2021functional} in the particular case where the parametric operator is the identity,  and is proved in  \cite[Proposition~4.10]{adamczewski2021hypertranscendence}.
It compares the Galois groups $G_{[\ell]}$,  $\ell \geq 1$. 

\begin{lem}\label{lem2}
For all $\ell \geq 1$,  $G_{[\ell]}$ has the same connected component of the identity as $G$.  Furthermore,  there exists $\ell_0\geq 1$, such that $G_{[\ell_0]}$ is the connected component of the identity of $G$.
\end{lem}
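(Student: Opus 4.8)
The plan is to identify the Picard--Vessiot ring of the iterated system $[A_{[\ell]}]$ inside the Picard--Vessiot ring $\L$ of $[A]$, and then compare Galois groups via this inclusion. First I would observe that if $U \in \GL_n(\L)$ is a fundamental matrix of solutions of $\phi(Y) = AY$, then the same $U$ satisfies $\phi^\ell(U) = A_{[\ell]} U$, so $U$ is a fundamental matrix of solutions of the iterated system as well. Hence the $\k$-subalgebra $\L_{[\ell]} := \k[U, \det(U)^{-1}]$ is a candidate for the Picard--Vessiot ring of $[A_{[\ell]}]$ over $(\k, \phi^\ell)$; the only point to check is that its sole $\phi^\ell$-invariant ideals are $\{0\}$ and itself. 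This will follow from the classical fact that a difference ring which is simple for $\phi$ decomposes as a finite direct product of $\phi^\ell$-simple difference rings permuted cyclically by $\phi$, together with the fact (coming from $C_\L = C_\k$ being algebraically closed, and $U$ being a single fundamental matrix) that $\L$ is already $\phi^\ell$-simple, or at worst becomes so after the harmless choice of a suitable $\ell$.

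Granting that $\L_{[\ell]} = \L$ is the Picard--Vessiot ring of $[A_{[\ell]}]$, the Galois group $G_{[\ell]}$ is $\{\sigma \in \mathrm{Aut}(\L/\k) : \sigma \circ \phi^\ell = \phi^\ell \circ \sigma\}$, realised inside $\GL_n(C)$ via the same fundamental matrix $U$. Since any $\sigma$ commuting with $\phi$ commutes with $\phi^\ell$, we get the inclusion $G \subset G_{[\ell]}$ of algebraic subgroups of $\GL_n(C)$. For the reverse comparison on connected components: $G$ has finite index in $G_{[\ell]}$ because $\L$ is finitely generated over $\k$ and over $\L^{G}$ (the fixed ring) one has $\mathrm{tr.deg} \, \L^{G}/\k = 0$ forces $[\L^{G} : \k]$-type finiteness — more simply, both $\L^{G}$ and $\L^{G_{[\ell]}}$ are finite over $\k$ by the $\mathcal{C}^1$-hypothesis and the assumption that $\k$ has no proper finite difference extension, hence $G$ and $G_{[\ell]}$ have the same dimension and $[G_{[\ell]} : G] < \infty$. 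Since a finite-index algebraic subgroup contains the identity component, $G^0 = G_{[\ell]}^0$, which is the first claim.

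For the second claim, I would combine the inclusions $G_{[\ell]} \subset G$ as $\ell$ ranges over multiples: if $\ell \mid \ell'$ then $\phi^{\ell'} = (\phi^\ell)^{\ell'/\ell}$, so $G_{[\ell']} \supset G_{[\ell]}$ is false — rather one has $G_{[\ell']} \subset G_{[\ell]}$ since commuting with $\phi^\ell$ implies commuting with $\phi^{\ell'}$. Wait, this runs the other way: I would instead use that $G_{[\ell]}/G_{[\ell]}^0 = G_{[\ell]}/G^0$ is a finite group, and that it is a quotient of $G/G^0$, which is finite cyclic of some order $N$ by Theorem~\ref{theo:reduced form}. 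Taking $\ell_0 = N$ (or a suitable multiple), the canonical surjection $G/G^0 \twoheadrightarrow G_{[\ell_0]}/G^0$ becomes trivial: concretely, the component group of $G_{[\ell]}$ is the image of the $\ell$-th power map, so for $\ell_0$ a multiple of the exponent of $G/G^0$ it is trivial, whence $G_{[\ell_0]} = G_{[\ell_0]}^0 = G^0$. The main obstacle is the bookkeeping in the first paragraph — establishing that $\L$, or a controlled modification of it, is genuinely $\phi^\ell$-simple so that it serves as the Picard--Vessiot ring of the iterated system; once the identification of Picard--Vessiot rings is secure, the group-theoretic comparison is routine.
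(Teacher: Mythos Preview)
There is a genuine gap at precisely the point you flag as ``the main obstacle''. The Picard--Vessiot ring $\L$ of $[A]$ is in general \emph{not} $\phi^\ell$-simple, and this is not bookkeeping but the entire content of the lemma. Take $\k=C(z)$ with $\phi f(z)=f(z+1)$ and $A=(-1)\in\GL_1(\k)$. Then $\L\cong\k\times\k$ with $\phi$ swapping the two factors; for $\ell=2$ the map $\phi^{2}$ acts componentwise, so each factor is a nontrivial $\phi^{2}$-ideal. Here $A_{[2]}=1$, the Picard--Vessiot ring of the iterated system over $(\k,\phi^2)$ is $\k$ itself, and one finds $G=\Z/2\Z$ but $G_{[2]}=\{1\}$. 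Thus your inclusion $G\subset G_{[\ell]}$ points the wrong way. The step ``commuting with $\phi$ implies commuting with $\phi^\ell$'' is fine, but it only yields $G\subset\{\sigma\in\mathrm{Aut}(\L/\k):\sigma\phi^\ell=\phi^\ell\sigma\}$, and this group is not $G_{[\ell]}$ because $\L$ is not the Picard--Vessiot ring for the $\phi^\ell$-system. This is also why your final paragraph keeps running into contradictions: with $G\subset G_{[\ell]}$ one could never reach $G_{[\ell_0]}=G^{0}$ unless $G$ were already connected.

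The correct relation is the reverse inclusion $G_{[\ell]}\hookrightarrow G$: one identifies the Picard--Vessiot ring of $[A_{[\ell]}]$ with a \emph{quotient} $\L/I$ by a maximal $\phi^\ell$-ideal $I$ of $\L$, and $G_{[\ell]}$ with the stabiliser of $I$ in $G$. The index $[G:G_{[\ell]}]$ is then the number of such maximal ideals, which divides $\ell$; this gives $G_{[\ell]}^{0}=G^{0}$ at once, and taking $\ell_0$ to be the order of the cyclic group $G/G^{0}$ forces $G_{[\ell_0]}=G^{0}$. The paper does not carry this out itself; it simply cites \cite[Proposition~4.6]{dreyfus2021functional} and \cite[Proposition~4.10]{adamczewski2021hypertranscendence}.
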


\subsection{Factorisations of difference operators}\label{secfact}
In this subsection, we focus on the reducible difference operators and make the relation with the reducible Galois groups and the Riccati equation.  
Let $G$ be an algebraic subgroup of $\GL_{n}(C)$. We say that $G$ is reducible if there exists a non trivial vector subspace $V$ of $C^n$ such that for all $\sigma\in G$, $\sigma(V)\subset V$.  We say that $G$ is irreducible otherwise. \par 
The following lemma relates the irreducibility of the difference Galois group with the factorisation of the difference operator, see \cite[Lemma 4.4]{adamczewski2021hypertranscendence}.

\begin{lem}\label{lem1}
Let $G\subset \GL_{n}(C)$ be the difference Galois group of \eqref{eq:diffcesystembase}. Then,  $G$ is reducible with a nontrivial invariant space of dimension $0<m<n$, if and only if there exists $T\in \GL_n(\k)$,  $A_1\in \GL_m(\k)$, and $A_2,A_3$ matrices of convenient size,    such that 
$$\phi(T)AT^{-1}=\left(\begin{array}{cc}
A_1 &  A_2\\
0 & A_3  
\end{array}\right).
$$
If we consider the difference system $[A_L]$ attached to \eqref{eq:diffceeqbase}, then its difference Galois group is reducible with $A_1\in \GL_m(\k)$ if and only if
$L$ admits a nontrivial right factor of order $m$.
\end{lem}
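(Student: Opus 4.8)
The plan is to establish the two equivalences separately, both resting on the Galois correspondence for Picard--Vessiot extensions and on Theorem~\ref{theo:reduced form}. For the first equivalence, suppose $G$ is reducible with an invariant subspace $V\subset C^n$ of dimension $m$. Working with a fixed fundamental matrix $U\in\GL_n(\L)$ identifying $G$ with a subgroup of $\GL_n(C)$, I would choose a basis of $C^n$ adapted to $V$, i.e.\ a constant matrix $P\in\GL_n(C)$ such that $PGP^{-1}$ stabilises the standard coordinate subspace spanned by the first $m$ vectors; replacing $U$ by $UP^{-1}$ we may assume $G$ itself is block upper triangular. Then the first $m$ columns $W$ of $U$ span a $\phi$-stable, $G$-stable $\k$-subspace of $\L^n$; concretely, the $\k$-span of those columns is stable under every $\sigma\in G$, hence (by the Galois correspondence, since $C_\L=C_\k$ and $\L^G=\k$) it is defined over $\k$, i.e.\ there is $T_1\in\mathrm{Mat}_{m\times n}(\k)$ of full rank whose rows give a $\k$-basis of the solution space of the corresponding subsystem. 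Completing $T_1$ to an invertible $T\in\GL_n(\k)$ and computing $\phi(T)AT^{-1}$, the $G$-invariance of the first $m$ coordinates forces the lower-left $(n-m)\times m$ block to vanish, giving the desired block form with $A_1\in\GL_m(\k)$. Conversely, if $\phi(T)AT^{-1}$ is block upper triangular with an $m\times m$ leading block, then by Theorem~\ref{theo:reduced form} the Galois group of $[A]$ — being the Galois group of the equivalent system $[\phi(T)AT^{-1}]$ — is conjugate to a subgroup of the block upper triangular group, hence stabilises the coordinate subspace of dimension $m$, so $G$ is reducible with an invariant subspace of that dimension.

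For the second equivalence, I would translate the operator statement into the system statement via the companion construction. If $L=L_2 L_1$ with $L_1\in\k\langle\phi\rangle$ of order $m$ a right factor, then the solution space of $L_1 y=0$ inside the Picard--Vessiot ring of $L$ is an $m$-dimensional $\phi$-invariant space on which $G$ acts, and it is $G$-stable because $G$ permutes solutions of $L_1 y=0$ (as $\sigma$ commutes with $\phi$ and fixes the coefficients of $L_1$); via the cyclic-vector dictionary this corresponds to an invariant subspace of dimension $m$ for $[A_L]$ with leading block in $\GL_m(\k)$. Conversely, given such an invariant subspace, the block-triangularisation above produces a rank-$m$ subsystem over $\k$; since $\k$ satisfies the hypotheses of Theorem~\ref{theo:reduced form} (in particular cyclic vectors exist), this subsystem is equivalent to one coming from an order-$m$ operator $L_1$, and the compatibility of the block decomposition with the $\phi$-action yields a factorisation $L=L_2L_1$ over $\k$. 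Here I would simply cite the cited reference \cite[Lemma~4.4]{adamczewski2021hypertranscendence} for the bookkeeping, since the statement is asserted to be proved there.

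The main obstacle is the descent step: showing that a $G$-stable, $\phi$-stable $\L$-subspace of solutions is actually defined over $\k$, i.e.\ admits a $\k$-rational basis. This is exactly where one uses that $\L/\k$ is a Picard--Vessiot extension with $C_\L=C_\k$ together with the Galois correspondence from \cite[Chapter~1]{van2006galois}; the point is that the fixed ring of $G$ acting on the relevant finite-dimensional $C_\k$-representation recovers the $\k$-structure. The remaining steps — the linear algebra of conjugating into block form, and the translation between operators and companion systems — are routine, so I would keep them brief and lean on the reference.
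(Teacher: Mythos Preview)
The paper does not give its own proof of this lemma: it simply records the reference \cite[Lemma~4.4]{adamczewski2021hypertranscendence} and moves on, so there is no in-text argument to compare against. Your plan is a correct outline of the standard proof behind that citation, and you correctly isolate the descent step (a $G$-stable solution subspace is defined over~$\k$) as the only non-formal point.

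One remark on efficiency: for the forward implication of the first equivalence you are doing more work than the paper's framework requires. Once you have conjugated $G$ by some $P\in\GL_n(C)$ into the block upper triangular subgroup $H_m\subset\GL_n(C)$, Theorem~\ref{theo:reduced form} (second bullet) directly furnishes a reduced form $B\in G(\k)\subset H_m(\k)$ together with $T\in\GL_n(\k)$ satisfying $\phi(T)AT^{-1}=B$; no explicit Galois-descent on columns of a fundamental matrix is needed. Your hands-on descent argument is valid, but it re-derives what the standing hypotheses on $\k$ (the $\mathcal{C}^1$-field condition and the absence of proper finite difference extensions) have already been used to package into Theorem~\ref{theo:reduced form}. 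The converse direction, and your treatment of the operator-factorisation equivalence via solution spaces of right factors and cyclic vectors, are fine as sketched.
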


Thus, to determine whether $G$ is irreducible or not, we have to determine whether the difference operator $L$ defined in \eqref{eq:diffceeqbase} admits a nontrivial right factor or not.  For right factors of order one, we have the following criterion. 

\begin{lem}[Lemma 2 in \cite{abramov2021rational}]\label{lem:eq left factor}
Let $L$ be the difference operator defined in \eqref{eq:diffceeqbase}.
The operator of order one $\phi-\alpha $ is a right factor of $L$ if and only if 
$$
\sum\limits_{i=0}^n a_i \prod\limits_{j=0}^{i-1} \phi^j(\alpha) = 0.
$$
\end{lem}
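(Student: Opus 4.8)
The statement to prove is the criterion of Abramov--Petkovšek (Lemma~2 in \cite{abramov2021rational}): the first-order operator $\phi-\alpha$ is a right factor of $L=\sum_{i=0}^n a_i\phi^i$ if and only if $\sum_{i=0}^n a_i\prod_{j=0}^{i-1}\phi^j(\alpha)=0$. The plan is to unwind the definition of ``right factor'' directly. By definition, $\phi-\alpha$ is a right factor of $L$ when $L=Q(\phi-\alpha)$ for some $Q\in\k\langle\phi\rangle$; equivalently (this is the key reduction), it suffices to evaluate everything on a nonzero solution. Concretely, over the Picard--Vessiot ring there is a nonzero $y$ with $\phi(y)=\alpha y$ (i.e. $y$ is a solution of the order-one operator $\phi-\alpha$), and the factorisation holds if and only if such a $y$ also satisfies $Ly=0$. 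So the heart of the proof is the computation of $Ly$ for such a $y$.

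\textbf{Key steps.} First I would observe that if $\phi(y)=\alpha y$, then by an immediate induction $\phi^i(y)=\left(\prod_{j=0}^{i-1}\phi^j(\alpha)\right)y$ for every $i\geq 0$ (with the empty product equal to $1$ for $i=0$). Substituting into $L$ gives
\[
Ly=\sum_{i=0}^n a_i\phi^i(y)=\left(\sum_{i=0}^n a_i\prod_{j=0}^{i-1}\phi^j(\alpha)\right)y.
\]
Since $y\neq 0$ and we are working in a domain (the Picard--Vessiot ring of $\phi-\alpha$, or more elementarily one may pass to a difference ring extension in which such a nonzero $y$ exists and is a non--zero--divisor), this vanishes if and only if the scalar $\sum_{i=0}^n a_i\prod_{j=0}^{i-1}\phi^j(\alpha)$ is zero. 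The remaining point is to justify the equivalence ``$\phi-\alpha$ is a right factor of $L$'' $\Longleftrightarrow$ ``every solution of $\phi(y)=\alpha y$ is a solution of $Ly=0$''. For the forward direction, if $L=Q\cdot(\phi-\alpha)$ then $(\phi-\alpha)y=0$ forces $Ly=Q((\phi-\alpha)y)=0$. For the converse one performs right Euclidean division of $L$ by $\phi-\alpha$ in the Ore algebra $\k\langle\phi\rangle$ (possible since $\phi-\alpha$ is monic in $\phi$ up to a unit): write $L=Q\cdot(\phi-\alpha)+r$ with $r\in\k$; applying this to $y$ gives $0=Ly=Q((\phi-\alpha)y)+ry=ry$, hence $r=0$, so $\phi-\alpha$ divides $L$ on the right.

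\textbf{Main obstacle.} The only genuinely delicate point is purely formal rather than computational: one must make sure that a nonzero solution $y$ of $\phi(y)=\alpha y$ lives in a difference ring extension of $\k$ in which it is not a zero divisor, so that $Ly=(\text{scalar})\cdot y=0$ really does force the scalar to vanish. This is guaranteed by taking the Picard--Vessiot ring of the order-one system $[\alpha]$ (its only difference ideals are $0$ and the whole ring, and $y=\det(U)$ is invertible there, hence certainly a non--zero--divisor). Alternatively, and perhaps more cleanly for a one-line lemma, one avoids solutions altogether and argues entirely inside $\k\langle\phi\rangle$ via the right-division computation above, noting that the remainder of $L$ upon right division by $\phi-\alpha$ is exactly $\sum_{i=0}^n a_i\prod_{j=0}^{i-1}\phi^j(\alpha)$ — this identity is proved by the same induction $\phi^i\equiv\prod_{j=0}^{i-1}\phi^j(\alpha)\pmod{\k\langle\phi\rangle(\phi-\alpha)}$. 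I would present the solution-based argument as the main line, since it is the most transparent, and remark that the division argument gives an equivalent, constant-free proof.
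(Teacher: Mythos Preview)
Your proof is correct. Both routes you sketch are valid: the solution-based argument (take a nonzero invertible $y$ in the Picard--Vessiot ring of $[\alpha]$, compute $\phi^i(y)=\bigl(\prod_{j=0}^{i-1}\phi^j(\alpha)\bigr)y$, and cancel $y$) and the purely Ore-algebraic argument via right Euclidean division by the monic operator $\phi-\alpha$. In the latter, the key inductive identity $\phi^i\equiv\prod_{j=0}^{i-1}\phi^j(\alpha)\pmod{\k\langle\phi\rangle(\phi-\alpha)}$ follows from $\phi\cdot r=\phi(r)\phi$ in the Ore algebra, exactly as you indicate.

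There is, however, nothing to compare against: the paper does not supply its own proof of this lemma. It is stated as a citation of Lemma~2 in \cite{abramov2021rational} and used as a black box. So your write-up stands on its own; if anything, the division argument is the cleaner one to present here since it avoids the detour through a Picard--Vessiot extension and the (minor) worry about zero-divisors, and it makes transparent that the Riccati expression is literally the remainder of $L$ upon right division by $\phi-\alpha$.
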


Although the above equation,  called the Riccati equation,   is nonlinear it might be solved in some situations we are going to describe later.\par 
When $L$ has order two,  it admits a nontrivial factor if and only if it admits a nontrivial right factor of order one. \par
When $L$ has order three, it admits a nontrivial factor if and only if it admits a nontrivial factor of order one.  The latter may be either a right or a left factor.  Let us see how we can pass from left factor to right factor.   
 We define the dual $M^{\vee} $ of an operator $M=\sum_{i=m}^{n}   a_{i}\phi^{i} \in\k\left< \phi,\phi^{-1}\right>$ by 
$$
M^{\vee} := \sum_{i=m}^{n}  \phi^{-i} (a_{i})
\phi^{-i}.$$ 
By linearity, if $M_1, M_2\in \k\left< \phi,\phi^{-1}\right>$, we can check that
$$
 (M_1 M_2)^{\vee}=M_2^{\vee}M_1^{\vee}. 
$$
Hence, an operator has a left factor of order one if and only if its dual has a right factor of order one.   So we will focus on right factors of order one. 

\begin{rem}
\label{rem:rightFactorOrder1}
Assume $n>1$ and that $L$ admits a right factor of order one so let us write $L$ as 
$\widetilde{L}(\phi-\alpha)$ where $\alpha\in \k$ and  $\widetilde{L}\in\k\left< \phi,\phi^{-1}\right>$ has order $n-1$.  
Let $y_1$ be a solution of $Ly=0$ such that $\phi(y_1)\neq \alpha y_1$ and let $y_2:=\phi(y_1)-\alpha y_1$. By construction $y_2\neq 0$ is a solution of $\widetilde{L}y=0$. If $Y:=\left(y_1,y_2,\ldots,\phi^{n-2}(y_2)\right)^t$ then $\phi(Y)=\widetilde{A}Y$ where
$$
\widetilde{A}:=\left(\begin{array}{c|cccc}
\alpha &  1 & 0 & \cdots & 0 \\
\hline
0 & & & &  \\
\vdots & & & A_{\widetilde{L}}&  \\
0 & & & & 
\end{array}\right)\, .
$$
Thus, the companion system associated with $Ly=0$ is equivalent to the system $[\widetilde{A}]$.  Furthermore, $A_{\widetilde{L}}$ is a companion matrix associated with $\widetilde{L}$.\end{rem}

\subsection{From theory to practice}
\label{sec:theoryPractice}
We recall that the aim of this paper is to explain how one can compute theoretically the Galois group of a difference equation of order $3$.
In practice we may also need to make more assumptions on the difference field $(\k,\phi)$ in order to compute effectively the difference Galois groups:
\begin{itemize}
\item
Assume that $\mathbf{k}$ is an effective field,
i.e. that one can compute representatives of the four operations $+,-,\times,/$ and one can effectively test whether two elements of $\mathbf{k}$ are equal.  
\item 
Assume that, given a homogeneous linear difference system $[A]$ with $A\in \GL_n(\mathbf{k})$, we can effectively find a 
basis of its rational solutions, i.e. its solutions $Y\in \mathbf{k}^n$.
\item Assume that given a difference operator $L$, if $L$ admits at least one right factor of order one, then we are able to compute effectively one of such right factors.
\end{itemize}

When $C=\overline{\Q}$,  for the cases (S, Q, M), the  two first points are satisfied. One can find some references for the second point in this table:
\medskip
\begin{center}
\begin{tabular}{|c|c|c|c|c|}
\hline
    cases &  (S) & (Q) & (M)&(E) \\
\hline 
    ref. & \cite{Abr95,AB98,vanHoeij1998,AB2001} & \cite{Abr95,AB2001,Abr02} & \cite{CDDM18} & \cite{dreyfus2015galois,combot2022hyperexponential}\\
\hline
\end{tabular}
\end{center}
\medskip 

Note that in the elliptic case the second  point is not satisfied, unless considering the case $n=2$, see \cite{dreyfus2015galois} (note that the latter is concerned by the more general problem of solving Riccati equations), or when the framework is modified,  see for instance \cite{combot2022hyperexponential}.  Thus, in that case, our algorithm may not lead to a systematic computation of the difference Galois group. It will reduce the computation to the determination of rational solutions of certain linear difference equations that may be complicated to solve concretely.
Throughout this paper, we will emphasize which kind of equations we have to be able to solve.

We saw in Section \ref{secfact} that the study of the irreducibility of the Galois groups is closely related to the factorisation of the difference operator.  From Lemma \ref{lem:eq left factor}, when we look at a right factor of order one, we have to solve a Riccati equation of the form  $$
\sum\limits_{i=0}^n a_i \prod\limits_{j=0}^{i-1} \phi^j(\alpha) = 0.
$$

When $C=\overline{\Q}$,  we are able to compute right-factors of order $1$ in the cases (S,Q,E) but we want to emphasis that the complexity is exponential, in the order of the equation, contrary to the search of rational solutions, whose complexity is polynomial. 
   Here-below, one can find some references for these computations:

\medskip
\begin{center}
\begin{tabular}{|c|c|c|c|}
\hline
    cases &  (S) & (Q) & (E)\\
\hline 
ref.  & \cite{Petkovsek92,CvH06,abramov2021rational} & \cite{APP98,CvH06,abramov2021rational} &\cite{dreyfus2015galois,combot2022hyperexponential} \\
\hline
\end{tabular}
\end{center}
\medskip 
For the Mahler case, the computation of Riccati solution is a work in progress by the first author of this paper, Chyzak, Dumas and Mezzarobba.
\medskip
\begin{ex}\label{ex1}
Let us now present the strategy of \cite{abramov2021rational} to show that the difference Galois group of  \eqref{eq0} over $\C(z^{1/*})$ is irreducible, which is equivalent to show that the corresponding operator has no factor of order $1$. We point out the fact that $\C(z^{1/*})$ is not an effective field but the strategy still work in that case. 
Assume that $t\neq 0$. 
 Let us begin by proving that there is no right factor of order one. Let 
 $$L=a_0+a_1 \phi+ a_2\phi^2+a_3 \phi^3$$
  be the $q$-difference operator corresponding to \eqref{eq0}, and let $L^{\vee} $ be its dual. 
By  \cite[Lemma 3]{abramov2021rational},
 For any $\alpha \in \k^{\times}$  there exist $\lambda \in \C^{\times}$, $\ell\in\N^*$ and unitary $b,c,r\in \C[z^{1/\ell}] \setminus \{0\}$  such that 
$$
  \alpha =\lambda\hspace{0.1cm}  \frac{b}{c} \hspace{0.1cm}\frac{\phi(r)}{r}
$$
 and 
\begin{itemize}
\item $\gcd (b, \phi^k (c))=1$ for all $k\in \lbrace 0,1, 2\rbrace$;
\item $\gcd (b, r)=1$;
\item $\gcd (c, \phi (r))=1$.
\end{itemize}
Using this, one can deduce by  \cite[Lemma 3]{abramov2021rational} that if $\phi-\alpha$ is a right factor of order $1$ of $L$ for some  $\alpha \in \k^{\times}$, then, in the above decomposition for $\alpha$, we have $b \vert a_{0}$,   $c \vert \phi^{-2}(a_{3})$,  and 
\begin{equation}\label{eq:bla}
 \lambda^3  a_{3} b \phi(b)\phi^2 (b)  \phi^3 (r) +\lambda^2 a_{2}  b \phi(b)\phi^2 (c) \phi^2 (r)+\lambda a_{1} b \phi(c)\phi^2 (c)  \phi (r)+ a_{0} c \phi(c)\phi^2 (c) r=0.
\end{equation}

The strategy to decide whether $L$ has a right factor of order one is the following.  For every unitary pair   $(b,c)\in \left( \C[z^{1/\ell}]\setminus \{0\}\right)^2$ such that $b \vert a_{0}$,   $c \vert \phi^{-2}(a_{3})$,  we decide whether there exist $\lambda \in \C^{\times}$ and $r\in \C[z^{1/\ell}] \setminus \{0\}$ such that \eqref{eq:bla} holds. If we find a solution then we have a right factor of order one, otherwise there is no right factor of order one.
Here, since $a_0=q^3 z$, $a_3=t$ are monomials in $z$ (recall that $t\neq 0)$ the choices are very limited. The only possibilities for the pair $(b,c)$ are $(z^d,1)$, with $0\leq d\leq 1 $.   Recall that $\deg(a_3)=0$,  $\deg(a_2)=2$,  $\deg(a_1)=\deg(a_0)=1$.  
The degrees of the terms in \eqref{eq:bla} are respectively given by  
$$3d+\deg r(z), \quad 2+2d+\deg r(z), \quad 1+d+\deg r(z), \quad 1+\deg r(z)$$
and a direct calculation shows that, for $0\leq d\leq 1 $,  $2+2d+\deg r(z)$ is greater than the remaining ones, whence there is no solution.
  We proceed similarly with the dual for the left factor of length one.
  \begin{multline*}
 L^{\vee}=a_{3}(q^{-3}z)\sigmaq^{-3}  + a_{2}(q^{-2}z)\sigmaq^{-2} +a_{1}(q^{-1}z)\sigmaq^{-1} +a_{0}(z)\\
=\sigmaq^{-3}\Big(a_{3}(z)  +a_{2}(qz)\sigmaq +a_{1}(q^{2}z)\sigmaq^{2} +a_{0}(q^{3}z)\sigmaq^{3}\Big)
\end{multline*} 
Here the possible pairs for $(b,c)$ are $( 1,z^d)$, with $0\leq d\leq 1 $. The degrees of the terms of the equation corresponding to \eqref{eq:bla}  are
$$1+\deg r(z), \quad 1+d+\deg r(z), \quad 2+2d+\deg r(z), \quad 3d+ \deg r(z) .$$
Again, for $0\leq d\leq 1 $,  $2+2d+\deg r(z)$ is greater than the remaining ones.
\end{ex}

\section{Galois groups of difference systems}\label{sec:GaloisGpDiffceSystems}

Consider the difference system \eqref{eq:diffcesystembase} and let 
 $G\subset \GL_{n}(C)$ be its Galois group.   In order to compute $G$, we follow the strategy of the seminal paper \cite{hendriks1998algorithm}, that is,  computing a gauge transformation $T\in \mathrm{GL}_n (\k)$ such that $[\phi(T)AT^{-1}]$ is in reduced form.  We will study the following cases (definitions of reducible and imprimitive groups are given in the corresponding section): 
 \begin{itemize}
 \item $G$ is reducible;
 \item $G$ is irreducible and imprimitive;
 \item $G$ is irreducible and primitive. 
 \end{itemize}
Before considering the particular case of difference equations of order $2$ and $3$, the aim of this section is to prove some general results for the Galois groups $G$ of systems of the form \eqref{eq:diffcesystembase}, with $n\geq 2$.  

\subsection{The reducible case}
\label{sec:reducible case}

Let us treat the case where the Galois group $G$ is reducible.  By Lemma  \ref{lem1},  $G$ is reducible if and only if there exist $T\in \mathrm{GL}_n(\k)$,  $A_1\in \mathrm{GL}_m(\k)$, $m<n$,  and $A_2,A_3$ matrices of convenient size,    such that 
$$\phi(T)AT^{-1}=\left(\begin{array}{cc}
A_1 &  A_2\\
0 & A_3  
\end{array}\right).
$$

The strategy of reduction will follow the one given in the differential setting in \cite[Section 5]{dreyfus2019computing}.   Let us assume that the bloc diagonal system $[D]$, where $D:=\begin{pmatrix}
A_1&0 \\ 
0 &A_3 
\end{pmatrix}$, is in reduced form and let us see how to reduce $[A]$.  In general, $[D]$ is not in reduced form and we have to reduce the bloc diagonal system with a bloc diagonal gauge transformation. 
For small orders, we will see how to do this in practice but we want to emphasize the fact that reducing the bloc diagonal system is a hard task in general.   We denote by $\Gdiag$ the Galois group of $[D]$.

\begin{lem}
 \label{lem:Gauge transfo reduc}
 Let $B$ be a matrix of the form $\begin{pmatrix}
 A_1 & A_2 \\ 0 & A_3
 \end{pmatrix}$ where $D:= \begin{pmatrix}
 A_1 & 0 \\ 0 & A_3
 \end{pmatrix}$ is such that the system $[D]$ is in reduced form. 
A matrix $T\in \GL_n(\k)$ such that $\left[\phi(T)BT^{-1}\right]$ is reduced can be found of the form $T=\begin{pmatrix}
\mathrm{Id}_m & \star \\ 
0 & \mathrm{Id}_{n-m}\\
\end{pmatrix}$. 
\end{lem}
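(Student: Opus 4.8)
The plan is to look for a gauge transformation $T$ that is block upper triangular with identity blocks on the diagonal, i.e. $T = \begin{pmatrix} \mathrm{Id}_m & U \\ 0 & \mathrm{Id}_{n-m} \end{pmatrix}$ with $U \in M_{m,n-m}(\k)$ to be determined, and to show that one can choose $U$ so that $[\phi(T)BT^{-1}]$ is reduced. First I would compute $\phi(T)BT^{-1}$ explicitly: writing $B = \begin{pmatrix} A_1 & A_2 \\ 0 & A_3 \end{pmatrix}$ and $T^{-1} = \begin{pmatrix} \mathrm{Id}_m & -U \\ 0 & \mathrm{Id}_{n-m} \end{pmatrix}$, one gets
$$
\phi(T)BT^{-1} = \begin{pmatrix} A_1 & \phi(U)A_3 + A_2 - A_1 U \\ 0 & A_3 \end{pmatrix},
$$
so the diagonal blocks are unchanged and the off-diagonal block becomes $A_2 + \phi(U)A_3 - A_1 U$. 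Since $[D]$ is already in reduced form, $D \in \Gdiag(\k)$, and the group of block upper triangular matrices with diagonal blocks in $\Gdiag$ is an algebraic group $\widetilde{G}$ containing $B$; by Theorem~\ref{theo:reduced form}, the Galois group $G$ of $[B]$ is conjugate to a subgroup of $\widetilde{G}$, and moreover (by the minimality characterization in that theorem applied inside $\widetilde G$, together with Lemma~\ref{lem2} ensuring we stay in the rational framework) there is a $T \in \widetilde{G}(\k)$ with $\phi(T)BT^{-1} \in G(\k)$. The point is to argue that such a reducing $T$ can be taken of the special block-unipotent form.

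The key step is therefore to show that the unipotent radical part and the block-diagonal part of the reducing transformation can be separated. Concretely, any $T \in \widetilde{G}(\k)$ factors as $T = T_u T_d$ where $T_d$ is block diagonal (with blocks in $\GL_m(\k)$, $\GL_{n-m}(\k)$ respectively) and $T_u = \begin{pmatrix} \mathrm{Id}_m & U \\ 0 & \mathrm{Id}_{n-m}\end{pmatrix}$ is block-unipotent. Since $[D]$ is in reduced form, conjugating $D$ by a block-diagonal $T_d$ keeps it in $\Gdiag(\k)$ but cannot make the Galois group smaller — here I would invoke that $[D]$ reduced means $D \in \Gdiag(\k)$ and that the block-diagonal minimal reduction is already achieved, so the block-diagonal factor $T_d$ contributes nothing essential and may be absorbed. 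Hence it suffices to reduce using the block-unipotent factor alone, i.e. to find $U$ with $A_2 + \phi(U)A_3 - A_1 U$ chosen so that the resulting matrix lies in $G(\k)$; the minimality clause of Theorem~\ref{theo:reduced form}, now applied to the algebraic group $\widetilde{G}_u$ of block-unipotent matrices over the fixed reduced diagonal $D$ (or rather to $\widetilde G$ with the observation that any reduction can be pushed into the unipotent radical after the diagonal part is fixed), yields such a $U \in \k$.

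The main obstacle I expect is exactly this decoupling argument: justifying rigorously that the block-diagonal component of a general reducing transformation can be discarded once $[D]$ is assumed reduced. This requires being careful about what "minimal" means — one should argue that if $T = T_u T_d \in \widetilde G(\k)$ achieves $\phi(T)BT^{-1}\in \widetilde H(\k)$ for a minimal $\widetilde H$, then because $\phi(T_d) D T_d^{-1}$ is still a reduced-form matrix for the diagonal system, the conjugation by $T_d$ amounts to an inner twist within $\Gdiag$, and one can replace $T$ by $T_u' := T_d^{-1} T_u T_d$-type adjustment, or more cleanly, observe that the set of matrices $\phi(S)BS^{-1}$ with $S$ block-unipotent over $\k$ already meets every minimal reducing subgroup. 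The cleanest route is probably: pick any reducing $T$, write $T = T_u T_d$, note $\phi(T_d) D T_d^{-1} = D'$ is reduced for the diagonal system hence $D' \in \Gdiag(\k)$, and then the first-coordinate computation shows $\phi(T_u)B'T_u^{-1}$ for the appropriate $B'$ is reduced, with $T_u$ of the claimed form; finally undo the harmless block-diagonal conjugation. I would present the forward direction (that block-unipotent $T$ suffices) via this explicit computation and the minimality statement of Theorem~\ref{theo:reduced form}, and dispatch the harmless-diagonal-conjugation bookkeeping in a sentence or two.
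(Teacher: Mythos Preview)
Your overall strategy matches the paper's: take a reducing transformation inside the ambient block upper-triangular group, factor it as (block-unipotent)$\times$(block-diagonal), and show the block-diagonal factor can be discarded. You also correctly identify the obstacle: justifying that discarding step. But the fixes you sketch (``$T_d^{-1}T_uT_d$-type adjustment'', ``undo the harmless block-diagonal conjugation'') do not close the gap, and this is where your proposal is incomplete.

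The issue is this. With your factorisation $T=T_uT_d$ and $T_d\in\Gdiag(\k)$, you get $\phi(T)BT^{-1}=\phi(T_u)\bigl(\phi(T_d)BT_d^{-1}\bigr)T_u^{-1}\in G(\k)$. From this you cannot conclude that $\phi(T_u)BT_u^{-1}\in G(\k)$: the inner matrix $\phi(T_d)BT_d^{-1}$ is not $B$, and $T_d$ need not lie in $G(\k)$ (only in $\Gdiag(\k)$), so you cannot peel it off as an inner automorphism of $G$. Saying ``$[D]$ is already reduced, so $T_d$ contributes nothing essential'' is exactly the point that needs a real argument.

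The paper's resolution is a lifting step you are missing. One first checks that the smallest block upper-triangular algebraic group $H$ with $B\in H(\k)$ has block-diagonal quotient $\Hdiag$ equal to the \emph{smallest} such group containing $D$; since $[D]$ is reduced, this forces $\Hdiag=\Gdiag$. Hence the block-diagonal part $R_{\mathcal D}$ of a reducing $R\in H(\k)$ lies in $\Gdiag(\k)$. Now use the surjection $G\twoheadrightarrow G/\Gunip\cong\Gdiag$ to \emph{lift} $R_{\mathcal D}$ to some $Q\in G(\k)$ with the same diagonal blocks as $R$. Then $T:=Q^{-1}R$ is block-unipotent by construction, and $\phi(T)BT^{-1}=\phi(Q)^{-1}\bigl(\phi(R)BR^{-1}\bigr)Q\in G(\k)$ because $\phi(R)BR^{-1}\in G(\k)$ and $Q,\phi(Q)\in G(\k)$ (the defining equations of $G$ have constant coefficients, so $\phi$ preserves $G(\k)$). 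That lift $R_{\mathcal D}\rightsquigarrow Q\in G(\k)$ is the mechanism that ``absorbs'' the diagonal factor; without it, your argument does not go through.
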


\begin{proof}
It is the same kind of results as  \cite[Theorem 2.4]{dreyfus2019computing} adapted to this difference context.  We denote by $G$ (respectively by $\Gdiag$) the Galois group of $[B]$  (respectively of $[D]$). Let $H$ be the smallest algebraic group of upper bloc triangular matrices such that $B\in H(\k)$. It is an algebraic group whose elements are of the form $\begin{pmatrix} \mathscr{H}_1 & \mathscr{H}_2 \\ 0 & \mathscr{H}_3 \end{pmatrix}$. We denote by $\Hdiag$ the group whose matrices are of the form $\begin{pmatrix} \mathscr{H}_1 & 0\\ 0 & \mathscr{H}_3 \end{pmatrix} $ where $\begin{pmatrix} \mathscr{H}_1 & \mathscr{H}_2\\ 0 & \mathscr{H}_3 \end{pmatrix}\in H $ for a matrix~$\mathscr{H}_2$, it is a block diagonal algebraic group.
Let $\Hunip$ (respectively $\Gunip$) be the subgroup of $H$ (respectively $G$) whose elements are of the form $\begin{pmatrix}
\mathrm{Id}_m & \star \\ 0 & \mathrm{Id}_{n-m}
\end{pmatrix}$. We have $\Hdiag\sim H/\Hunip$. 
Let us prove that the group $\Hdiag$ is the smallest algebraic group 
such that $D\in \Hdiag (\k)$. Indeed, considering the projection morphism $ \pi : H \rightarrow  \Hdiag$, where we identify $H/\Hunip$ and $\Hdiag$, if there exists an algebraic group $\widetilde{\Hdiag}\subsetneq \Hdiag$ such that $D\in \widetilde{\Hdiag}(\k)$, then the algebraic group  $\widetilde{H}:=\pi^{-1}(\widetilde{\Hdiag})\subsetneq H$ is such that $B\in \widetilde{H}(\k)$, which contradicts the minimality of $H$.
  Since $[D]$ is in reduced form,   we have  $\Gdiag= \Hdiag$.  
Let $R = \begin{pmatrix} R_1 & R_2 \\ 0 & R_3 \end{pmatrix}\in H(\k)$ be such that the system $\left[\phi(R)BR^{-1}\right]$ is in reduced form. Let $R_{\mathcal{D}} :=\begin{pmatrix} R_1 & 0 \\ 0 & R_3 \end{pmatrix} \in \Hdiag(\k)=\Gdiag(\k)$.   

Since $\Gdiag\sim G/\Gunip$, there exists $Q_2$, such that $Q:= \begin{pmatrix}
R_1 & Q_2 \\ 0 & R_3
\end{pmatrix}\in G(\k)$. Therefore $Q^{-1} = \begin{pmatrix}
R_1^{-1} &\star \\ 0 & R_3^{-1}
\end{pmatrix}\in G(\k)$. A gauge transformation of $\phi(R)BR^{-1}\in G(\k)$ by the matrix  $Q^{-1}\in G(\k)$ gives an element of $G(\k)$. Hence $\left[\phi(T)BT^{-1}\right]$ is in reduced form where $T:= Q^{-1}R = \begin{pmatrix}
\mathrm{Id}_m & \star \\ 0 & \mathrm{Id}_{n-m}
\end{pmatrix}$.
\end{proof}

\begin{rem}\label{rem1}
As in \cite[Section 5]{dreyfus2019computing},  we may also consider the case where the system has more than two blocs.   More precisely, consider a bloc triangular system $[A]$, where  $$A=\left(\begin{array}{cccc}
A_1 &  A_{1,2}&\dots  &\\
 & A_2 &   A_{2,3}\\
&&\ddots& \\
0&&&A_k
\end{array}\right).$$
Let us assume that the bloc diagonal system $[D]$, with $D=\mathrm{Diag}(A_1,\dots, A_k)$, is in reduced form.   Then, the proof of Lemma \ref{lem:Gauge transfo reduc} may be adapted in that context and we find a gauge transformation $T=\left(\begin{array}{cccc}
\mathrm{Id} &  \star &\dots  & \star\\
0 & \mathrm{Id} & \ddots & \vdots \\
&&\ddots&  \star  \\
&&&\mathrm{Id}
\end{array}\right)\in \GL_n (\mathbf{k})$ such that $[\phi(T)A T^{-1}]$ is in reduced form.  
\end{rem}

\subsection{The imprimitive case}
\label{sec:imprimitive case}

 Let $G$ be an algebraic subgroup of $\GL_{n}(C)$.  We say that $G$ is imprimitive if there exists a nontrivial decomposition into $C$-vector spaces  $C^n =V_1 \oplus \dots \oplus V_k$, such that any element of $G$ induces a permutation between the $V_i$.  We say that $G$ is primitive otherwise.  

Consider $G$, an imprimitive group and let us write a decomposition   $C^n =V_1 \oplus \dots \oplus V_k$.  The image of $V_1$ under  an element of $G$ is of the form $V_j$,  where $V_j$ and $V_1$ have the same dimension. If we further assume that $G$ is irreducible, for all $1\leq j \leq k$, there exists $\sigma\in G$, such that  $\sigma (V_1)=V_j$, proving that each $V_j$ has the same dimension and thus is a divisor of the order $n$ of the equation \eqref{eq:diffcesystembase}.   Therefore,  when $n$ is prime and $G$ is irreducible,  the situation is simplified since every space has dimension one. 
In what follows, we  give information on the irreducible imprimitive groups that may occur as difference Galois groups when the order $n$ of the system \eqref{eq:diffcesystembase} is prime (see Theorem \ref{thm1}). Moreover, $\det(G)$ indicates which one of this information correspond to the Galois group $G$  (see Proposition~\ref{prop:detDistinguish}). More precisely, Theorem \ref{thm1} and Proposition~\ref{prop:detDistinguish} imply the following result.

\begin{thm}\label{thmimpr}
Assume that $n$ is a prime number.  Let $G \subset \mathrm{GL}_n (C)$ be an imprimitive irreducible group such that $G/G^{0}$ is finite and cyclic. 
\begin{enumerate}
 \item If $\det(G) = C^*$ then $G^0\sim (C^*)^n$ and  
$G/G^0 \sim  \Z/ n \Z$.
 \item If $\det(G)=\Z/  \nu \Z$ then ${G^0\sim (C^*)^{n-1}}$ and
$G/G^0 \sim  \Z/ ns \Z$  where
 $$s = \left\lbrace \begin{array}{ll}
      \nu/n & \text{ if }  \nu \in n\N^* \\
     \nu & \text{ otherwise. }
 \end{array}\right.$$
\end{enumerate}
\end{thm}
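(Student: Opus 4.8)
The statement is announced as a consequence of Theorem~\ref{thm1} (which pins down the possible abstract isomorphism types of an imprimitive irreducible subgroup of $\GL_n(C)$ when $n$ is prime) and of Proposition~\ref{prop:detDistinguish} (which says that $\det(G)$ selects which type occurs). So the proof here should really just be the bookkeeping that assembles these two ingredients; the conceptual work is deferred. I would therefore begin by recalling the setup: since $G$ is imprimitive and irreducible with $n$ prime, by the discussion preceding the theorem we get a decomposition $C^n = V_1\oplus\cdots\oplus V_n$ into lines permuted transitively by $G$. Restricting this permutation action gives a surjective homomorphism $\pi\colon G\to \mathfrak{S}_n$ onto a transitive subgroup, and the kernel $G_{\mathcal D}$ is the diagonal subgroup (matrices preserving each $V_i$), a subgroup of $(C^*)^n$.

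\textbf{Key steps.} First I would invoke Theorem~\ref{thm1} to list the two possibilities for $(G^0, G/G^0)$, namely $G^0\sim (C^*)^n$ with $G/G^0\sim \Z/n\Z$, and $G^0\sim (C^*)^{n-1}$ with $G/G^0$ a cyclic group. In the first case $G^0 = G_{\mathcal D}$ is the full diagonal torus, so $\det$ restricted to $G^0$ is already surjective onto $C^*$, forcing $\det(G)=C^*$; conversely if $\det(G)=C^*$ then, since $C^*$ is connected, the image of $G^0$ under $\det$ is all of $C^*$, which (again by Theorem~\ref{thm1}, where the alternative has $\det(G^0)$ of dimension $0$ or rather strictly smaller image — here one uses that $G^0\subset G_{\mathcal D}\sim (C^*)^{n-1}$ lies in a proper subtorus on which $\det$ cannot be surjective... actually $\det$ can be surjective on $(C^*)^{n-1}$, so the real point is that in the second case $\det(G^0)$ is constrained differently) rules out the second case. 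This is the delicate point and I address it below. In the second case, $G^0\sim(C^*)^{n-1}$ means $G^0$ sits inside $G_{\mathcal D}$ as the subtorus cut out by $\det = 1$ up to finite index; then $G/G^0$ is cyclic of some order $N$, and one computes $N$ from the datum $\det(G)=\Z/\nu\Z$. For this, consider the exact sequence relating $G/G^0$, the image $\pi(G)\subset\mathfrak{S}_n$ (which, being transitive of prime degree and cyclic quotient — it is $G/(G\cap G_{\mathcal D})$, and $G\cap G_{\mathcal D} = G_{\mathcal D}$ here — must be the cyclic group $\Z/n\Z$), and the quotient $(G\cap G_{\mathcal D})/G^0$ which is the ``finite diagonal part''. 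Counting: $|G/G^0| = n\cdot |(G_{\mathcal D}\cap G)/G^0|$, and the determinant map identifies $(G_{\mathcal D}\cap G)/G^0$ together with the contribution of the cyclic permutation part to $\det$ (a cyclic permutation matrix has determinant $(-1)^{n-1}=1$ since $n$ is odd prime, or $-1$ if $n=2$; in either case its determinant is an $n$-th root... wait, $(-1)^{n-1}$: for odd $n$ this is $1$). So for $n$ odd the permutation part contributes trivially to $\det$, giving $\det(G)\cong (G_{\mathcal D}\cap G)/G^0\cong \Z/s\Z$ with $s=\nu$; but one must be careful when $n\mid \nu$, where the diagonal finite part can ``absorb'' a subgroup realizable already through scalars times permutations, producing instead $s=\nu/n$. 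This case split is exactly the one in the statement, and tracing through whether an element of $\det(G)$ of order divisible by $n$ does or does not lift to $G^0\cdot\langle\text{permutation}\rangle$ is the crux.

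\textbf{Main obstacle.} The hard part is the precise order computation in case (2): showing $|G/G^0| = ns$ with the stated value of $s$. This requires understanding exactly how the cyclic group $\pi(G)=\Z/n\Z$ generated by an $n$-cycle permutation matrix $P$ interacts, via determinant, with the finite diagonal quotient $(G_{\mathcal D}\cap G)/G^0$. The mechanism is: an element $g\in G$ decomposes (after choosing a splitting) as $d\cdot P^j$ with $d$ diagonal; $\det(g) = \det(d)\cdot\det(P)^j$; the image of $\det$ on $G$ is thus generated by $\det(G_{\mathcal D}\cap G)$ and $\det(P)$. Now $\det(P)=(-1)^{n-1}$, which is $1$ for odd prime $n$ and $-1$ for $n=2$. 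When $n$ is odd this term is trivial, but the subtlety is that $P$ itself, conjugating the diagonal torus by the $n$-cycle, relates $G^0$-cosets of diagonal matrices to each other, so the finite group $(G_{\mathcal D}\cap G)/G^0$ carries a $\Z/n\Z$-action and only its coinvariants/invariants are ``seen'' by $\det$; the determinant of a diagonal matrix is $\Z/n\Z$-invariant (it's symmetric in the entries), so $\det$ factors through the coinvariants, and a counting of $|(G_{\mathcal D}\cap G)/G^0|$ versus $|\det(G_{\mathcal D}\cap G)|$ gives a factor of $n$ precisely when the relevant extension is non-split in the way detected by $n\mid\nu$. I would make this rigorous by: (i) choosing coordinates so $G^0 = \{(t_1,\dots,t_n) : \prod t_i = 1\}^\circ$ inside $(C^*)^n$; (ii) showing $G_{\mathcal D}\cap G = G^0$ when $\det(G)=\Z/\nu\Z$ with $n\nmid\nu$, hence $|G/G^0|=n\cdot 1\cdot \nu$... and here I realize the factor structure needs the converse direction carefully — better to argue by comparing the two surjections $G\to\Z/n\Z$ (permutation) and $G\to\det(G)$ and showing the fiber product plus connectedness of $G^0$ forces $G/G^0\cong \Z/ns\Z$ cyclic with the stated $s$, using that $G/G^0$ is cyclic by hypothesis so it is determined by its order and the orders of its two canonical quotients $n$ and $|\det(G)|$, giving $|G/G^0| = \operatorname{lcm}$-type or $n\cdot|\det(G)|/\gcd$-type expression — and the $\gcd(n,\nu)$ is $n$ exactly when $n\mid\nu$, yielding $s=\nu/n$, and $1$ otherwise, yielding $s=\nu$. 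That last observation — \emph{$G/G^0$ cyclic is determined up to isomorphism by the orders of its quotients modulo $n$ and $\det$, and these combine via the arithmetic of $\gcd(n,\nu)$} — is, I expect, the cleanest route, and reduces the whole theorem to Theorem~\ref{thm1}, Proposition~\ref{prop:detDistinguish}, and an elementary lemma on cyclic groups.
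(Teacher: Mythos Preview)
Your opening is exactly right: the paper's proof of this theorem is nothing more than the sentence ``Theorem~\ref{thm1} and Proposition~\ref{prop:detDistinguish} imply the following result.'' All the content --- the dichotomy for $G^0$, the value of $|G/G^0|$, and how $\det(G)$ selects between the cases --- is already contained in those two results. So once you cite them, you are done. Everything in your ``Key steps'' and ``Main obstacle'' sections is an attempt to re-derive pieces of Theorem~\ref{thm1} and Proposition~\ref{prop:detDistinguish} from scratch, and that is where the confusion creeps in.

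Two specific points. First, your worry that ``$\det$ can be surjective on $(C^*)^{n-1}$'' is misplaced: in case~(ii) of Theorem~\ref{thm1}, the identity component is not an abstract $(C^*)^{n-1}$ but the concrete subtorus $G^0=\{\mathrm{Diag}(\alpha_1,\dots,\alpha_n):\prod\alpha_i=1\}$, on which $\det$ is identically~$1$. So $\det(G)=C^*$ forces $\det(G^0)=C^*$ (as you say, by connectedness of $C^*$ and finiteness of $G/G^0$), which immediately excludes case~(ii). Second, your closing idea --- that $G/G^0$ is cyclic, surjects onto $\Z/n\Z$ via the permutation action and onto $\det(G)\simeq\Z/\nu\Z$, hence has order $\mathrm{lcm}(n,\nu)$ --- is a clean alternative to the explicit arithmetic in Proposition~\ref{prop:detDistinguish}; and indeed $\mathrm{lcm}(n,\nu)=ns$ with the stated $s$ since $n$ is prime. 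But to make it rigorous you must also check that the combined map $G/G^0\to(\Z/n\Z)\times(\Z/\nu\Z)$ is \emph{injective}: its kernel consists of diagonal elements of determinant~$1$, which by the explicit description of $G^0$ in Theorem~\ref{thm1}(ii) is exactly $G^0$. Once you add that line, your lcm argument works and gives the same conclusion as the paper, just packaged differently.
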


At the end of this section, we explain how to compute a reduced form and a gauge transformation which gives this reduced form (see Proposition \ref{prop2}). \par 

\subsubsection{Imprimitive groups that may occur}
 
 Let us now describe the possible irreducible imprimitive groups that may occur as difference Galois groups when $n$ is prime.  Let us begin by an elementary lemma that will be used several times. Note that the assumption $n$ prime is not required for this lemma.
 
\begin{lem}
\label{lem:irred_primitive_G}
Let $G \subset \mathrm{GL}_n (C)$ be an irreducible group and let $H\triangleleft G$ be such that $G/H$ is finite and cyclic. Then, $H$ is not composed only of dilatations.
\end{lem}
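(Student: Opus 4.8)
The plan is to argue by contradiction. Suppose $H \triangleleft G$ with $G/H$ finite cyclic, and suppose $H$ consists entirely of dilatations (scalar matrices), so $H \subseteq C^{*}\cdot \mathrm{Id}_n$. The goal is to contradict the irreducibility of $G$. First I would pick $g \in G$ whose image generates the cyclic quotient $G/H$, so that $G = \langle H, g\rangle = \bigcup_{j} g^{j} H$; since $H$ is central (scalars commute with everything), every element of $G$ has the form $\lambda\, g^{j}$ with $\lambda \in C^{*}$. Consequently the action of $G$ on $C^{n}$, up to scalars, is governed entirely by the single matrix $g$: a subspace $V \subseteq C^{n}$ is $G$-invariant if and only if it is $g$-invariant.

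The key step is then to observe that $g$, being a single matrix over the algebraically closed field $C$, always has an eigenvector, hence a one-dimensional invariant subspace $V = C\cdot v$ with $g v = \mu v$ for some $\mu \in C^{*}$ (note $\mu \neq 0$ since $g \in \GL_n(C)$). For any element $\lambda g^{j} \in G$ we get $\lambda g^{j} v = \lambda \mu^{j} v \in V$, so $V$ is a nontrivial $G$-invariant subspace. If $n > 1$ this contradicts the irreducibility of $G$; if $n = 1$ the statement is vacuous (there are no irreducible subgroups of $\GL_1$ that fail to be "only dilatations" in the degenerate sense — one may simply note the hypothesis forces nothing, or exclude $n=1$ as the lemma is only invoked for $n$ a prime, hence $n \geq 2$). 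This completes the contradiction and proves that $H$ cannot consist only of dilatations.

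The only genuine subtlety — and it is mild — is making sure that "$G/H$ cyclic and $H$ central" really does force every element of $G$ into the normal form $\lambda g^{j}$: this uses that $H$ scalar implies $H \subseteq Z(G)$, so the extension $1 \to H \to G \to G/H \to 1$ has central kernel and cyclic quotient, and one can lift a generator and write $G = \bigcup_{j=0}^{m-1} g^{j} H$ as a set (this is just the definition of the quotient, no splitting needed). After that the argument is the elementary fact that a single invertible matrix over an algebraically closed field has an eigenvector. I expect no real obstacle here; the lemma is a short linear-algebra observation and the main care is only in the bookkeeping of the cyclic quotient and in noting that the dilatation subgroup, being scalar, is automatically central.
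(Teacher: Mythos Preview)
Your proposal is correct and follows essentially the same route as the paper's proof: assume $H$ consists of dilatations, lift a generator $g$ of the cyclic quotient so that $G=\bigcup_j g^jH$, and use that $g$ has an eigenvector over the algebraically closed field $C$ to produce a proper $G$-invariant line, contradicting irreducibility. The paper phrases the last step as ``$\mathscr{F}$ is triangularisable, hence all of $G$ is simultaneously triangularisable,'' but this is the same observation you make with the eigenvector.
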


\begin{proof}
By contradiction, assume that $H$ is composed only of dilatations. Since $G/H$ is finite and cyclic,  there exists a matrix $\mathscr{F}$ such that $G=H\cup H \mathscr{F} \cup\dots  \cup H \mathscr{F}^{m-1}$ for some positive integer $m$.  The matrix $\mathscr{F}$ is triangularisable and since $H$ is formed by dilatations,  it implies that there is a common basis of triangularisation of every elements of $G$, which contradicts the fact that $G$ is irreducible.
\end{proof}

For $(\alpha_{1},\dots, \alpha_n)\in (C^*)^n$, let $$E_n (\alpha_{1},\dots, \alpha_n):=\begin{pmatrix}
& \alpha_1&&\\ 
 &  &\ddots & \\
&&&\alpha_{n-1}\\
\alpha_n&&&
\end{pmatrix}=
\mathrm{Diag} (\alpha_{1},\dots, \alpha_n)E_n, $$
where
$E_n=\begin{pmatrix}
& 1&&\\ 
 &  &\ddots & \\
&&&1\\
1&&&
\end{pmatrix}$ is the companion matrix with last row  $(1,0,\dots,0)$. To lighten the notations, we will denote it $E$. For $(\alpha_{1},\dots, \alpha_n)\in (C^*)^n$, let $\bar{\alpha}:=(\alpha_{1},\dots, \alpha_n)$ and let $\Pi\bar{\alpha} :=\prod_{i=1}^{n}\alpha_{i}$.  Note that $\left(E_n (\alpha_{1},\dots, \alpha_n)\right)^n=(\Pi \bar{\alpha} )\mathrm{Id}_n$. Let $\mathcal{M}_\ell$ be the set of invertible matrices of the form $\mathrm{Diag} (\alpha_{1},\dots, \alpha_n)E^{\ell}$.  Note that $\mathcal{M}_0$ is composed of diagonal matrices. The following theorem gives information on the possible groups that may occur as Galois groups.

\begin{thm}\label{thm1}
Assume that $n$ is a prime number.  Let $G \subset \mathrm{GL}_n (C)$ be an imprimitive irreducible group such that $G/G^{0}$ is finite and cyclic. Then, up to a conjugation,  one of the following holds:
\begin{enumerate}
 \item\label{cas1} $G= \displaystyle \bigcup_{\ell=0}^{n-1} \mathcal{M}_{\ell}$. In that case $G^0\sim (C^*)^n$,  
$G/G^0 \sim  \Z/ n \Z$  and $\det(G)=C^*$.
 \item\label{cas2} There exists a positive integer $s$ such that 
 $$G\cap \mathcal{M}_0= \left\{\mathrm{Diag}(\alpha_{1},\dots, \alpha_n)\mid  (\Pi\overline{\alpha})^{s}=1\right\}$$ and there exists $\mathscr{M}$ of the form $E_n\left(\alpha_{1},\dots, \alpha_n\right)$, where $\Pi \overline{\alpha}=e^{2\mathbf{i}\pi k/(ns)}$ with $\gcd (k,s)=1$, such that $G/G^0$ is generated by $\mathscr{M}$.  In that case,  we find  that 
 ${G^0=  \left\{\mathrm{Diag}(\alpha_{1},\dots, \alpha_n)\mid \Pi\overline{\alpha}=1\right\}\sim (C^*)^{n-1}}$, 
$G/G^0 \sim  \Z/ ns \Z$  and 
 \begin{enumerate}
\item\label{cas21}   if $n=2$ then  $\det(G)=\Z/  (2s/\gcd(k+s,2)) \Z$;
\item\label{cas22}  if $n> 2$ then   $\det(G)=\Z/  (ns/\gcd(k,n)) \Z$.
 \end{enumerate}
\end{enumerate}
\end{thm}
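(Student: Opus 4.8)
The plan is to analyze the structure of an imprimitive irreducible $G\subset\GL_n(C)$ with $n$ prime by exploiting the decomposition $C^n=V_1\oplus\cdots\oplus V_n$ into lines that $G$ permutes. First I would fix such a decomposition and a basis adapted to it, so that every element of $G$ has the shape $\mathrm{Diag}(\alpha_1,\dots,\alpha_n)P_\sigma$ for a permutation matrix $P_\sigma$ (here the fact that $n$ is prime and $G$ irreducible guarantees all $V_i$ are lines, as explained before the statement). The assignment $g\mapsto\sigma$ is a homomorphism $G\to S_n$ whose image $\Sigma$ is a transitive subgroup of $S_n$ (transitivity is irreducibility again). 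The kernel is exactly $G\cap\mathcal{M}_0$, the diagonal part $\Gdiag$ in this basis. A key reduction: since $G/G^0$ is finite and cyclic, the image $\Sigma$ is a quotient of a cyclic-by-(something) group; more usefully, because $n$ is prime, any transitive subgroup of $S_n$ containing a cyclic normal subgroup acting transitively must contain the $n$-cycle, and I would show $\Sigma$ itself is generated by an $n$-cycle, i.e. $\Sigma\cong\Z/n\Z$ — the point being that a transitive subgroup of $S_n$ with $n$ prime has order divisible by $n$, hence contains an $n$-cycle by Cauchy, and then the constraint that $G/G^0$ is cyclic forces $\Sigma$ to be cyclic of order exactly $n$ (a larger transitive group like a Frobenius group would give non-cyclic $G/G^0$ after intersecting with the diagonal torus). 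Conjugating by a suitable diagonal matrix that ``absorbs'' the permutation data, I can arrange that the $n$-cycle is realized by a matrix of the form $E_n(\alpha_1,\dots,\alpha_n)$, so $G\subset\bigcup_{\ell=0}^{n-1}\mathcal{M}_\ell$.

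Next I would pin down $\Gdiag=G\cap\mathcal{M}_0$, a closed subgroup of the diagonal torus $(C^*)^n$. Since $G$ is a union of cosets $\Gdiag E_n(\cdot)^\ell$, conjugation by $E_n(1,\dots,1)=E$ cyclically permutes the coordinates, and $\Gdiag$ must be stable under this cyclic coordinate permutation. A cyclic-permutation-stable closed subgroup of $(C^*)^n$ whose quotient in $G$ keeps $G/G^0$ cyclic is constrained to be either all of $(C^*)^n$ — this is case \eqref{cas1} — or of the form $\{\mathrm{Diag}(\alpha_1,\dots,\alpha_n):(\Pi\overline\alpha)^s=1\}$ for some positive integer $s$; here I would use that the identity component of such a subgroup is either the full torus or the codimension-one subtorus $\Pi\overline\alpha=1$ (the only cyclic-stable connected subtori of codimension $\le 1$), and the component group is then cyclic of order $s$, generated by a central scalar $\zeta\,\mathrm{Id}$ with $\zeta$ a primitive $s$-th root of unity. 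Lemma \ref{lem:irred_primitive_G} rules out the degenerate possibility that $\Gdiag$ consists only of dilatations, keeping us honest about irreducibility. In case \eqref{cas2}, using $\left(E_n(\alpha_1,\dots,\alpha_n)\right)^n=(\Pi\overline\alpha)\mathrm{Id}_n$, the $n$-th power of the chosen generator $\mathscr M$ lands in the central scalars of $\Gdiag$, and comparing with the component structure forces $\Pi\overline\alpha$ to be a primitive $(ns)$-th root of unity, say $e^{2\mathbf i\pi k/(ns)}$ with $\gcd(k,s)=1$; then $|G/G^0|=ns$ and $G/G^0\cong\Z/ns\Z$ generated by the class of $\mathscr M$.

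Finally I would compute $\det(G)$ in each case. In case \eqref{cas1}, $\det\mathcal{M}_\ell$ ranges over all of $C^*$ already from $\mathcal{M}_0$, so $\det(G)=C^*$. In case \eqref{cas2}, on $G^0$ the determinant is trivial (since $\Pi\overline\alpha=1$ there), so $\det(G)$ is the cyclic group generated by $\det\mathscr M=(-1)^{n-1}\Pi\overline\alpha$ — the sign is the signature of the $n$-cycle. For $n>2$ the $n$-cycle is even, so $\det\mathscr M=e^{2\mathbf i\pi k/(ns)}$, a primitive $(ns/\gcd(k,ns))$-th root of unity; since $\gcd(k,s)=1$ one has $\gcd(k,ns)=\gcd(k,n)$, giving $\det(G)\cong\Z/(ns/\gcd(k,n))\Z$, which is case \eqref{cas22}. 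For $n=2$ the transposition is odd, so $\det\mathscr M=-\Pi\overline\alpha=e^{\mathbf i\pi}e^{2\mathbf i\pi k/(2s)}=e^{2\mathbf i\pi(k+s)/(2s)}$, whose order is $2s/\gcd(k+s,2s)=2s/\gcd(k+s,2)$ (using $\gcd(k+s,s)=\gcd(k,s)=1$), which is case \eqref{cas21}. I expect the main obstacle to be the group-theoretic step identifying $\Sigma$ as exactly $\Z/n\Z$ and correctly characterizing the cyclic-coordinate-permutation-stable closed subgroups of $(C^*)^n$ that can appear as $\Gdiag$ while keeping $G/G^0$ cyclic — the torus combinatorics (reducing to connected subtori stable under a cyclic permutation of coordinates and their finite extensions) is where the delicate case analysis lives; the determinant computations at the end are then routine root-of-unity bookkeeping, with the only subtlety being the parity of the $n$-cycle forcing the separate treatment of $n=2$.
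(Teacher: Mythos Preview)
Your plan is sound and reaches the same case split as the paper, but you identify $\Gdiag=G\cap\mathcal M_0$ by a different route. The paper does not classify cyclic-stable subtori abstractly; instead it picks a nontrivial defining character $(s_1,\dots,s_n)$ of $\Gdiag$, observes that conjugation by an element of $\mathcal M_1\cap G$ cyclically shifts the exponents, forms the circulant matrix $B=\sum_\ell s_{\ell+1}E^\ell$, and uses the irreducibility of the cyclotomic polynomial $\Phi_n$ over $\Z$ together with Lemma~\ref{lem:irred_primitive_G} (via a somewhat delicate argument separating moduli and arguments of the $\alpha_j$) to force $s_1=\cdots=s_n$. Your approach---classify the connected cyclic-stable subtori of $(C^*)^n$ through the $\Q[\Z/n\Z]$-module structure of the character lattice, then invoke Lemma~\ref{lem:irred_primitive_G} to discard those contained in the scalars---encodes the same arithmetic (the decomposition $\Q[\Z/n\Z]\cong\Q\times\Q(\zeta_n)$ for $n$ prime) but is cleaner once spelled out. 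Your argument that the permutation image $\Sigma$ is cyclic of order $n$, as a quotient of the cyclic group $G/G^0$, is also a step the paper simply asserts. Two small corrections to your plan: Lemma~\ref{lem:irred_primitive_G} should be applied with $H=G^0=\Gdiag^0$ rather than $\Gdiag$, since it is the identity component you must prevent from lying in the scalars in order to rule out the codimension~$\ge 2$ subtori; and $\Pi\overline\alpha$ need only satisfy $\gcd(k,s)=1$, so it is not in general a \emph{primitive} $(ns)$-th root of unity---this does not affect your determinant computations, which are correct.
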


\begin{proof}
Up to a conjugation, $G$ is a subgroup of $ \displaystyle \bigcup_{\ell=1}^{n} \mathcal{M}_{\ell}$. 
Since $G$ is irreducible, it contains a nondiagonal element thus there exists $0<\ell<n$ such that $G$ contains an element of $\mathcal{M}_{\ell}$.  Using that $G$ is a group, $G$ contains an element of $\mathcal{M}_{k\ell}$ for all $k\in\mathbb{Z}$. Since $n$ is prime,   for all $0\leq k'<n$, there exists an integer $k$ such that $k\ell= k' \mod n$. This shows that $G$ contains an element of $\mathcal{M}_{k'}$, for all $0\leq k' <n$. 
Let $\mathcal{D}$ be the diagonal elements of $G$.  It induces an algebraic subgroup of $n$ copies of the multiplicative group.  

Assume first that $\mathcal{D}$ is equal to the group of invertible diagonal matrices.  Since the latter is connected,   $G^0=\mathcal{D}$. 
Thus, $G$ contains all the invertible diagonal elements. Since $G$ contains an element of $\mathcal{M}_{k'}$ for all $0\leq k' <n$ and contains $\mathcal{M}_0$ it follows that $G= \displaystyle \bigcup_{\ell=0}^{n-1} \mathcal{M}_{\ell}$.  This is the first case. 

\medskip
 
Assume now that $\mathcal{D}$ is strictly included in the group of invertible diagonal matrices. Since $G\subset  \displaystyle \bigcup_{\ell=1}^{n} \mathcal{M}_{\ell}$, we have $G^0\subset \mathcal{D}$. There exists $\bn:=(s_1,\dots s_n)\in\mathbb{Z}^n$, where the $s_i$ are not all zero, such that the diagonal matrices $\mathrm{Diag}(\alpha_{1},\dots, \alpha_n)$ in $G$ satisfy $\prod\limits_{j=1}^n \alpha_j^{s_j}=1$.   We have $\sum_{j=1}^{n}s_j \ln(|\alpha_j|)=0$. 

Consider a matrix $P:=E_n\left(\beta_1,\ldots, \beta_n\right)\in \mathcal{M}_1\cap G\, .$
We have 
$$P .\text{Diag}\left(\alpha_n,\alpha_1,\alpha_{2},\ldots, \alpha_{n-1} \right) = \text{Diag}\left(\alpha_1,\alpha_{2},\ldots, \alpha_{n} \right).P\, .$$
Therefore, if $\text{Diag}\left(\alpha_1,\alpha_{2},\ldots, \alpha_{n} \right)\in G$ then $\text{Diag}\left(\alpha_n,\alpha_1,\alpha_{2},\ldots, \alpha_{n-1} \right)\in G$.
Iterating this computation,  we obtain that the nonzero vector $\mathfrak{X}=( \ln(|\alpha_{1}|),\dots,  \ln(|\alpha_{n}|))^t$ is a solution of 
$$B\mathfrak{X}=0, \quad  \hbox{ where} \quad B=\begin{pmatrix}
s_1&s_2 &\cdots &s_n\\ 
s_n &s_1  &\ddots &\vdots \\
\vdots &\ddots & \ddots & s_2\\
s_{2}&\cdots & s_n &s_{1}
\end{pmatrix}.$$
Note that $B= \sum_{\ell=0}^{n-1} s_{\ell+1} E^{\ell}$ is a circulant matrix.  Since   $E^n$ is the identity, $E$ is diagonalisable with eigenvectors $\mathfrak{X}_k=(1, e^{2\mathbf{i}k\pi/n}, \dots, e^{2\mathbf{i}k(n-1)\pi/n} )^t$, $0\leq k\leq n-1$, and its eigenvalues are the $n$th roots of unity:
$$
E\mathfrak{X}_k = e^{2\mathbf{i}k\pi/n}\mathfrak{X}_k\, .
$$
These eigenvectors are also eigenvectors for every polynomial in $E$,  in particular they are eigenvectors of $B$.
Since $E$ is diagonalisable, there exists $P\in\GL_n(C)$  such that $E=PD_0 P^{-1}$, where $D_0 =\mathrm{Diag}(1,e^{2\mathbf{i}\pi/n},\dots,e^{2\mathbf{i}(n-1)\pi/n})$ is a diagonal matrix. 
Since $B\mathfrak{X}=0$ and $P^{-1}BP=\sum_{\ell=0}^{n-1} s_{\ell+1} D_0^{\ell}$, the diagonal matrix $\sum_{\ell=0}^{n-1} s_{\ell+1} D_0^{\ell}$ has a nontrivial kernel, that is, one of its diagonal entries is zero: there exists $0\leq k_0 \leq n-1$ such that  $\sum_{\ell=0}^{n-1} s_{\ell+1} e^{2\mathbf{i}\ell k_0\pi/n}=0$. 
The $n$th roots of unity satisfy $z^n=1$ and we have $z^n -1=(z-1)\Phi_n(z)$ where $\Phi_n(z):=1+z+\dots +z^{n-1}$.  Because $n$ is prime, the cyclotomic polynomial $\Phi_n$ is irreducible on $\mathbb{Z}$.   If $k_0>0$, then we must have $s:=s_1=\dots=s_n\neq 0$. Let us prove by contradiction that we work in the case $k_0\neq 0$.  

By contradiction, assume that  $k_0=0$.  Then,  $\sum_{\ell=0}^{n-1} s_{\ell+1}=0$.  Recall that the $s_i$ are not all zero.  In particular,  we cannot be in the situation where $s_1=\dots=s_n$. From what precedes,  this implies that for all  $k\neq0$  we have $\mathfrak{X}_k\notin\ker(B)$.  Thus, $\ker(B)=\mathrm{Vect}(\mathfrak{X}_0)$ and in particular $\mathfrak{X}\in \mathrm{Vect}(\mathfrak{X}_0)$, that is $\ln(|\alpha_{1}|)=\dots=\ln(|\alpha_{n}|)$. We obtain 
\begin{equation}
\label{eq:mod}
|\alpha_1|=\ldots = |\alpha_n|\, .    
\end{equation}  Let $g:=\gcd(s_1,\ldots,s_n)$, $s_i':=\frac{s_i}{g}$ and $\beta_i := \alpha_i^g$ for $i\in \lbrace 1,\ldots, n\rbrace$. Since $\prod\limits_{j=1}^n \alpha_j^{s_j}=1$, we have $\prod\limits_{j=1}^n \beta_j^{s_j'}=1$. Thus,  there exists $\ell_0\in\mathbb Z$ such that $\sum_{j=1}^{n}s_j' \arg(\beta_j) = 2\pi \ell_0$. Moreover, since $\gcd(s_1',\ldots,s_n') = 1$, by the Bézout's identity, there exist $u_1,\ldots, u_n\in\mathbb{Z}$ such that  $\sum_{j=1}^{n}u_js_j' = 1$.  We can replace $\arg(\beta_j)$ with $\arg(\beta_j) - 2\pi u_j\ell_0$ (new choices for the determinations of arguments), also called $\arg(\beta_j)$ to lighten notations. With these new choices, $\sum_{j=1}^{n}s_j' \arg(\beta_j) = 0$. Thus $\mathfrak{Y}:=(\arg(\beta_1),\ldots,\arg(\beta_n))^t$ satisfies $B\mathfrak{Y}=0$. As before (with $\mathfrak Y$ instead of $\mathfrak X$), we obtain 
\begin{equation}
\label{eq:arg}
\arg(\beta_1)=\ldots=\arg(\beta_n)\, .    
\end{equation} 
From \eqref{eq:mod} and \eqref{eq:arg}, we have
$
\alpha_1^g = \ldots = \alpha_n^g \, .
$
 Thus, in that case, there exists $g\in \mathbb{N}^*$ such that $\mathcal{D}\subset \lbrace \mathrm{Diag}(\alpha_1,\ldots,\alpha_n) \in\GL_n\left( C\right)\mid \alpha_1^g = \ldots = \alpha_n^g\rbrace$. Let us prove that  the connected component of the identity of $\mathcal{D}$ is composed by dilatations.  Indeed,  with the equality $\alpha_1^g-\alpha_j^g=(\alpha_1-\alpha_j)(\alpha_1^{g-1}+\alpha_1^{g-2}\alpha_j + ... + \alpha_1 \alpha_j^{g-2}+ \alpha_j^{g-1})=0$, we find that $\lbrace \mathrm{Diag}(\alpha_1,\ldots,\alpha_n) \in\GL_n\left( C\right)\mid \alpha_1^g = \ldots = \alpha_n^g\rbrace$ contains the closed set $\alpha_1=\ldots = \alpha_n$. The latter is composed by dilatations matrices and contains  the connected component of the identity. This implies that the elements of $G^0$ are dilatations, which is a contradiction because of Lemma \ref{lem:irred_primitive_G} (with $H:=G^0$ in this lemma).  Therefore, $k_0>0$ so we must have $s:=s_1=\dots=s_n\neq 0$.
\color{black}

We proved that $\mathcal{D}\subset \mathcal{D}_s:=\lbrace \mathrm{Diag}(\alpha_{1},\dots, \alpha_n)\mid (\Pi \overline{\alpha})^{s}=1\rbrace$ and, since $\mathcal{D}_{s}=\mathcal{D}_{-s}$, we can assume that $s$ is positive. If $\mathcal{D}$ is a strict subgroup of $\mathcal{D}_s$, the same reasoning implies that $\mathcal{D}\subset \mathcal{D}_{s'}$ for some $s'$ that divides $s$. So, up to taking a smaller positive integer $s$, we may assume that $\mathcal{D}= \mathcal{D}_s$.
In this case, $G^0=\mathcal{D}_1$.  
 
Let $\mathscr{F}:=\mathrm{Diag} (\alpha_{1},\dots, \alpha_n)E^{\ell}\in \mathcal{M}_{\ell}\cap G$ be a non diagonal element of $G$ which generates the finite group $G/G^0$ (we can choose in particular $0<\ell<n$). Since $n$ is prime, $n$ is the smallest integer such that $\mathscr{F}^n$ is diagonal. We have $\mathscr{F}^n = \mathrm{Diag} (\Pi \baralp ,\dots, \Pi \baralp)$ thus it satisfies $(\Pi \baralp)^{sn}=1$. We have $\Pi \baralp=e^{2\mathbf{i}\pi k/sn}$ for some integer $0\leq k<sn$ and $\mathscr{F}^n$ has to generate the diagonal elements of $G$. This shows that $(\Pi \baralp)^n$ has to be a primitive $s$th root of unity, that is, $k$ has to be coprime with $s$. Since $G$ is defined up to an isomorphism and since $\mathscr{F}$ is conjugate to $\mathrm{Diag} (\alpha_{\mu(1)},\dots, \alpha_{\mu(n)})E$ where $\mu\in\mathfrak{S}_n$ is a permutation, we can assume (up to renumbering the $\alpha_i$'s, which can be done since we just have conditions on $\Pi \baralp$) that $\mathscr{F} = \mathrm{Diag} (\alpha_{1},\dots, \alpha_n)E = E_n\left(\alpha_{1},\dots, \alpha_n\right)$.\par 
The determinant of the diagonal elements is a power of $e^{2\mathbf{i}\pi/s}$. When $n>2$, $n$ is odd and the determinant of $\mathscr{F}$ is $e^{2\mathbf{i}\pi k/sn}$. When $n=2$ the determinant of $\mathscr{F}$ is  $-e^{2\mathbf{i}\pi k/2s}=e^{2\mathbf{i}\pi (k+s)/2s}$.   
The results on the determinant  follow.
\end{proof}

\begin{rem}
When $n=2$, we recover \cite[Lemma 4.8]{hendriks1998algorithm}.
\end{rem}

Let us now see how $\det(G)$ helps us to distinguish the cases in Theorem \ref{thm1} and compute~$s$. We will explain how to compute $\det(G)$ in Section \ref{secorder1}. 

\begin{prop}
\label{prop:detDistinguish}
Assume that $n$ is prime.  Let $G \subset \mathrm{GL}_n (C)$ be an imprimitive irreducible group such that $G/G^{0}$ is finite and cyclic.  The following statements holds.
\begin{itemize}
\item  $\det(G)=C^*$ if and only if  $G$ is the group  in the case \ref{cas1} of Theorem \ref{thm1}.   
\item If  $\det(G)=\Z/  \nu \Z$, with $\nu \in n\N^*$,  then $s=\nu/n$. where $s$ is defined in  Theorem~\ref{thm1}.
\item Otherwise,  $s=\nu$. 
\end{itemize}
\end{prop}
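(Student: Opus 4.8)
The plan is to read off Proposition \ref{prop:detDistinguish} directly from the determinant computations already contained in the statement of Theorem \ref{thm1}, so the proof is essentially a case analysis matching the value of $\det(G)$ against the two cases there. First I would recall that by Theorem \ref{thm1}, up to conjugation $G$ is either the group in case \ref{cas1}, for which $\det(G) = C^*$, or one of the groups in case \ref{cas2}, for which $\det(G)$ is a finite cyclic group $\Z/\nu\Z$ with $\nu$ given explicitly in terms of $n$, $s$ and $k$. Since $C^*$ is not finite, $\det(G) = C^*$ forces us into case \ref{cas1}, and conversely case \ref{cas1} gives $\det(G) = C^*$; this establishes the first bullet. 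For the remaining two bullets we are in case \ref{cas2}, where $G^0 \sim (C^*)^{n-1}$, $G/G^0 \sim \Z/ns\Z$, and $\det(G) = \Z/\nu\Z$ with $\nu = ns/\gcd(k,n)$ when $n > 2$ and $\nu = 2s/\gcd(k+s,2)$ when $n = 2$, where $\gcd(k,s) = 1$.

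Next I would do the arithmetic to recover $s$ from $\nu$. Consider first $n > 2$, so $n$ is an odd prime. Then $\gcd(k,n) \in \{1, n\}$. If $\gcd(k,n) = 1$ then $\nu = ns$, which lies in $n\N^*$, and $s = \nu/n$. If $\gcd(k,n) = n$, i.e. $n \mid k$, then $\nu = s$; here, since $\gcd(k,s) = 1$ and $n \mid k$, we get $\gcd(n,s) = 1$, hence $n \nmid s = \nu$, so $\nu \notin n\N^*$ and $s = \nu$. Thus in the case $n > 2$: if $\nu \in n\N^*$ then $s = \nu/n$, and otherwise $s = \nu$, which is exactly the claimed dichotomy.

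For $n = 2$ the analysis is similar but one must track the parity of $k + s$. Here $\nu = 2s/\gcd(k+s,2)$, and since $\gcd(k,s) = 1$ the pair $(k,s)$ cannot both be even. If $k + s$ is odd then $\gcd(k+s,2) = 1$ and $\nu = 2s \in 2\N^*$, giving $s = \nu/2$. If $k + s$ is even then, as $k,s$ are not both even, they are both odd, so $s$ is odd and $\gcd(k+s,2) = 2$, whence $\nu = s$ is odd, so $\nu \notin 2\N^*$ and $s = \nu$. Again this matches: $\nu \in n\N^*$ yields $s = \nu/n$ and otherwise $s = \nu$. Collecting the two cases proves the second and third bullets.

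I do not expect a genuine obstacle here: the content is all packaged into Theorem \ref{thm1}, and the only subtlety is making sure the coprimality condition $\gcd(k,s) = 1$ is invoked correctly so that $n \mid k$ implies $n \nmid s$ (for $n$ odd) and $k + s$ even implies $s$ odd (for $n = 2$); without $\gcd(k,s)=1$ the value $\nu$ would not determine $s$ unambiguously. I would also note at the start that, as emphasized in the excerpt, $\det(G)$ is itself the Galois group of an order one equation and is therefore computable (by the methods of Section \ref{secorder1}), which is what makes this proposition useful as a decision procedure for separating the cases of Theorem \ref{thm1}; but this remark is not needed for the proof itself.
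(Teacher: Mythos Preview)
Your proof is correct and follows essentially the same approach as the paper's: both read off the determinant from the two cases of Theorem~\ref{thm1} and then invert the formula $\nu = ns/\gcd(k,n)$ (resp.\ $\nu = 2s/\gcd(k+s,2)$) using that $n$ is prime and $\gcd(k,s)=1$. Your treatment of the $n=2$ case is simply more explicit than the paper's, which just observes that $\gcd(k+s,s)=1$ and says the same arithmetic applies.
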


\begin{proof}
If $\det(G)=C^*$ we are clearly in the case case \ref{cas1} of Theorem \ref{thm1}.  The converse is also true.  
 \par 
 Assume that $n\neq 2$, thus by Theorem \ref{thm1},  $\det(G)=\Z/  (sn/\gcd(k,n)) \Z$.  We know the integers $n$, $\nu:=(sn/\gcd(k,n))$  and we want to compute $s$. 
Note that, since $n$ is prime,  $\nu=sn/\gcd(k,n)$ is either $sn$ or $s$.  Let us prove that $\nu=s$ if and only if  $\gcd(\nu,n)=1$.    If $\nu=s$ then $\gcd(k,n)=n$ and, since  $k$ is coprime with $s$,  this implies $\gcd(n,s)=1$, that is $\gcd(\nu,n)=1$. Obviously, if $\nu=sn$ then $\gcd(\nu,n)=n$.  
Therefore, either $\gcd(\nu,n)=1$ and $s=\nu$ or $\gcd(\nu,n)=n$ and $s=\nu/n$. \par 
In the case $n=2$, $k$ is replaced by $k+s$.  Note that $k+s$ is coprime with $s$ and the same conclusion holds.  
\end{proof}

\subsubsection{How to compute a reduced form?}

First, we prove a result which is useful in what follows.

\begin{prop}
\label{prop:irred_primitive}
    Let $n$ be a prime number and let $H\subset\GL_n(C)$ be an irreducible group. Assume that $K$ is not composed only of dilatations and $K\triangleleft H$. If $H$ is a primitive group, then $K$ is an irreducible group.
\end{prop}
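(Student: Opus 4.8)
The statement to prove is: if $n$ is prime, $H \subset \GL_n(C)$ is irreducible and primitive, $K \triangleleft H$ and $K$ is not composed only of dilatations, then $K$ is irreducible. I would argue by contradiction: suppose $K$ is reducible. The idea is to exploit Clifford-theory-type reasoning. Let $V \subsetneq C^n$ be a nonzero $K$-invariant subspace, chosen of minimal dimension among nonzero $K$-invariant subspaces (so $V$ is an irreducible $K$-submodule of $C^n$). Since $K$ is normal in $H$, for each $\sigma \in H$ the subspace $\sigma(V)$ is again a $K$-invariant subspace (because $K\sigma(V) = \sigma(\sigma^{-1}K\sigma)(V) = \sigma K(V) = \sigma(V)$), and it is again $K$-irreducible. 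The sum $W := \sum_{\sigma \in H} \sigma(V)$ is then an $H$-invariant subspace, hence $W = C^n$ by irreducibility of $H$. Therefore $C^n$ is a sum of the $K$-irreducible submodules $\sigma(V)$; extracting a minimal subfamily whose sum is direct, we get a decomposition $C^n = W_1 \oplus \dots \oplus W_k$ into $K$-irreducible submodules, each of the form $\sigma_i(V)$, hence all of the same dimension $d := \dim V$, so $d \mid n$.

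\textbf{Using primality and primitivity.} Since $n$ is prime and $d = \dim V < n$ (as $V$ is a proper $K$-invariant subspace — here is where we use that $K$ is reducible, i.e. $V \neq C^n$), we get $d = 1$, so each $W_i$ is a line and $k = n$. Now I claim $H$ permutes the lines $W_1, \dots, W_n$, which contradicts primitivity of $H$. Indeed, fix $i$ and $\sigma \in H$; then $\sigma(W_i)$ is again a $K$-irreducible submodule, i.e. a $K$-invariant line. One must check $\sigma(W_i)$ is one of the $W_j$; the clean way is isotypic decomposition: group the $W_j$ into the isotypic components of the $K$-module $C^n$, say $C^n = U_1 \oplus \dots \oplus U_r$ where $U_t$ is the sum of all $W_j$ isomorphic to a fixed one-dimensional character $\chi_t$ of $K$. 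Then $H$ permutes the isotypic components $U_t$ (a normal-subgroup action permutes isotypic components — standard), so it suffices to treat a single isotypic component: within $U_t$, all the $W_j$ afford the same character $\chi_t$ of $K$, and any $K$-stable line in $U_t$ also affords $\chi_t$. If $\dim U_t \geq 2$, then $U_t$ contains a continuum of $K$-stable lines, and we need another argument to see $H$ cannot mix them arbitrarily without contradicting primitivity.

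\textbf{Handling the isotypic components.} The cleanest route is: since $n$ is prime, the partition $\{U_1,\dots,U_r\}$ into isotypic components is either the single block $C^n$ (i.e. $K$ acts on $C^n$ by a single character $\chi$, hence by dilatations — contradicting the hypothesis that $K$ is not only dilatations) or all blocks have dimension $1$ (if one block had dimension $\geq 2$, then since $H$ permutes blocks transitively — because $W := C^n$ is $H$-irreducible and the sum of blocks in any $H$-orbit is $H$-stable — all blocks have the same dimension $d'$ with $d' \mid n$, forcing $d' = n$ and $r = 1$, back to the dilatation case, or $d' = 1$). So in the only surviving case every isotypic component $U_t$ is a line, these lines $U_1,\dots,U_n$ are permuted by $H$, and this gives a nontrivial decomposition $C^n = U_1 \oplus \dots \oplus U_n$ permuted by $H$: $H$ is imprimitive, contradiction. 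Hence $K$ is irreducible.

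\textbf{Main obstacle.} The delicate point is the last paragraph: ruling out an isotypic component of dimension $\geq 2$, i.e. making precise that primitivity of $H$ together with normality of $K$ and primality of $n$ forces the $K$-isotypic decomposition of $C^n$ to consist of lines (unless $K$ is dilatations-only). Everything else — that $\sigma(V)$ is $K$-invariant for $\sigma\in H$, that the $\sigma(V)$ span $C^n$, that they can be chosen to give a direct-sum decomposition into modules of equal dimension $d\mid n$ — is routine once we invoke irreducibility of $H$ and primality of $n$. I would present the proof by first setting up the decomposition into $K$-irreducibles of common dimension $d\mid n$, then splitting into the cases $d=1$ and $d=n$, and within $d=1$ passing to isotypic components and invoking primitivity; the hypothesis ``$K$ not composed only of dilatations'' is exactly what kills the degenerate case where $K$ acts by a single character.
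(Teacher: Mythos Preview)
Your proof is correct and follows essentially the same approach as the paper's: both argue by contradiction, decompose $C^n$ into $K$-irreducible submodules of common dimension $d\mid n$, use primality to force $d=1$, then pass to the isotypic decomposition (what the paper calls ``common eigenspaces of the matrices in $K$''), show these components are equidimensional and permuted by $H$, and conclude by the dilatation/primitivity dichotomy. The only difference is presentational: you invoke Clifford-theoretic language (isotypic components, transitivity of the $H$-action on them via irreducibility) where the paper argues more concretely (eigenspaces of maximal dimension, their sum being $H$-stable and proper), but the underlying argument is the same.
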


\begin{proof}
Let us prove by contradiction the irreducibility of $K$. If $K$ is not irreducible, there exists a vector space $V\subsetneq C^n$ of minimal dimension $\nu>0$ such that $K(V)=V$.  
We denote by $V_1,\dots, V_k$  vector subspaces of $C^n$ in direct sum such that $\nu=\dim(V_j)$ and $K(V_j)=V_j$.  We then have $V_1\oplus \dots \oplus V_k\subset C^n$. Thereofore $\nu k< n$ and $k$ is bounded. Then, without loss of generalities we may assume that $k$ is maximal, meaning that for all $V'\subset C^n$, with $\dim (V')=\nu$ and $K(V')=V'$, $V'$ is not in direct sum with $V_1\oplus \dots \oplus V_k$.

We claim that  $V_1\oplus \dots \oplus V_k =C^n$.  To the contrary, assume that $V_1\oplus \dots \oplus V_k \subsetneq C^n$. By irreducibility of $H$, there exist $\mathscr{H}\in H$ and $\ell\in\lbrace 1,\ldots,k\rbrace$ such that $Y:=\mathscr{H}(V_\ell)$, whose dimension is $\nu$, is not included in $V_1\oplus \dots \oplus V_k$. Since $K$ is a normal subgroup of $H$, $K\mathscr{H}=\mathscr{H}K$. In particular, $KY=Y$ so $Y$ is stable under $K$.  
Since $Y\cap (V_1\oplus \dots \oplus V_k)$ is stable under $K$, by minimality of the dimension $\nu$, it follows that the dimension of $Y\cap (V_1\oplus \dots \oplus V_k)$ is either $0$ or $\nu$. In the first case, this proves that $Y$ is in direct sum with $V_1\oplus \dots \oplus V_k$ and, in 
the second case, this proves that $Y \subset V_1\oplus \dots \oplus V_k$. In both situations this is a contradiction thus $V_1\oplus \dots \oplus V_k= C^n$. Hence $\nu k=n$ and, since $n$ is prime, we have $k=n$ and $\nu=1$. 
At this stage, we know that $V_1\oplus \dots \oplus V_n= C^n$ with $\dim(V_j)=1$ and $K(V_j)=V_j$ for all $j\in\lbrace 1,\ldots,n\rbrace$. This implies that $K$ is composed of diagonal matrices in a basis $\mathcal{B}$ induced by $V_1,\ldots,V_n$. 

Let us prove by contradiction that the common eigenspaces of the matrices in $K$ must have the same dimension. Indeed, 
if it is not the case and if $W_1,\ldots,W_\ell$ are the eigenspaces of $K$ of maximal dimension, then we have $W:=W_1\oplus\ldots\oplus W_\ell\subsetneq C^n$ (it is not $C^n$ because we assume that there exists at least an eigenspace which has an another dimension). Since $K(W_i)=W_i$  for all $i\in\lbrace 1,\ldots,n\rbrace$ and  $K\mathscr{H}=\mathscr{H}K$ for all $\mathscr{H}\in H$, we have $K(\mathscr{H}(W_i))=\mathscr{H}(W_i)$. Thus, by maximality of $\dim(W_i)=\dim(\mathscr{H}(W_i))$, $\mathscr{H}(W_i)$ is a common eigenspace of the matrices in $K$, that is $\mathscr{H}(W_i)=W_j$ for a certain $j$. Hence  $\mathscr{H}(W_i)\subset W$, which proves that $H(W)\subset W$ (and taking the dimension, we have $H(W)=W$), in contradiction with the irreducibility of $H$. 

We proved that the matrices in $K$ are diagonal and we see that the common eigenspaces of the matrices in $K$ have the same dimension. Since, the direct sum of these eigenspaces is $C^n$, this dimension must divides $n$, which is a prime number. Thus, this dimension is $1$ or $n$. In the second case, it means that the matrices in $K$ are dilatations, a contradiction with a hypothesis on $K$. In the first case, the eigenspaces are $V_1,\ldots,V_n$. As before, if $\mathscr{H}\in H$, for all $i\in\lbrace 1,\ldots,n\rbrace$, we have $K(\mathscr{H}(V_i))=\mathscr{H}(V_i)$. Thus, $\mathscr{H}(V_i)$, whose  dimension is $1$, is included in an eigenspace of $K$ so  $\mathscr{H}(V_i)=V_j$ for a $j\in\lbrace 1,\ldots,n\rbrace$. We obtain in this case that any element of $H$ induces a permutation between the $V_i$, which contradicts the fact that $H$ is primitive. In both situations, we obtain a contradiction. Finally, $K$ is irreducible. 
\end{proof}

The rest of the subsection is devoted to the computation of the reduced form.
We recall that $A_{[\ell]}:=\phi^{\ell-1}(A)\dots A$ and $G_{[\ell]}$ is the difference Galois group of the system $\phi^{\ell}(Y)=A_{[\ell]}Y$.  Let us first give a criterion of imprimitivity.

  \begin{prop}\label{prop1}
 Assume that $G$ is irreducible and $n$ prime.  It is imprimitive if and only if $G_{[n]}$ is  diagonal. \par 
 Furthermore,  when   $G$ is imprimitive,   the Riccati equation corresponding to $\phi^n (Y)=A_{[n]}Y$ admits a solution    $d\in \k^*$ and we may assume that the $(1,1)$ entry of $A_{[n]}$ is $d$.  
 \end{prop}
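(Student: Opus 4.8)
The plan is to exploit the comparison between $G$ and $G_{[n]}$ provided by Lemma \ref{lem2}, together with the structural description of imprimitive groups obtained in Theorem \ref{thm1}. First I would prove the equivalence. Assume $G$ is irreducible and imprimitive with $n$ prime; then by the discussion preceding Theorem \ref{thm1} the decomposition $C^n=V_1\oplus\dots\oplus V_n$ has all $\dim V_i=1$, and after conjugation $G\subset\bigcup_{\ell=0}^{n-1}\mathcal{M}_\ell$ with each $\mathcal{M}_\ell\neq\emptyset$. The matrices of the form $\mathrm{Diag}(\alpha_1,\dots,\alpha_n)E^\ell$ satisfy, for the iterate, that $\left(\mathrm{Diag}(\alpha_1,\dots,\alpha_n)E^\ell\right)^n$ is diagonal (this is the computation $\left(E_n(\alpha_1,\dots,\alpha_n)\right)^n=(\Pi\bar\alpha)\mathrm{Id}$, generalized to $E^\ell$ using that $\gcd(\ell,n)=1$). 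Hence, since $A\in G(\k)$ can be taken in reduced form, $A_{[n]}=\phi^{n-1}(A)\cdots A$ is a product of $n$ matrices each lying in some $\mathcal{M}_{\ell_j}(\k)$; one checks that such a product is diagonal precisely when $\sum_j \ell_j\equiv 0\pmod n$, and by Lemma \ref{lem2} applied with $\ell=n$ together with the fact that $G_{[n]}$ has the same identity component as $G$, the group $G_{[n]}$ is conjugate to a subgroup of the diagonal torus; but one must argue it actually lands in the diagonal group, which follows because $A_{[n]}$ is $\phi^n$-equivalent into $G_{[n]}(\k)$ and $G_{[n]}\subset G$-type constraints force the $\mathcal{M}_0$ component. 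For the converse, suppose $G$ is irreducible, $n$ prime, and $G_{[n]}$ is diagonal. By Lemma \ref{lem2}, $G^0=G_{[n]}^0$ is contained in the diagonal torus, so $G^0$ consists of diagonal matrices; since $G/G^0$ is finite and cyclic (by Theorem \ref{theo:reduced form}) and $G$ is irreducible, Lemma \ref{lem:irred_primitive_G} guarantees $G^0$ is not only dilatations. Now if $G$ were primitive, Proposition \ref{prop:irred_primitive} (applied with $H:=G$, $K:=G^0$) would force $G^0$ to be irreducible, contradicting that $G^0$ is diagonal (a positive-dimensional diagonal group is never irreducible for $n\geq 2$). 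Hence $G$ is imprimitive.

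Next I would establish the second assertion. Suppose now $G$ is imprimitive (and irreducible, $n$ prime). From the first part, $G_{[n]}$ is diagonal, so the system $\phi^n(Y)=A_{[n]}Y$ has difference Galois group a diagonal group; by Lemma \ref{lem1} (or directly, since a diagonal group is reducible in every coordinate), the operator associated to $[A_{[n]}]$, or rather the system itself, is completely reducible into one-dimensional pieces. Concretely, being in reduced form we may assume $A_{[n]}\in G_{[n]}(\k)$ is diagonal, say $A_{[n]}=\mathrm{Diag}(d_1,\dots,d_n)$ with $d_i\in\k^*$; then each equation $\phi^n(y)=d_i y$ has the nonzero rational solution witnessed by the reduced-form matrix (the $i$-th column of a fundamental solution, or simply: a diagonal system in reduced form over $\k$ has its diagonal entries realized as logarithmic-derivative-type data, giving that $\phi-\alpha$ type factors exist). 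More directly: the companion operator $L_{[n]}$ of order $n$ built from the cyclic-vector presentation of $[A_{[n]}]$ has a right factor of order one by Lemma \ref{lem1} since $G_{[n]}$ is reducible, so by Lemma \ref{lem:eq left factor} the corresponding Riccati equation for $\phi^n$ has a solution $d\in\k^*$. Finally, to arrange that the $(1,1)$ entry of $A_{[n]}$ equals $d$: one uses the freedom of a gauge transformation over $\k$ (equivalently, reorders the one-dimensional invariant lines and normalizes), exactly as in Remark \ref{rem:rightFactorOrder1}, to put the chosen one-dimensional right factor in the first slot; since after the first part $A_{[n]}$ may be taken diagonal this is just a permutation conjugation, harmless over $\k$.

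The main obstacle I anticipate is the converse direction of the equivalence — specifically, ruling out that $G$ is irreducible and primitive while $G_{[n]}$ is diagonal. The clean way through is Proposition \ref{prop:irred_primitive}: its hypotheses are that $n$ is prime, $H$ is irreducible, $K\triangleleft H$ with $K$ not composed only of dilatations; one takes $H=G$, $K=G^0$, notes $K\triangleleft H$ automatically, uses Lemma \ref{lem:irred_primitive_G} for the "not only dilatations" clause, and then primitivity of $H$ would force $K=G^0$ irreducible — impossible since $G^0\subseteq G_{[n]}$ is diagonal of positive dimension (if $G^0$ were finite then $G$ would be finite, and a finite irreducible subgroup of diagonal matrices is impossible for $n\geq 2$, contradicting irreducibility of $G$ again via Lemma \ref{lem:irred_primitive_G} with $H=G$, $K=G^0$). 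A secondary subtlety is being careful that "$G_{[n]}$ is diagonal" is a statement up to the conjugation inherent in the definition of the Galois group, and matching that conjugation with the one making $G\subset\bigcup_\ell\mathcal{M}_\ell$; this is handled by noting $G_{[n]}\subseteq$ (something conjugate to) a subgroup of $G$ in the obvious presentation, as in Lemma \ref{lem2}'s proof, so the two normalizations are compatible.
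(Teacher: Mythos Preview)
Your converse direction is essentially identical to the paper's proof: you use Lemma~\ref{lem2} to get $G^{0}=G_{[n]}^{0}$ diagonal, invoke Lemma~\ref{lem:irred_primitive_G} to ensure $G^{0}$ is not only dilatations, and then apply Proposition~\ref{prop:irred_primitive} with $H=G$, $K=G^{0}$ to reach a contradiction with primitivity. The Riccati statement is likewise handled the same way (the paper simply cites Remark~\ref{rem:rightFactorOrder1}, which amounts to your Lemma~\ref{lem1}/Lemma~\ref{lem:eq left factor} argument).

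Your forward direction, however, is needlessly tangled and this is where the paper is cleaner. You try to reach diagonality of $G_{[n]}$ by going through Lemma~\ref{lem2} and vague ``$G_{[n]}\subset G$-type constraints'', worrying about whether $G_{[n]}$ \emph{actually} lands in the diagonal torus rather than just being conjugate to a subgroup of it. None of this is needed. The paper simply takes a reduced form $B=\phi(T)AT^{-1}$, which by Theorem~\ref{theo:reduced form} lies in $G(\k)$; since $G$ is irreducible, imprimitive, with $n$ prime, Theorem~\ref{thm1} forces $B$ to be of the shape $E_{n}(b_{1},\dots,b_{n})$. Then one \emph{computes} that $B_{[n]}=\phi^{n-1}(B)\cdots\phi(B)B$ is diagonal (each factor is $\mathrm{Diag}(\cdots)E$ and $E^{n}=\mathrm{Id}_{n}$). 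Since $A_{[n]}$ is $\phi^{n}$-gauge equivalent to the diagonal matrix $B_{[n]}$, Theorem~\ref{theo:reduced form} immediately gives that $G_{[n]}$ is a subgroup of the diagonal torus. Your detour through ``$\sum_{j}\ell_{j}\equiv 0\pmod n$'' and Lemma~\ref{lem2} is avoidable; note in particular that all the $\ell_{j}$ are equal, since $\phi$ acts only on entries and preserves the $E^{\ell}$-shape. Also, your parenthetical worry about $G^{0}$ being finite is unnecessary: Lemma~\ref{lem:irred_primitive_G} already excludes $G^{0}=\{\mathrm{Id}\}$, since the identity is a dilatation.
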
 
 
 \begin{proof}
 If $G$ is imprimitive but irreducible,  there exists a gauge transformation $T$ such that $B:=\phi(T)AT^{-1}$ is an invertible matrix of the form  $\begin{pmatrix}
& b_1&&\\ 
 &  &\ddots & \\
&&&b_{n-1}\\
b_{n}&&&
\end{pmatrix}$, where $b_i\in \mathbf{k}^*$,  and  such that $[B]$ is in reduced form.  Then,  $\phi^{n}(T)A_{[n]}T^{-1}=\phi^{n-1}(B)\dots \phi(B)B=:B_{[n]}$ is diagonal.   By Theorem \ref{theo:reduced form}, 
this proves that the Galois group of $\phi^{n}(Y)=A_{[n]}Y$ is  diagonal. \par 
    Let us prove the converse.   Assume that the Galois group of $\phi^{n}(Y)=A_{[n]}Y$ is  diagonal,  and $G$ is irreducible.  Consider  a basis of $C^n$ such that the matrices of  $G_{[n]}$ are diagonal. Then, from Lemma \ref{lem2},   $G^0 =G_{[n]}^0$ thus $G^0$ is  diagonal.  Let us prove that $G$ is imprimitive. 
From Lemma \ref{lem:irred_primitive_G} (where $H:=G^0$), the group $G^0$ is not composed only of dilatations. Thus, by Proposition \ref{prop:irred_primitive} (applied to $H:=G$ and $K:=G^0$), if $G$ is a primitive group then $G^0$ is irreducible, which contradicts the fact that $G^0$ is diagonal. Thus, $G$ is imprimitive. 
 
The statement on the Riccati equation is a consequence of Remark~\ref{rem:rightFactorOrder1}.
\end{proof}

Assume that $G$ is irreducible,  imprimitive and $n$ is prime.  By Proposition  \ref{prop1}, the Riccati equation corresponding to $\phi^n (Y)=A_{[n]}Y$ admits a solution    $d\in \k^*$.   Assume that we are able to compute a reduced form of $\phi(y)=dy$, that is compute $f\in \k^*$ such that $\phi(y)=d\frac{\phi(f)}{f}y$ is reduced. The goal of what follows is to prove the following result.

\begin{prop}\label{prop2}
 Assume that $G$ is irreducible, imprimitive  and $n$ prime. The system $\phi(Y)=AY$ is equivalent to $\phi(Y)=E_n (1,\dots, 1, d)Y$. 
The system $\phi(Y)=E_n (1,\dots, 1,f^{-1}, \phi(f)d)Y$ is a reduced form of $\phi(Y)=E_n (1,\dots, 1, d)Y$ with reduction matrix $\mathrm{Diag}(1,\ldots,1,f)$.
\end{prop}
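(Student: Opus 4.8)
The plan is to split the statement into two parts. First I would show that $\phi(Y)=AY$ is equivalent over $\k$ to $\phi(Y)=E_n(1,\dots,1,d)Y$; then I would check that the block gauge transformation by $\mathrm{Diag}(1,\dots,1,f)$ puts the latter into reduced form $\phi(Y)=E_n(1,\dots,1,f^{-1},\phi(f)d)Y$.

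For the first part, I would invoke Proposition \ref{prop1}: since $G$ is irreducible and imprimitive with $n$ prime, the Riccati equation attached to $\phi^n(Y)=A_{[n]}Y$ has a solution $d\in\k^*$, and we may assume (after a gauge transformation) that the $(1,1)$ entry of $A_{[n]}$ equals $d$. More directly, the imprimitivity of $G$ together with Lemma \ref{lem1} / Remark \ref{rem:rightFactorOrder1} applied iteratively gives a $T\in\GL_n(\k)$ with $B:=\phi(T)AT^{-1}=E_n(b_1,\dots,b_n)$ for some $b_i\in\k^*$. Now I would use a diagonal gauge transformation $\Delta:=\mathrm{Diag}(\delta_1,\dots,\delta_n)$: conjugating $E_n(b_1,\dots,b_n)$ by $\Delta$ replaces $b_i$ by $\tfrac{\delta_i}{\delta_{i+1}}b_i$ for $1\le i\le n-1$ and $b_n$ by $\tfrac{\delta_n}{\delta_1}b_n$. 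Choosing $\delta_1=1$ and $\delta_{i+1}=\delta_i b_i$ for $i=1,\dots,n-1$ makes the first $n-1$ entries equal to $1$; the last entry then becomes $\tfrac{\delta_n}{\delta_1}b_n=b_1\cdots b_n=:\widetilde d$, which is precisely (the $(1,1)$ entry of) $B_{[n]}$ up to the identification in Proposition \ref{prop1}, so $\widetilde d$ is a Riccati solution for $\phi^n(Y)=A_{[n]}Y$. Since two Riccati solutions $d,\widetilde d$ of the same order-one factor problem differ by a factor of the form $\phi(g)/g$ with $g\in\k^*$ (an order-one gauge equivalence), a further diagonal change of variable $\mathrm{Diag}(1,\dots,1,g)$ replaces $\widetilde d$ by $d$, giving equivalence with $\phi(Y)=E_n(1,\dots,1,d)Y$.

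For the second part, a direct computation: conjugating $E_n(1,\dots,1,d)$ by $T_0:=\mathrm{Diag}(1,\dots,1,f)$, i.e. forming $\phi(T_0)E_n(1,\dots,1,d)T_0^{-1}$, multiplies the $(n-1,n)$ entry (which was $1$) by $\phi(1)/f=f^{-1}$ and the $(n,1)$ entry (which was $d$) by $\phi(f)/1=\phi(f)$, leaving all other entries unchanged. So the transformed matrix is $E_n(1,\dots,1,f^{-1},\phi(f)d)$. It remains to see this system is reduced, i.e. that this matrix lies in $G(\k)$. By Proposition \ref{prop1} the Galois group $G_{[n]}$ of $\phi^n(Y)=A_{[n]}Y$ is diagonal, and the $n$-th iterate of $E_n(1,\dots,1,f^{-1},\phi(f)d)$ is the diagonal matrix $\mathrm{Diag}(\phi(f)d,\dots)$ — more precisely a diagonal matrix all of whose entries equal the Riccati/product value $\phi^{n-1}(\phi(f)d)\cdots$, which by the assumption that $\phi(y)=d\tfrac{\phi(f)}{f}y$ is reduced is exactly an element of $\Gdiag(\k)=G_{[n]}(\k)$. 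Combining: the group $\widetilde G$ generated (over $\k$) by $E_n(1,\dots,1,f^{-1},\phi(f)d)$ — namely $\bigcup_\ell \mathcal M_\ell \cap (\text{constraint on }\Pi\baralp)$ — contains $A$ after gauge transformation, and by Theorem \ref{theo:reduced form} together with Theorem \ref{thm1} and the fact that its $n$-th power already realizes the reduced diagonal part, no proper algebraic subgroup of $\widetilde G$ can absorb it; hence $G$ is conjugate to $\widetilde G$ and the system is in reduced form.

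The main obstacle is the bookkeeping in the first part: passing from "some" Riccati solution $\widetilde d$ obtained by the normalizing diagonal transformation to the specific $d$ provided by Proposition \ref{prop1}, and making precise the claim that two such solutions differ by $\phi(g)/g$. This uses that both $\phi-d$ and $\phi-\widetilde d$ are right factors of the same order-one problem coming from $\phi^n(Y)=A_{[n]}Y$ with the same (necessarily unique up to $\k^*$-scaling of the solution line) invariant line, so the ratio $\widetilde d/d$ is $\phi(g)/g$; once this is in hand, the remaining computations are the straightforward conjugation formulas for $E_n$ recorded above, and the reducedness follows cleanly from the hypothesis that $\phi(y)=d\tfrac{\phi(f)}{f}y$ is reduced together with Proposition \ref{prop1} and Theorem \ref{theo:reduced form}.
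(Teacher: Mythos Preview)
Your first part has a genuine gap. The minor issue is that your diagonal gauge formula is wrong: for a $\phi$-gauge by $\Delta=\mathrm{Diag}(\delta_1,\dots,\delta_n)$ the new entries of $E_n(b_1,\dots,b_n)$ are $\phi(\delta_i)b_i/\delta_{i+1}$ (and $\phi(\delta_n)b_n/\delta_1$), not $\delta_ib_i/\delta_{i+1}$; fixing this you land on some $\widetilde d\in\k^*$ as in Lemma~\ref{lem5}, but it is not $b_1\cdots b_n$. The serious issue is the passage from $\widetilde d$ to the specific $d$ supplied by Proposition~\ref{prop1}. You assert that two Riccati solutions of the $\phi^n$-system define ``the same invariant line'' and hence differ by $\phi(g)/g$. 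But the $\phi^n$-iterate is (equivalent to) a diagonal system and therefore has $n$ distinct invariant lines, giving $n$ Riccati solutions that are in general \emph{not} related by $\phi^n(g)/g$ (let alone $\phi(g)/g$). So a one-line diagonal gauge cannot in general carry $\widetilde d$ to $d$. The paper handles this with real work: it shows, by analysing the gauge that diagonalises $A_{[n]}$, that the diagonal entries $d=d_1,d_2,\dots,d_n$ are, up to a $\phi^n$-gauge and a permutation, exactly $\phi^{-i_1}(d),\phi^{1-i_1}(d),\dots,\phi^{n-1-i_1}(d)$ for some $i_1$; thus $[A_{[n]}]$ and $[F_{[n]}]$ are $\phi^n$-equivalent for $F=E_n(1,\dots,1,\phi^{-i_1}(d))$. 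It then proves a separate lemma (Lemma~\ref{lem:gaugeTransfoEquiv}) lifting $\phi^n$-equivalence of $[A_{[n]}]$ and $[F_{[n]}]$ to $\phi$-equivalence of $[A]$ and $[F]$, using that $\phi^n$-constants coincide with $\phi$-constants. Finally it checks $[E_n(1,\dots,1,\phi^{-i_1}(d))]$ is $\phi$-equivalent to $[E_n(1,\dots,1,d)]$. None of these steps follow from your uniqueness claim.

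Your reducedness argument is also too vague: showing that the $n$-th iterate lies in $G_{[n]}(\k)=G^0(\k)$ does not by itself show the matrix lies in $G(\k)$. The clean argument is that any further reduction of $E_n(1,\dots,1,f^{-1},\phi(f)d)$ must be by some $T=\mathrm{Diag}(f_1,\dots,f_n)E^k$ (Theorems~\ref{theo:reduced form} and~\ref{thm1}); the product of the nonzero entries of $\phi(T)E_n(1,\dots,1,f^{-1},\phi(f)d)T^{-1}$ equals $\tfrac{\phi(f_1\cdots f_n)}{f_1\cdots f_n}\cdot\tfrac{\phi(f)}{f}d$, and since $\phi(y)=\tfrac{\phi(f)}{f}d\,y$ is already reduced, the algebraic group generated by this product cannot shrink. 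Hence no proper reduction is possible.
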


Let us begin by proving a technical lemma. 
\begin{lem}\label{lem5}
 Assume that $G$ is irreducible, imprimitive  and $n$ prime.  Then, there exist $T\in \mathrm{GL}_n (\mathbf{k})$ and $u\in \mathbf{k}^*$ such that $\phi(T)AT^{-1}=\begin{pmatrix}
& 1&&\\ 
 &  &\ddots & \\
&&&1\\
u&&&
\end{pmatrix}$, that is, $\phi(T)AT^{-1}=E_n(1,\ldots,1,u)$. 
\end{lem}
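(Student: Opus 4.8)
The plan is to start from the fact, established in Proposition \ref{prop1}, that $G$ is imprimitive and irreducible, so up to conjugation $G\subset\bigcup_{\ell=0}^{n-1}\mathcal{M}_\ell$ and $G$ contains a nondiagonal element. By Theorem \ref{theo:reduced form} there is a gauge transformation bringing $[A]$ into reduced form, i.e.\ into a system $[B]$ with $B\in G(\k)$; since $G$ is made of generalized permutation matrices, $B$ is an invertible matrix of the shape $\mathrm{Diag}(b_1,\dots,b_n)E$ with $b_i\in\k^*$ (a priori $B$ could lie in $G\cap\mathcal{M}_0$, i.e.\ be diagonal, but then the whole orbit would be diagonal, contradicting that $G$ is not reducible — indeed $B\in G(\k)$ reduced and $G$ irreducible forces $B\notin\mathcal{M}_0(\k)$; alternatively one just invokes Remark \ref{rem:rightFactorOrder1} and the Riccati solution $d$ produced in Proposition \ref{prop1} to get directly an equivalent system whose first subdiagonal behaviour is of companion-type). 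So after an initial change of variables we may assume $A=\mathrm{Diag}(b_1,\dots,b_n)E$ for some $b_i\in\k^*$, i.e.\ $A=E_n(b_1,\dots,b_n)$.

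First I would normalize the entries $b_1,\dots,b_{n-1}$ to $1$ by a diagonal gauge transformation. Seek $T=\mathrm{Diag}(t_1,\dots,t_n)$ with $t_i\in\k^*$; then
$$
\phi(T)E_n(b_1,\dots,b_n)T^{-1}=E_n\!\left(\frac{\phi(t_1)}{t_2}b_1,\ \frac{\phi(t_2)}{t_3}b_2,\ \dots,\ \frac{\phi(t_{n-1})}{t_n}b_{n-1},\ \frac{\phi(t_n)}{t_1}b_n\right),
$$
since conjugating $E_n(\beta_1,\dots,\beta_n)$ by $\mathrm{Diag}(t_1,\dots,t_n)$ cyclically shifts the scaling by one slot (this is exactly the commutation relation $P\cdot\mathrm{Diag}(\alpha_n,\alpha_1,\dots,\alpha_{n-1})=\mathrm{Diag}(\alpha_1,\dots,\alpha_n)\cdot P$ already used in the proof of Theorem \ref{thm1}, applied with $P=E_n(b_1,\dots,b_n)$). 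Now I solve the triangular system of equations $\phi(t_i)b_i=t_{i+1}$ for $i=1,\dots,n-1$ recursively: pick $t_1:=1$, then set $t_2:=\phi(t_1)b_1=b_1$, $t_3:=\phi(t_2)b_2$, and in general $t_{i+1}:=\phi(t_i)b_i$; each $t_i$ is a well-defined nonzero element of $\k$ because $\phi$ is an automorphism and the $b_i$ are invertible. With these choices the first $n-1$ scaling factors become $1$, and the last one becomes $u:=\frac{\phi(t_n)}{t_1}b_n=\phi(t_n)b_n\in\k^*$. This gives $\phi(T)AT^{-1}=E_n(1,\dots,1,u)$ with $T=\mathrm{Diag}(1,b_1,\phi(b_1)b_2,\dots)$, which is the claimed form, composing the two gauge transformations into a single $T\in\GL_n(\k)$.

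The only genuine subtlety — and the step I'd expect to need the most care — is the passage at the very start from ``$[A]$ is equivalent to something in $G(\k)$'' to ``$[A]$ is equivalent to $E_n(b_1,\dots,b_n)$ with all $b_i\in\k^*$''. One must rule out that the reduced form lands in $\mathcal{M}_0$ (all diagonal) or more generally has some subdiagonal entry vanishing in $\k$; this is where irreducibility of $G$ is used crucially, via Lemma \ref{lem1}: if any of the $b_i$ were zero the matrix would be block upper- or lower-triangular up to a permutation of coordinates, forcing $G$ reducible. Equivalently, and perhaps more cleanly for the write-up, one uses Proposition \ref{prop1}: the Riccati equation for $\phi^n(Y)=A_{[n]}Y$ has a solution $d\in\k^*$, and Remark \ref{rem:rightFactorOrder1} then produces an equivalent companion-type system whose iteration is diagonal, from which the $E_n(b_1,\dots,b_n)$ shape with invertible entries follows. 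Once that normalization is in hand, the recursive elimination of $b_1,\dots,b_{n-1}$ above is entirely routine.
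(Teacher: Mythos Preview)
Your proposal is correct and follows essentially the same approach as the paper: first use Theorem~\ref{theo:reduced form} together with the imprimitive structure of $G$ to bring $[A]$ to a system $E_n(b_1,\dots,b_n)$ with $b_i\in\k^*$, then kill $b_1,\dots,b_{n-1}$ by a diagonal gauge transformation. The paper does the diagonal elimination one entry at a time (first $D=\mathrm{Diag}(1,u_2,1,\dots,1)$, then $D=\mathrm{Diag}(1,1,\phi(u_2)u_3,1,\dots,1)$, etc.), whereas you solve the recursion $t_{i+1}=\phi(t_i)b_i$ all at once; these are the same computation. Your discussion of the initial normalization is in fact more explicit than the paper's, which simply asserts the existence of such a form by citing Theorem~\ref{theo:reduced form}. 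One small point you leave implicit (as does the paper): the reduced $B\in G(\k)$ could a priori lie in $\mathcal{M}_\ell(\k)$ for some $1<\ell<n$, not just $\mathcal{M}_0$ or $\mathcal{M}_1$; but since $n$ is prime, $E^\ell$ is conjugate to $E$ by a permutation matrix, so a further constant gauge transformation handles this.
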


\begin{proof}
This is inspired by \cite[Section 4.2]{hendriks1998algorithm}. By Theorem \ref{theo:reduced form}, there exist $T\in \mathrm{GL}_n (\mathbf{k})$ and $u_1,\dots,  u_n \in \mathbf{k}^*$  such that 
$\phi(T)AT^{-1}=E_n(u_2,\ldots,u_n,u_1)$.
Let $D:=\mathrm{Diag}\left(a_1,\ldots,a_n\right)$.
We have the following equality 
$$
\phi(D)E_n(u_2,\ldots,u_n,u_1)D^{-1} = E_n\left(\frac{\phi(a_1) u_2}{a_2},\ldots,\frac{\phi(a_{n-1}) u_n}{a_n},\frac{\phi(a_n) u_1}{a_1}\right)\, .
$$

If we perform the gauge transformation $D:=\mathrm{Diag}(1,u_2,1,\ldots,1)$, we may reduce to the case $u_2=1$. Then,  if $n>2$,  performing the gauge transformation $D:=\mathrm{Diag}(1,1,\phi(u_2)u_3,1,\ldots,1)$, we may reduce to the case $u_3=1$. Iterating this process  allows at the end to reduce to the case $u_2= \dots=u_n=1$ and we change $u_1$ to $u:=u_1\phi(u_n)\phi^2(u_{n-1})\ldots\phi^{n-1}(u_2)$.
\end{proof}

Let us continue the proof of Propositon \ref{prop2}. 
By the previous lemma, we have that $[A]$ is equivalent to $E_n(1,\ldots,1,u)$, therefore $\phi^n(Y)=A_{[n]}Y$ is equivalent to $\phi^n(Y)=\mathrm{Diag}(u,\phi(u),\dots,  \phi^{n-1} u)Y$.  Unfortunately, we are not yet able to compute $u$.   By Proposition \ref{prop1},  there exists 
 a diagonal system  $\phi^n (Y)=DY$, that is  equivalent to $\phi^n (Y)=A_{[n]}Y$ with $(1,1)$ entry $d$. 
 We write 
$$
D:=\mathrm{Diag}(d_1,d_2,d_3,\ldots,d_n)
$$
where we set $d_1:=d$.  Let us now see that we may reduce to the case where $d_i=\phi^{i-1-i_1}(d)$ for an integer $i_1\in\lbrace 1,\ldots,n\rbrace$. 
\begin{lem}
There exists a  gauge transformation $X$ and an integer $i_1\in\lbrace 1,\ldots,n\rbrace$ such that 
\begin{equation}
\label{eq:findX}
   \phi^n(X)D X^{-1} = \mathrm{Diag}(\phi^{-i_1}(d),\ldots,\phi^{n-1-i_1}(d))\, . 
\end{equation}
\end{lem}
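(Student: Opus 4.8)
We have two $\phi^n$-systems that are equivalent over $\mathbf{k}$: the system $\phi^n(Y) = \mathrm{Diag}(u, \phi(u), \dots, \phi^{n-1}(u))Y$ coming from $E_n(1,\dots,1,u)$ via iteration, and the system $\phi^n(Y) = DY$ with $D = \mathrm{Diag}(d_1,\dots,d_n)$, $d_1 = d$. The plan is to extract, from the equivalence gauge transformation between these two diagonal $\phi^n$-systems, enough information to identify each $d_i$ with a shift of $d$. The key structural fact is that a gauge transformation over $\mathbf{k}$ conjugating one diagonal system to another must, after reordering, match each diagonal entry of the target to a diagonal entry of the source up to a factor $\phi^n(g)/g$ with $g \in \mathbf{k}^*$; this forces a bijection between the two index sets.

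**Main steps.** First I would invoke the equivalence: since $[A]$ is equivalent to $E_n(1,\dots,1,u)$ by Lemma \ref{lem5} and $\phi^n(Y) = A_{[n]}Y$ is equivalent to $\phi^n(Y) = DY$ by Proposition \ref{prop1}, there is a matrix $P \in \GL_n(\mathbf{k})$ with $\phi^n(P)\,\mathrm{Diag}(u,\phi(u),\dots,\phi^{n-1}(u))\,P^{-1} = D$. Second, I would argue that because both sides are diagonal with entries in $\mathbf{k}^*$ and the field $\mathbf{k}$ satisfies the standing hypotheses (in particular there are no nontrivial algebraic difference field extensions, so the relevant $\phi^n$-difference module decomposes into a direct sum of rank-one pieces whose isomorphism classes are detected by the ratio classes $d_i / \phi^j(u)$ modulo $\phi^n(\mathbf{k}^*)/\mathbf{k}^*$), the map $P$ must permute the one-dimensional factors: there is a permutation $\tau \in \mathfrak{S}_n$ and elements $g_i \in \mathbf{k}^*$ such that $d_i = \dfrac{\phi^n(g_i)}{g_i}\,\phi^{\tau(i)-1}(u)$. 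Third, I would translate this into a statement purely about $d$: since $d_1 = d$, we get $d = \frac{\phi^n(g_1)}{g_1}\phi^{\tau(1)-1}(u)$, and setting $i_1 := \tau(1)$ we can rewrite $\phi^{\tau(i)-1}(u) = \phi^{\tau(i)-i_1}\!\bigl(\phi^{i_1-1}(u)\bigr)$; using the relation $\phi^{i_1-1}(u) = g_1 d / \phi^n(g_1)\cdot$ (shift) one expresses each $d_i$ as $\phi^{\,?}(d)$ times a factor of the form $\phi^n(h_i)/h_i$. Finally, performing the diagonal gauge transformation $X := \mathrm{Diag}(h_1,\dots,h_n)$ (possibly composed with a permutation matrix realizing $\tau^{-1}$, which does not change the fact that it lies in $\GL_n(\mathbf{k})$) absorbs all these factors, and relabelling so that the shift exponents are $-i_1, 1-i_1, \dots, n-1-i_1$ gives \eqref{eq:findX}. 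One should check the exponents come out consecutively: this is where the cyclic/companion structure of $E_n$ enters, since the iterated companion forces the source diagonal to be exactly $(u, \phi(u), \dots, \phi^{n-1}(u))$, a full orbit under $\phi$, so the permutation $\tau$ only amounts to a cyclic relabelling and the target entries are automatically consecutive shifts of $d$ once the base point $i_1$ is fixed.

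**Main obstacle.** The delicate point is justifying that the gauge matrix $P$ between the two diagonal $\phi^n$-systems must be (up to a permutation matrix) itself diagonal, i.e. that there are no "off-diagonal" gauge transformations mixing the rank-one constituents unless two of the constituents are isomorphic. This is precisely a Krull–Schmidt / uniqueness-of-decomposition statement for $\phi^n$-difference modules over $\mathbf{k}$, and it relies on the hypothesis that $\mathbf{k}$ has no nontrivial finite algebraic difference extensions (so that rank-one $\phi^n$-modules do not acquire extra endomorphisms). Once that is in place, the remaining bookkeeping — tracking which shift of $d$ each $d_i$ equals and verifying the exponents are consecutive — is routine, and the explicit $X$ can be written down as a product of a permutation matrix and a diagonal matrix with entries in $\mathbf{k}^*$.
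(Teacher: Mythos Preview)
Your overall plan matches the paper's: obtain a bijection $j\mapsto i_j$ together with elements $f_j\in\mathbf{k}^*$ satisfying $\phi^n(f_j)\,d_j=\phi^{i_j-1}(u)\,f_j$, eliminate $u$ using the relation for $j=1$ (so that each $d_j$ becomes $\phi^{i_j-i_1}(d)$ up to a factor $\phi^n(g_j)/g_j$), and then finish with a diagonal gauge followed by a permutation matrix. The difference lies in how you justify the first step. You frame it as a Krull--Schmidt statement and single out as the main obstacle the claim that the gauge matrix is, up to a permutation, diagonal. That stronger claim is neither needed nor always true (if two of the $\phi^{i-1}(u)$ happen to be $\phi^n$-equivalent over $\mathbf{k}$, the gauge matrix can have several nonzero entries in a row). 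The paper bypasses this with a completely elementary entrywise argument: writing the equivalence as $\phi^n(Z)\,D=\mathrm{Diag}(u,\phi(u),\dots,\phi^{n-1}(u))\,Z$, each nonzero entry $Z_{i,j}$ immediately yields $\phi^n(Z_{i,j})\,d_j=\phi^{i-1}(u)\,Z_{i,j}$; and since $\det Z\neq 0$, the Leibniz expansion guarantees a permutation $j\mapsto i_j$ with all $Z_{i_j,j}\neq 0$. No structural hypothesis on $\mathbf{k}$ beyond invertibility of $Z$ is used here, so what you flag as the delicate point is in fact a non-issue. Finally, your remark that the permutation ``only amounts to a cyclic relabelling'' is unnecessary (and not argued): the paper simply uses that $\{i_1,\dots,i_n\}=\{1,\dots,n\}$ and applies an arbitrary permutation matrix to sort the exponents into the order $-i_1,1-i_1,\dots,n-1-i_1$.
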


\begin{proof}
Note that there exists  an invertible matrix $Z$ such that   $\phi^n (Z)DZ^{-1}=\mathrm{Diag}(u,\phi(u),\dots,  \phi^{n-1} u)$. 
Let $Z_{i,j}$ be the entries of $Z$. Since $Z$ is invertible, for all $j$ there exists $i_j\in\lbrace 1, \ldots,n\rbrace$ such that $Z_{i_j,j}\neq 0$. We can moreover assume that the $i_j$ are distinct. Taking the $(i_j,j)$-coefficient in $\phi^n (Z)D=\mathrm{Diag}(u,\phi(u),\dots,  \phi^{n-1} (u))Z$ yields $\phi^n  (Z_{i_j,j})d_j=\phi^{i_j-1}(u)Z_{i_j,j}$. Thus, for all $j\in\lbrace 1,\ldots, n\rbrace$, there exist $i_j\in\lbrace 1,\ldots, n\rbrace$ and $f_j\in\k^*$ such that 
\begin{equation}
    \label{eq:d}
    \phi^n(f_j)d_j = \phi^{i_j-1}(u)f_j\, .
\end{equation}
 For $j=1$ in \eqref{eq:d}, we have $\phi^{i_1-1}(u) = \frac{\phi^n(f_1)d_1}{f_1}$ thus $\phi^{i_j-1}(u) = \frac{\phi^n(\phi^{i_j-i_1}(f_1))\phi^{i_j-i_1}(d_1)}{\phi^{i_j-i_1}(f_1)}$. Therefore, from \eqref{eq:d}, setting $g_j:= \frac{f_j}{\phi^{i_j-i_1}(f_1)}$,  we obtain
 $$
 \phi^n(g_j)d_j= \frac{\phi^n (f_j)d_j}{\phi^n(\phi^{i_j-i_1}(f_1))}= \frac{ \phi^{i_j-1}(u)f_j}{\phi^n(\phi^{i_j-i_1}(f_1))}=  \phi^{i_j-i_1}(d_1)g_j.
 $$ 
Applying $\phi^{i_1}$ to this equality, we proved that there exists an invertible diagonal matrix $Z_0$ and distinct integers $i_1,\ldots,i_n\in\lbrace 1,\ldots,n\rbrace$ such that 
$$
\phi^n(Z_0)\phi^{i_1}(D)Z_0^{-1} = \mathrm{Diag}(\phi^{i_1}(d_1) ,\dots,  \phi^{i_n}(d_1))\, . 
$$
Thus there exists a permutation matrix $Z_1$ such that 
$$
\phi^n(Y)\phi^{i_1}(D) Y^{-1} = \mathrm{Diag}(d,\ldots,\phi^{n-1}(d))\quad \text{with } Y:=Z_1Z_0\, .
$$
 Thus,
$$
\phi^n(X)D X^{-1} = \mathrm{Diag}(\phi^{-i_1}(d),\ldots,\phi^{n-1-i_1}(d))\quad \text{where }X:=\phi^{-i_1}(Y)\, .
$$
\end{proof}

Thus, at this stage, we know  the existence of $i_1\in\lbrace 1,\ldots,n\rbrace$  such that   $\phi^n (Y)=A_{[n]}Y$ is equivalent to 
$$
\phi^n(Y)=\mathrm{Diag}(\phi^{-i_1}(d),\phi^{1-i_1}(d),\ldots,  \phi^{n-1-i_1}(d))Y\, .
$$
Since the system 
$
\phi^n(Y)=\mathrm{Diag}(\phi^{-i_1}(d),\phi^{1-i_1}(d),\ldots,  \phi^{n-1-i_1}(d))Y
$ is diagonal, we may assume that we have a diagonal fundamental matrix of solutions $U$. This matrix can be taken of the form $U=\mathrm{Diag}(y,\phi(y),\dots,  \phi^{n-1}(y))$. Then, 
$$
\phi(U)=FU, \quad \text{where}\quad F:=\begin{pmatrix}
& 1&&\\ 
 &  &\ddots & \\
&&&1 \\
\phi^{-i_1}(d)&&&
\end{pmatrix}=E_n (1,\dots, 1, \phi^{-i_1}(d))\, .
$$
By construction, 
$F_{[n]} = \mathrm{Diag}(\phi^{-i_1}(d),\phi^{1-i_1}(d),\ldots,\phi^{n-1-i_1}(d))$. We thus have two $\phi^n$- systems $\phi^n(Y) = A_{[n]}Y$ and $\phi^n(Y) = F_{[n]}Y$, that are equivalent and we want to prove that they are equivalent as $\phi$-system. This is the goal of the next lemma. 

\begin{lem}
\label{lem:gaugeTransfoEquiv}
Let $F\in\GL_n(\k)$ be such that the systems $\phi^n(Y) = F_{[n]}Y$ and $\phi^n(Y) = A_{[n]}Y$ are equivalent. 
Then, the systems $[A]$ and $[F]$ are equivalent.
\end{lem}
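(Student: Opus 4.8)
The strategy is to reduce, by a preliminary gauge transformation, to the situation $A_{[n]}=F_{[n]}$, then to exhibit inside a Picard--Vessiot ring of the ``doubled'' system a matrix that conjugates $[A]$ into $[F]$ identically, and finally to prove that this matrix is defined over $\k$; this last point is where the hypotheses that $n$ is prime and $G$ is imprimitive and irreducible are really used.

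\emph{Step 1: reduction to $A_{[n]}=F_{[n]}$.} Choose $P\in\GL_n(\k)$ with $\phi^n(P)A_{[n]}P^{-1}=F_{[n]}$ and set $B:=\phi(P)AP^{-1}$. A telescoping computation (as in \eqref{eq:iterateSystem}) gives $B_{[n]}=\phi^n(P)A_{[n]}P^{-1}=F_{[n]}$, while $[B]$ is equivalent to $[A]$ over $\k$ and hence has difference Galois group conjugate to $G$, so it still satisfies the standing assumptions (irreducible, imprimitive, $n$ prime). Since equivalence of systems is transitive it suffices to prove $[B]\sim[F]$, so after renaming $B$ as $A$ we may assume $A_{[n]}=F_{[n]}$.

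\emph{Step 2: the candidate gauge matrix.} Let $\L$ be a Picard--Vessiot ring over $(\k,\phi)$ for $\phi(Y)=\mathrm{Diag}(A,F)Y$, with fundamental matrices $U$ (for $[A]$) and $V$ (for $[F]$), so $C_{\L}=C_{\k}$. Put $T:=VU^{-1}\in\GL_n(\L)$. From $\phi(U)=AU$ and $\phi(V)=FV$ one gets $\phi(T)=FTA^{-1}$, hence $\phi(T)AT^{-1}=F$ as an identity in $\GL_n(\L)$; therefore $[A]\sim[F]$ over $\k$ as soon as $T\in\GL_n(\k)$. Writing $G_F$ for the difference Galois group of $[F]$ and $\Gamma\subset G\times G_F$ for the Galois group of the doubled system, acting by $\sigma(U)=UC_\sigma$ and $\sigma(V)=VD_\sigma$, we have $\sigma(T)=VD_\sigma C_\sigma^{-1}U^{-1}$, so $T\in\GL_n(\k)$ is equivalent to $C_\sigma=D_\sigma$ for every $\sigma\in\Gamma$, i.e. to $\Gamma$ being the diagonal of $G\times G_F$. (Using $A_{[n]}=F_{[n]}$ one also checks that $W:=U^{-1}V$ is $\phi^n$-invariant, the algebraic counterpart of what we want.)

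\emph{Step 3: $\Gamma$ is the diagonal.} This is the heart. The $\phi^n$-iterate of the doubled system is $\phi^n(Y)=\mathrm{Diag}(A_{[n]},A_{[n]})Y$; since $G$ is imprimitive and irreducible with $n$ prime, Proposition \ref{prop1} gives that $G_{[n]}$ is diagonal, so this iterate is equivalent over $(\k,\phi^n)$ to a diagonal system $[\Delta\oplus\Delta]$, whose Galois group is visibly the diagonal of $G_{[n]}\times G_{[n]}$. By Lemma \ref{lem2} the identity components satisfy $\Gamma^0=\Gamma_{[n]}^0$ and $G^0=G_{[n]}^0=G_F^0$, so (after conjugating the $[F]$-block by a constant matrix) $\Gamma^0=\{(g,g):g\in G^0\}$. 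Now $\Gamma/\Gamma^0$ is finite cyclic (Theorem \ref{theo:reduced form}), generated by some $(h_1,h_2)\Gamma^0$ with $h_1$ generating $G/G^0$ and $h_2$ generating $G_F/G^0$. In the situation of interest $F$ already has the form $E_n(1,\dots,1,\cdot)$, so $G_F$ lies in the monomial group $\bigcup_{\ell=0}^{n-1}\mathcal{M}_\ell$; moreover $G_F$ cannot be diagonal, since a diagonal (reducible) block of $[F]$ would, through the $\phi^n$-equivalence and the telescoping trick of Section~\ref{secfact}, yield an order-one right factor of the scalar operator attached to $[A]$, contradicting the irreducibility of $G$. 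Hence, by Theorem \ref{thm1} applied to both $G$ and $G_F$, one may take $h_1=E_n(\bar\alpha)$ and $h_2=E_n(\bar\beta)$; their $n$-th powers are the scalars $(\Pi\bar\alpha)\,\mathrm{Id}$ and $(\Pi\bar\beta)\,\mathrm{Id}$, and the computation of $\Gamma_{[n]}$ above forces $(h_1^n,h_2^n)$ to be diagonal, i.e. $\Pi\bar\alpha=\Pi\bar\beta$. Two matrices $E_n(\bar\alpha)$, $E_n(\bar\beta)$ with the same product of entries are conjugate by a diagonal constant matrix $R$, and conjugating the $[F]$-block by $R$ keeps $\Gamma^0$ diagonal while turning $h_2$ into $h_1$. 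Thus $\Gamma=\langle\Gamma^0,(h_1,h_1)\rangle$ is the diagonal of $G\times G_F$, whence $T\in\GL_n(\k)$ and $[A]\sim[F]$.

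The main obstacle is Step 3: controlling the Galois group $\Gamma$ of the doubled system, equivalently showing that the solutions of $[F]$ are already seen inside the Picard--Vessiot ring of $[A]$. Everything there rests on the rigidity of imprimitive irreducible groups with $n$ prime (Theorem \ref{thm1}), on Lemma \ref{lem2} for the identity components of iterates, on Proposition \ref{prop1} to know that $G_{[n]}$ is diagonal, and on the standing hypothesis that $\k$ has no proper finite algebraic difference extension (so that no exotic constants appear along the way); the case of a general $F$ also requires first bringing $[F]$ to the monomial form used above. The first two steps --- the telescoping identity $B_{[n]}=F_{[n]}$ and the identity $\phi(T)AT^{-1}=F$ in $\GL_n(\L)$ --- are routine.
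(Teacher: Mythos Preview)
Your approach is genuinely different from the paper's, and considerably more elaborate. The paper's proof is a four–line argument: take fundamental matrices $\mathcal Y$, $\mathcal Z$ for $[A]$, $[F]$ in a common Picard--Vessiot ring, let $T\in\GL_n(\k)$ realise the $\phi^n$-equivalence, observe that $P:=(T\mathcal Y)^{-1}\mathcal Z$ is $\phi^n$-constant, and invoke \cite[Lemma~3.1]{DH21} to conclude $P\in\GL_n(C)$; then the computation $A=\phi(T)^{-1}FT$ is immediate. The single non-trivial input is the external lemma saying that $\phi^n$-constants in such a PV ring already lie in $C$. No use is made of irreducibility, imprimitivity, the primality of $n$, or the shape of $F$.

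Your Step~3, by contrast, tries to compute the Galois group $\Gamma$ of the doubled system explicitly from the imprimitive structure. There is a real gap here. The claim that $\Gamma_{[n]}$ is \emph{literally} the diagonal $\{(g,g)\}$, with respect to the fundamental matrices $U$ and $V$ you fixed, is not justified. What the diagonal argument for $[\Delta\oplus\Delta]$ gives you is that $\Gamma_{[n]}$ is the diagonal \emph{with respect to a fundamental matrix of the form $\mathrm{Diag}(U_0,U_0)$}. Passing to $\mathrm{Diag}(U,V)$ conjugates by $\mathrm{Diag}(P_1,P_2)$ where $P_1,P_2$ are $\phi^n$-constants of the PV ring, and these are a priori not in $C$. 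Your fix (``after conjugating the $[F]$-block by a constant matrix'') presupposes that this conjugator is a $\phi$-constant, which is exactly the content of \cite[Lemma~3.1]{DH21} that you are trying to avoid. The remainder of Step~3 (matching $h_1$ and $h_2$ via Theorem~\ref{thm1}) inherits this problem and also only treats $F$ already in the monomial shape $E_n(1,\dots,1,\cdot)$, whereas the lemma is stated for arbitrary $F\in\GL_n(\k)$; you acknowledge this at the end, but it means your argument does not establish the lemma as stated.

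In short: the paper buys a one-line proof by importing the $\phi^n$-constant result; your route attempts to bypass that import via the group structure, but the step where you identify $\Gamma^0$ with the literal diagonal implicitly needs the same fact, so the argument is circular as written.
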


\begin{proof}
Let $\mathcal{Y}$ and $\mathcal{Z}$ be two fundamental matrices of solutions of $\phi(Y)=AY$ and $\phi(Y)=FY$ respectively. 
Thus, $\phi^n(\mathcal{Y})=A_{[n]}\mathcal{Y}$ and $\phi^n(\mathcal{Z})=F_{[n]}\mathcal{Z}$. By hypothesis,  there exists $T\in\GL_n(\k)$ such that $\mathcal{X}:=T\mathcal{Y}$ is a solution of $\phi^n(Y)=F_{[n]}Y$.
This implies that  $P:=\mathcal{X}^{-1}\mathcal{Z}$ is a $\phi^n$-constant  matrix.  By \cite[Lemma 3.1]{DH21}, the entries of $P$ belong to $C$ so we have 
$$
A = \phi(\mathcal{Y})\mathcal{Y}^{-1} = \phi(T^{-1}) \phi(\mathcal{X})\mathcal{X}^{-1}T= \phi(T^{-1}) \phi(\mathcal{Z})\mathcal{Z}^{-1} T =\phi(T)^{-1} FT .
$$
\end{proof}

Let us finish the proof of Proposition \ref{prop2}.  From what precedes,   $[A]$ is equivalent to $[F]$ where 
$F=E_n (1,\dots, 1, \phi^{-i_1}(d))$. Using the following lemma and the fact that $\phi(1)=1$, we obtain that this system is equivalent to $[J]$, where $J:=E_n (1,\dots, 1, d)$.

\begin{lem}
Let $J\in \GL_n(\k)$ and $\ell\in\mathbb{N}$. The systems $[\phi^{\ell}(J)]$ and $[\phi^{\ell +1}(J)]$ are equivalent.
\end{lem}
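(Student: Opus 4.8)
The plan is to exhibit an explicit gauge transformation, and in fact the statement reduces to the elementary observation that $[A]$ is always equivalent to $[\phi(A)]$. Write $A:=\phi^{\ell}(J)$, so that the claim amounts to showing $[A]$ and $[\phi(A)]=[\phi^{\ell+1}(J)]$ are equivalent, i.e. that there exists $T\in\GL_n(\k)$ with $\phi(T)AT^{-1}=\phi(A)$. I would simply take $T:=A=\phi^{\ell}(J)$, which lies in $\GL_n(\k)$ since $J\in\GL_n(\k)$ and $\phi$ is an automorphism of $\k$. Then $\phi(T)=\phi^{\ell+1}(J)=\phi(A)$, hence
$$
\phi(T)AT^{-1}=\phi(A)\cdot A\cdot A^{-1}=\phi(A)=\phi^{\ell+1}(J),
$$
which is exactly the required relation. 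Therefore $[\phi^{\ell}(J)]$ and $[\phi^{\ell+1}(J)]$ are equivalent.

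There is no genuine obstacle here: this merely records the fact that the change of variables $Z:=AY$ transforms $\phi(Y)=AY$ into $\phi(Z)=\phi(A)Z$, applied once starting from $A=\phi^{\ell}(J)$. Equivalently, at the level of fundamental matrices, if $\mathcal{Y}$ is a fundamental matrix of solutions of $\phi(Y)=\phi^{\ell}(J)Y$ then $\phi^{\ell}(J)\mathcal{Y}$ is a fundamental matrix of solutions of $\phi(Y)=\phi^{\ell+1}(J)Y$, yielding the same conclusion. This is the last ingredient needed to conclude the proof of Proposition~\ref{prop2}, since it lets one replace $F=E_n(1,\dots,1,\phi^{-i_1}(d))$ by $J=E_n(1,\dots,1,d)$ after applying it $i_1$ times (using $\phi(1)=1$).
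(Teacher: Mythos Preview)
Your proof is correct and is essentially identical to the paper's: the paper also takes the gauge transformation $X:=\phi^{\ell}(J)$ and checks $\phi^{\ell+1}(J)=\phi(X)\phi^{\ell}(J)X^{-1}$.
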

\begin{proof}
    It follows from the fact that $\phi^{\ell+1}(J) = \phi(X)\phi^{\ell}(J) X^{-1}$ with $X:=\phi^{\ell}(J)$.
\end{proof}

Thus, we have that $[A]$ and $[J]$ are equivalent systems. 
Now, we perform the gauge transformation $S=\mathrm{Diag}(1,\ldots,1,f)$ to find that $[A]$ is equivalent to $\phi(Y)=E_n (1,\dots, 1,f^{-1}, d\phi(f))Y$.  It remains to check that this system is in a reduced form.

By Theorem \ref{theo:reduced form},  there exists a reduction matrix of the form 
 $T:= \mathrm{Diag}(f_1,\dots, f_n)E^k$.  Note that $T$ may be the identity.  We are going to apply Theorem \ref{thm1} and look at the product of the nonzero entries of  $B:=\phi(T)E_n (1,\dots, 1,f^{-1}, d\phi(f))T^{-1}$. The latter  is $\frac{\phi(f_1\times \dots \times f_n) }{f_1\times \dots \times f_n}\frac{\phi(f)}{f}d$.  Since $\phi y=\frac{\phi(f)}{f}dy$ is in reduced form,  the algebraic group generated by $\frac{\phi(f_1\times \dots \times f_n) }{f_1\times \dots \times f_n}\frac{\phi(f)}{f}d$ is bigger than the algebraic group generated by $\frac{\phi(f)}{f}d$. Since $[B]$ is reduced, by Theorem~\ref{theo:reduced form},  it is also smaller, proving that the algebraic group generated by $\frac{\phi(f_1\times \dots \times f_n) }{f_1\times \dots \times f_n}\frac{\phi(f)}{f}d$ is the algebraic group generated by $\frac{\phi(f)}{f}d$.  By Theorem \ref{theo:reduced form} again, this shows that $\phi(Y)=E_n (1,\dots, 1,f^{-1}, d\phi(f))Y$ is already reduced.

\begin{ex}\label{ex2}
Let us go back to the example \eqref{eq0} and recall, see Example \ref{ex1}, that the difference Galois group $G$ is irreducible. Assume that $t\neq 0$. Instead of applying the criterion of Proposition~\ref{prop1} to show that $G$ is imprimitive,  let us use a shortcut.  We refer to \cite[Section 2.2]{ramis2013local} for more details on what follows. 
We define the Newton polygon of a $q$-difference operator 
$L=\sum_i a_{i} \phi^k$
as the convex hull in $\R^2$ of  
$$
\{(i,j) \ \vert \ i,j \in \Z, \ j \geq \val(a_{i}(z))\} \subset \R^{2}
$$
where $\val : \C((z)) \rightarrow \Q \cup \{+\infty\}$ denotes the $z$-adic valuation.  The Newton polygon  is invariant by an analytic change of variables. 
By Lemma \ref{lem5}, when $n$ is prime, if the difference Galois group of an equation is irreducible and imprimitive,  then the lower bound of the Newton polygon is delimited by the segment from $(0,\val (u))$ to $(n,0)$.  In Example \eqref{eq0}, the lower part is composed of the segment delimited by $(0,1)$ and $(1,0)$ and the segment delimited by $(1,0)$ and $(3,0)$, see Figure \ref{fig1}.  Since $G$ is irreducible,  see Example~\ref{ex1}, we find that $G$ is primitive. 

\begin{figure}[!h]
\begin{center}
\begin{tikzpicture}
\coordinate (O) at (0,2);
\coordinate (A) at (0,1);
\coordinate (B) at (1,0);
\coordinate (C) at (2,0);
\coordinate (D) at (3,0);
\coordinate (E) at (3,2);
\draw (O) -- (A);
\draw (A) -- (B);
\draw (B) -- (D);
\draw (E) -- (D);

\draw [dotted] (0,0) grid (3,2);
\put(0,30){{$(0,1)$}}
\put(20,-15){{$(1,0)$}}
\put(50,-15){{$(2,0)$}}
\put(80,-15){{$(3,0)$}}
\node (centre) at (0,1){$\bullet$};
\node (centre) at (1,0){$\bullet$};
\node (centre) at (2,0){$\bullet$};
\node (centre) at (3,0){$\bullet$};
\end{tikzpicture}
\caption{The Newton polygon of the operator associated with $U(z,t)$.}\label{fig1}
\end{center}
\end{figure}
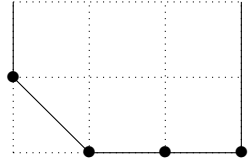

\end{ex}

\subsection{The primitive case} 
In this section, we assume that $G$ is irreducible, primitive, and $n$ is prime. We want to prove that the same holds for $G^0$ and $D(G^0)$ where $D(G^0)$ denotes the derived group of $G^0$. Note that $G^0$ is a connected normal subgroup of $G$ and $D(G^0)$ is a connected normal subgroup of $G^0$. 

First, we recall the following result given in \cite[Lemma 4.2]{adamczewski2021hypertranscendence}.

\begin{lem}
\label{lem:connected+irred}
    If $K\subset \GL_n(C)$ is a connected and irreducible algebraic group, then $K$ is also primitive. 
\end{lem}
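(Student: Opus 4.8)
The plan is to argue by contradiction, exploiting the fact that an imprimitive decomposition produces a morphism to a finite symmetric group, which must be trivial on a connected group.

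Suppose, for a contradiction, that $K$ is connected and irreducible but \emph{not} primitive. Then by definition there is a nontrivial decomposition $C^n = V_1 \oplus \dots \oplus V_k$ into $C$-vector subspaces (with $k \geq 2$) such that every element of $K$ permutes the $V_i$. First I would introduce the subgroup $N \subset \GL_n(C)$ of all invertible matrices permuting the $V_i$; this is an algebraic group (a ``wreath-product'' type subgroup), and sending a matrix to the induced permutation of the index set $\{1,\dots,k\}$ defines a morphism of algebraic groups $\pi : N \to \mathfrak{S}_k$ whose kernel is the block-diagonal subgroup stabilising each $V_i$. By hypothesis $K \subset N$, so we may restrict $\pi$ to $K$.

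The key step is then: since $K$ is connected and $\mathfrak{S}_k$ is finite, the morphism $\pi|_K : K \to \mathfrak{S}_k$ has connected image, hence trivial image. Therefore $K$ is contained in $\ker \pi$, i.e.\ every element of $K$ stabilises each $V_i$ individually. But $k \geq 2$ forces $0 \subsetneq V_1 \subsetneq C^n$, so $V_1$ is a nontrivial $K$-invariant subspace, contradicting the irreducibility of $K$. Hence $K$ is primitive.

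There is essentially no serious obstacle here; the only point requiring a little care is the assertion that $\pi$ is a genuine morphism of algebraic groups (so that connectedness of $K$ transfers to its image) rather than merely an abstract group homomorphism, and that a connected algebraic group has no nontrivial finite quotient of this sort — both of which are standard. I would also remark that this lemma is precisely \cite[Lemma 4.2]{adamczewski2021hypertranscendence}, so one may simply invoke that reference if a self-contained argument is not desired.
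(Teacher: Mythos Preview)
Your argument is correct and is precisely the standard proof: the permutation action of an imprimitive group gives a morphism of algebraic groups to the finite group $\mathfrak{S}_k$, which must be trivial on a connected domain, forcing each $V_i$ to be $K$-stable and contradicting irreducibility. The paper itself does not supply a proof but simply cites \cite[Lemma~4.2]{adamczewski2021hypertranscendence}, which you also correctly identify.
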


\begin{lem}
\label{lem:irred_primitive_G^0}
Let $n$ be a prime number. If $H\subset\GL_n(C)$ is an irreducible and primitive group, then the derived subgroup of $H$, denoted by $D(H)$, is not composed only of dilatations.
\end{lem}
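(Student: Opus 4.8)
The plan is to prove the contrapositive: assuming $D(H)$ consists only of dilatations, I will show that $H$ is either reducible or imprimitive, contradicting the hypotheses. First I would observe that since $D(H)$ is a normal subgroup of $H$ composed only of dilatations, the quotient $H/D(H)$ is abelian (by definition of the derived subgroup) — in fact $H$ is then \emph{metabelian}, i.e. solvable of derived length at most $2$. The key structural fact to exploit is that $H$, being irreducible and primitive, cannot have an abelian normal subgroup that is too large: more precisely, I want to reach a contradiction with irreducibility/primitivity by analyzing $H^0$.

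The main steps, in order: (1) Since $D(H)$ is contained in the dilatations, $H^0$ is solvable (its derived subgroup $D(H^0)\subseteq D(H)$ is central and abelian), hence $H^0$ is a connected solvable algebraic group. (2) By the Lie–Kolchin theorem, $H^0$ is triangularizable, so it stabilizes a line $\ell\subset C^n$; moreover the set of $H^0$-stable lines spans a proper nonzero subspace unless $H^0$ acts by scalars. If $H^0$ acts only by scalars (dilatations), then I apply Lemma~\ref{lem:irred_primitive_G} with $G:=H$ and the normal subgroup $H^0$ (noting $H/H^0$ is finite cyclic by hypothesis on Galois groups — actually here I should be careful: $H$ is a general irreducible primitive group, so I should invoke that $H/H^0$ is finite, which always holds for algebraic groups, but cyclicity is not automatic). (3) If $H^0$ does \emph{not} act by scalars, consider the common weight-space decomposition: since $H^0$ is triangularizable with all commutators central, the eigenvalue characters $\chi_1,\dots,\chi_r$ of $H^0$ on $C^n$ partition $C^n = W_1\oplus\cdots\oplus W_r$ into generalized eigenspaces (this uses that $D(H^0)$ being central scalars forces $H^0$ to actually be \emph{diagonalizable}, not merely triangularizable — a scalar commutator $[g,h]=\lambda\mathrm{Id}$ with $\det=1$ forces $\lambda^n=1$, and combined with connectedness one gets $D(H^0)=\{1\}$, so $H^0$ is a connected abelian linear algebraic group with trivial unipotent part on each factor, hence diagonalizable). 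Then $H$ permutes the $W_i$ because $D(H)\subset Z(H)$ makes conjugation by $H$ act on the character lattice of $H^0$, sending weight spaces to weight spaces; if $r>1$ this exhibits $H$ as imprimitive (or, if some $W_i$ is itself $H$-stable, reducible), contradiction. If $r=1$ then $H^0$ is scalar, back to step (2).

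The step I expect to be the main obstacle is the careful handling of the case distinction around $H/H^0$: Lemma~\ref{lem:irred_primitive_G} requires $G/H$ finite \emph{and cyclic}, and for a general irreducible primitive $H$ (not assumed to be a Galois group in this lemma's statement) cyclicity of $H/H^0$ is not automatic. I would circumvent this by using instead Proposition~\ref{prop:irred_primitive}: $H$ is irreducible and primitive, $n$ is prime, and $K:=D(H)\triangleleft H$; if $D(H)$ were composed only of dilatations then the hypothesis ``$K$ is not composed only of dilatations'' of Proposition~\ref{prop:irred_primitive} fails, so that proposition does not directly apply, but its \emph{proof} shows that a normal subgroup of an irreducible primitive group with $n$ prime which is reducible must in fact be composed of dilatations — so I would argue: $D(H)$ is normal in $H$; if it is irreducible it cannot be scalar (an irreducible group in $\GL_n$ with $n\geq 2$ is never contained in the dilatations), contradiction; if it is reducible, the argument in the proof of Proposition~\ref{prop:irred_primitive} forces it to be scalar — which is consistent — so this route alone is inconclusive and I genuinely need the diagonalizability analysis of step (3) together with the observation that a connected diagonalizable normal subgroup either is central-scalar or forces imprimitivity. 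Reconciling these and making the scalar-center case genuinely contradict irreducibility (via Lemma~\ref{lem:irred_primitive_G} after checking $G/G^0$ is finite cyclic \emph{in the Galois-group application}, or via a direct triangularization argument) is where the care is needed.
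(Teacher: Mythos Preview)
Your proposal is incomplete, and you correctly identify the sticking points yourself; but more importantly, it takes a needlessly indirect route. The paper's proof bypasses $H^{0}$, Lie--Kolchin, and the whole structural apparatus entirely, working instead with a \emph{single element}. Here is the idea: if $D(H)$ consists of dilatations, then for all $\mathscr{G},\mathscr{H}\in H$ one has $\mathscr{G}\mathscr{H}=\alpha\,\mathscr{H}\mathscr{G}$ for some scalar $\alpha$. By irreducibility, $H$ is not contained in the dilatations, so pick any $\mathscr{H}_{0}\in H$ that is not scalar; let $\nu<n$ be the maximal dimension of an eigenspace of $\mathscr{H}_{0}$, and let $V$ be the sum of the $\nu$-dimensional eigenspaces. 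The relation $\mathscr{G}\mathscr{H}_{0}=\alpha\,\mathscr{H}_{0}\mathscr{G}$ shows that each $\mathscr{G}\in H$ sends eigenspaces of $\mathscr{H}_{0}$ to eigenspaces of $\mathscr{H}_{0}$ of the same dimension, so $H(V)=V$, whence $V=C^{n}$ by irreducibility. Thus $\nu\mid n$, and since $n$ is prime and $\nu<n$, we get $\nu=1$: the eigenspaces of $\mathscr{H}_{0}$ are $n$ lines $V_{1},\dots,V_{n}$ permuted by $H$, contradicting primitivity.

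Compared with this, your approach has two genuine gaps. First, the case ``$H^{0}$ consists of dilatations'' is not handled: you try to invoke Lemma~\ref{lem:irred_primitive_G}, but that lemma needs $H/H^{0}$ finite \emph{and cyclic}, and the present statement is about an arbitrary irreducible primitive subgroup, so cyclicity is unavailable. Second, even when $H^{0}$ is non-scalar, your claim that $H^{0}$ is diagonalizable is not justified: you correctly argue $D(H^{0})=\{1\}$ (connected, contained in the finite group of $n$-th roots of unity), so $H^{0}$ is connected abelian, but connected abelian linear algebraic groups can have nontrivial unipotent part (e.g.\ a one-parameter unipotent subgroup), and your parenthetical ``with trivial unipotent part on each factor'' is asserted without proof. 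Both issues evaporate once you abandon $H^{0}$ and argue, as the paper does, from the eigenspace decomposition of a single non-scalar element of $H$.
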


\begin{proof}
By contradiction, assume that $D(H)$ is composed only of dilatations. In this case, for all $\mathscr{G},\mathscr{H}\in H$, there exists $\alpha\in C^*$ such that $\mathscr{G}\mathscr{H}\mathscr{G}^{-1}\mathscr{H}^{-1} = \alpha \textnormal{Id}_n$, that is  $\mathscr{G}\mathscr{H}=\alpha \mathscr{H}\mathscr{G}$. The group $H$ is irreducible so there exists $\mathscr{H}_0\in H$ having an eigenspace $W$ of maximal dimension $\nu<n$, otherwise $H$ would be composed of dilatations contradicting its irreducibility. Let $V$ be the direct sum of the eigenspaces of $\mathscr{H}_0$ of dimension $\nu$. From the previous equality and the fact that $\mathscr{H}_0(W)=W$, we obtain that 
\begin{equation}
    \label{eq:permut_eigenspace}
    \forall \mathscr{G}\in H,\hspace{0.15cm} \mathscr{H}_0(\mathscr{G}(W))=\mathscr{G}(W)\, .
\end{equation}
 Thus, $\mathscr{G}(W)$ is also  an eigenspace of $\mathscr{H}_0$  of dimension $\nu$ for all $\mathscr{G}\in H$ so $H(W)\subset V$. Therefore, $H(V)\subset V$ (in fact, taking the dimension, it is an equality of vector spaces) which implies that $V=C^n$ by irreducibility of $H$. Hence  $\nu$ divides $n$. Since $n$ is a prime number and $\nu<n$, we have $\nu=1$. This means that $\mathscr{H}_0$ is a diagonal matrix with distinct eigenvalues and we denote by $V_1,\ldots,V_n$ its eigenspaces. However, from \eqref{eq:permut_eigenspace} with $W:=V_1$, we obtain that every $\mathscr{G}\in H$ induces a permutation between the $V_i$, which contradicts the fact that $H$ is primitive.
\end{proof}

\begin{prop}\label{prop4}
    Assume that $n$ is prime and $G \subset \mathrm{GL}_n (C)$ is an irreducible and primitive group. Then, the same holds for $G^0$ and $D(G^0)$ and we may write $G^0=Z(G^0).D(G^0)$ where $Z(G^0)$ denotes the center of $G^0$.
\end{prop}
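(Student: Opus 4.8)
The plan is to establish the three assertions — irreducibility and primitivity of $G^0$, irreducibility and primitivity of $D(G^0)$, and the decomposition $G^0 = Z(G^0).D(G^0)$ — in that order, exploiting the prime hypothesis on $n$ and the structural lemmas already at our disposal.

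First I would show $G^0$ is irreducible. Since $G/G^0$ is finite (and cyclic) by Theorem \ref{theo:reduced form}, and $G^0 \triangleleft G$, Lemma \ref{lem:irred_primitive_G} (with $H := G^0$) tells us that $G^0$ is not composed only of dilatations. Now apply Proposition \ref{prop:irred_primitive} with $H := G$ and $K := G^0$: the hypotheses are exactly that $n$ is prime, $G$ is irreducible, $G^0 \triangleleft G$, $G^0$ is not only dilatations, and $G$ is primitive; the conclusion is that $G^0$ is irreducible. Primitivity of $G^0$ is then immediate from Lemma \ref{lem:connected+irred}, since $G^0$ is connected (being the identity component) and we have just shown it is irreducible.

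Next I would treat $D(G^0)$, repeating the same two-step pattern one level down. Since $G^0$ is irreducible and primitive (just established) and $n$ is prime, Lemma \ref{lem:irred_primitive_G^0} gives that $D(G^0)$ is not composed only of dilatations. Moreover $D(G^0) \triangleleft G^0$ (the derived group is normal, indeed characteristic). So we may invoke Proposition \ref{prop:irred_primitive} again, this time with $H := G^0$ and $K := D(G^0)$: all hypotheses hold ($n$ prime, $G^0$ irreducible and primitive, $D(G^0) \triangleleft G^0$ not only dilatations), yielding that $D(G^0)$ is irreducible. Then $D(G^0)$ is connected (derived subgroup of a connected group) and irreducible, so Lemma \ref{lem:connected+irred} makes it primitive as well.

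Finally, for the decomposition $G^0 = Z(G^0).D(G^0)$: this is a general structural fact about connected linear algebraic groups whose image in some $\GL_n$ is irreducible. The key point is that an irreducible connected group $G^0 \subset \GL_n(C)$ is reductive (it has no nontrivial normal unipotent subgroup, since the fixed space of such a subgroup would be a proper invariant subspace by the Lie–Kolchin theorem), hence $G^0 = Z(G^0)^{\circ}.D(G^0)$ by the standard structure theory of reductive groups, and $Z(G^0)^{\circ}$ already spans the center modulo $D(G^0)$; writing $Z(G^0)$ for the full center only enlarges the first factor so the equality $G^0 = Z(G^0).D(G^0)$ persists. I expect this last step to be the only one requiring an external citation to the theory of reductive groups rather than a result internal to the paper; the main obstacle, if any, is simply checking that reductivity applies, which follows cleanly from irreducibility via Lie–Kolchin. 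The first two parts are essentially a mechanical cascade through Lemmas \ref{lem:irred_primitive_G}, \ref{lem:connected+irred}, \ref{lem:irred_primitive_G^0} and Proposition \ref{prop:irred_primitive}, so I anticipate no real difficulty there.
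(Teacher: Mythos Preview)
Your proposal is correct and follows essentially the same approach as the paper: the cascade through Lemmas \ref{lem:irred_primitive_G}, \ref{lem:connected+irred}, \ref{lem:irred_primitive_G^0} and Proposition \ref{prop:irred_primitive} is exactly what the paper does, in the same order and with the same choices of $H$ and $K$. For the decomposition $G^0 = Z(G^0).D(G^0)$, the paper simply cites \cite[Corollary 8.1.6]{springer1998linear} without spelling out the reductivity argument you give, so your treatment of that step is, if anything, slightly more explicit than the paper's.
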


\begin{proof}
We recall that $G^0$ is a connected normal subgroup of $G$ and $D(G^0)$ is a connected normal subgroup of $G^0$.

From Lemma \ref{lem:irred_primitive_G} (with $H:=G^0$), $G^0$ is not composed only of dilatations. Thus, by Proposition \ref{prop:irred_primitive} (with $H:=G$ and $K:=G^0$), $G^0$ is irreducible. Using Lemma \ref{lem:connected+irred}, $G^0$ is also primitive. 
Then, the irreducibility of $D(G^0)$ follows from Lemma \ref{lem:irred_primitive_G^0} (with $H:=G^0$) and Proposition \ref{prop:irred_primitive} (with $H:=G^0$ and $K:=D(G^0)$). Thanks to Lemma \ref{lem:connected+irred}, $D(G^0)$ is also primitive.

By \cite[Corollary 8.1.6]{springer1998linear}, any connected algebraic group $H$ satisfies $H\sim Z(H). D(H)$, which proves that $G^0=Z(G^0).D(G^0)$. 
\end{proof}

\section{Galois groups of equations of order one and diagonal systems}\label{sec:GaloisGpsOrderOneEtDiag}

\subsection{Galois groups of equations of order one}\label{secorder1}

We consider the equation of order one
\begin{equation}
\label{eq:orderone}
\phi(y)=\alpha y \, \quad \alpha\in \k^*.
\end{equation}

Let us now see how to compute the Galois group $G$ of this difference equation. Recall that since $G\subset \mathrm{GL}_1 (C)$ is an algebraic group, either $G$ is $C^*$, or it is isomorphic to $\Z/\ell\Z$ for some $\ell \in \N^*$.

\begin{prop}\label{propord1}
 We have $G\sim \Z/\ell\Z$ if and only if there exists $\zeta$, a primitive $\ell$th root of unity and  a nonzero solution $f\in \k^*$ of  $\phi(y)=(\zeta/\alpha)y$.  In that situation,  $\zeta=\alpha\frac{\phi(f)}{f}$, and $[\zeta]$ is a reduced form of $[\alpha]$ with a gauge transformation $(f)$.  
\end{prop}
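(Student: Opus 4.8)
The statement concerns the Galois group $G$ of the order-one equation $\phi(y)=\alpha y$, and it has both an iff-characterization of when $G\sim\Z/\ell\Z$ and a description of the reduced form. I would prove the two directions of the equivalence separately, then extract the reduced-form statement from the construction.

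First, the forward direction. Suppose $G\sim\Z/\ell\Z$ with $\ell$ minimal. By Theorem~\ref{theo:reduced form}, since $\Z/\ell\Z$ is an algebraic subgroup of $\GL_1(C)=C^*$ containing $[\alpha]$ trivially (we may take $\widetilde G=C^*$ first, then pass to a minimal subgroup), there exists $f\in\GL_1(\k)=\k^*$ and $B\in G(\k)$ such that $B=\alpha\frac{\phi(f)}{f}$ and the system $[\alpha]$ is equivalent to $[B]$ over $\k$; moreover $B\in G(\k)$ means $B$ is an $\ell$th root of unity lying in $\k$, hence in $C$ (as the only constants of $\k$), and since $[B]$ is reduced, the algebraic group generated by $B$ must be all of $G\sim\Z/\ell\Z$, forcing $B$ to be a \emph{primitive} $\ell$th root of unity $\zeta$. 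Setting $\zeta=\alpha\frac{\phi(f)}{f}$, this says precisely that $\phi(f)=(\zeta/\alpha)f$, i.e.\ $f$ is a nonzero solution in $\k^*$ of $\phi(y)=(\zeta/\alpha)y$, as desired.

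Second, the converse. Suppose $\zeta$ is a primitive $\ell$th root of unity and $f\in\k^*$ satisfies $\phi(f)=(\zeta/\alpha)f$. Then the gauge transformation $(f)$ sends $[\alpha]$ to $[\phi(f)\alpha f^{-1}]=[\zeta]$, so $[\alpha]$ and $[\zeta]$ are equivalent over $\k$ and thus have conjugate (here, equal) Galois groups. It remains to compute the Galois group of $\phi(y)=\zeta y$ directly: a Picard--Vessiot ring is obtained by adjoining a solution $u$ with $\phi(u)=\zeta u$; since $\zeta$ is a primitive $\ell$th root of unity, $\phi(u^\ell)=\zeta^\ell u^\ell=u^\ell$, so $u^\ell\in C_{\L}=C$, and one checks (using that the only difference ideals are trivial, equivalently that $u$ is transcendental modulo this relation, or directly that $u^\ell$ is a unit scalar) that $\L=\k[u]/(u^\ell-c)$ for some $c\in C^*$; the Galois group acts by $u\mapsto \omega u$ with $\omega^\ell=1$, and since $\ell$ is the smallest such integer for which $\zeta^\ell=1$ one gets $G\sim\Z/\ell\Z$. (Alternatively, invoke Theorem~\ref{theo:reduced form}: $[\zeta]$ with $\zeta\in C^*$ a primitive $\ell$th root of unity is already in reduced form because no proper algebraic subgroup of $\langle\zeta\rangle\cong\Z/\ell\Z$ can contain $\zeta$, so $G\sim\langle\zeta\rangle\sim\Z/\ell\Z$.) The reduced-form statement then falls out: the gauge transformation $(f)$ puts $[\alpha]$ in the form $[\zeta]$, which we have just argued is reduced, with $\zeta=\alpha\frac{\phi(f)}{f}$.

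**Main obstacle.** The routine part is the gauge-transformation bookkeeping. The only genuinely substantive point is showing that in the forward direction the resulting constant $B$ is forced to be a \emph{primitive} $\ell$th root of unity rather than merely some $\ell$th root of unity — this is where one must use the minimality/reduced-form clause of Theorem~\ref{theo:reduced form} (the algebraic group generated by $B$, being the Galois group of the reduced system $[B]$, must equal $G$), together with the fact that $\langle\omega\rangle$ has order strictly less than $\ell$ when $\omega$ is a non-primitive $\ell$th root of unity. I would state this carefully and otherwise keep the computation of the Picard--Vessiot ring for $[\zeta]$ as brief as possible, leaning on Theorem~\ref{theo:reduced form} to avoid constructing $\L$ by hand.
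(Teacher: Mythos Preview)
Your proof is correct. The forward direction is essentially the paper's. For the converse you take a genuinely different route: you gauge to $[\zeta]$ and argue directly that $[\zeta]$ is already reduced with group $\langle\zeta\rangle\cong\Z/\ell\Z$, using the last clause of Theorem~\ref{theo:reduced form} (since $\widetilde G=\langle\zeta\rangle$ consists of constants, any $T\in\widetilde G(\k)$ has $\phi(T)=T$, so $\phi(T)\zeta T^{-1}=\zeta$, which lies in no proper subgroup of $\langle\zeta\rangle$). The paper instead first gets $G\subset\Z/\ell\Z$ from Theorem~\ref{theo:reduced form} and then rules out $G\cong\Z/\ell_1\Z$ with $\ell_1\mid\ell$, $\ell_1<\ell$, by a comparison trick: applying the already-proved forward direction produces a second gauge $g\in\k^*$ with $\alpha\,\phi(g)/g=\zeta_1$ a primitive $\ell_1$th root of unity, and one checks that $(f/g)^\ell$ is $\phi$-invariant, hence lies in $C$; as $C$ is algebraically closed this forces $f/g\in C^*$, whence $\zeta=\zeta_1$ and $\ell=\ell_1$. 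Your argument is shorter and stays entirely inside Theorem~\ref{theo:reduced form}; the paper's makes the use of algebraic closedness of $C$ visible.

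One caution: your Picard--Vessiot sketch for $[\zeta]$ is not complete as written---showing $u^\ell\in C$ does not by itself exclude a relation $u^m\in\k$ for some $m<\ell$, and ruling this out is exactly the content of the proposition. So you should rely on your Theorem~\ref{theo:reduced form} alternative, which is clean and fully rigorous.
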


\begin{proof}
Since the Galois group $G$ of \eqref{eq:orderone} is an algebraic subgroup of $C^{\star}$, it is a finite cyclic group or the multiplicative group $C^{\star}$. 
Any equivalent equation of \eqref{eq:orderone} is of the form $\phi(y)=\alpha\frac{\phi(f)}{f}y$ with $f\in\k^{\star}$. 
Therefore, from Theorem \ref{theo:reduced form}, if $G$ is a finite cyclic group of order $\ell\in\mathbb{N}^*$ then there exist $f\in\k^{\star}$ and $\zeta$, a primitive $\ell$th root of unity,  such that $\zeta= \alpha\frac{\phi(f)}{f}$. \par Conversely, assume that there exist $f\in\k^{\star}$ and $\zeta$, a primitive $\ell$th root of unity,  such that $\zeta= \alpha\frac{\phi(f)}{f}$. From Theorem \ref{theo:reduced form}, $G$ is contained in $\Z/\ell\Z$. Then,  
$G\sim \Z/\ell_1\Z$ where $\ell_1$ divides $\ell$. Then, there exist $g\in\k^{\star}$ and $\zeta_1$, a primitive $\ell_1$th root of unity,  such that $\zeta_1= \alpha\frac{\phi(g)}{g}$. This implies $\alpha^{\ell_1}=g^{\ell_1}/\phi(g^{\ell_1})$. Let $h:=(f/g)^{\ell}$. Thus, $\phi(h) = h$ so $h\in C^*$. Since $C$ is algebraically closed, there exists $c\in C^*$ such that $f=cg$ (with $c^{\ell} = h$). Using this equality, we obtain $\zeta = \zeta_1$ and since $\zeta$ is a primitive $\ell$th root of unity, $\ell=\ell_1$.
\end{proof}

\begin{rem}
Since $\det(G)$ is the Galois group of $\phi (y)=\det (A)y$, an equation of order $1$, we may now compute $\det(G)$ if we are able to reduce the  difference equations of order $1$.
\end{rem}

\begin{ex}\label{ex3}
Consider the difference equation \eqref{eq0} and let $G$ be its difference Galois group over $\C(z^{1/*})$. Assume that $t\neq 0$. We want to compute $\det(G)$.  We have to compute the difference Galois group of $\phi(y)=\frac{-q^3 z}{t}y=\alpha y$ over $\C(z^{1/*})$.  By Proposition~\ref{propord1},  $\det(G)$ is finite if and only if there exists $\zeta$, a primitive $\ell$th root of unity, and a nonzero solution $f\in \k^*$ of  $\phi(y)=(\zeta/\alpha)y$.  But the valuation of $\alpha$ at $z=0$ is $1$, proving that $\zeta/\alpha$ must have valuation $-1$. Since for $f\in\k^*$,  $f$ and $\phi(f)$ have the same valuation,  the equation $\phi(f)=(\zeta/\alpha)f$ with $f\in\k^*$, and $\zeta$ a root of unity,  has no solutions.  Therefore,  $\det(G)=\C^*$.
\end{ex}

\subsection{Galois groups of diagonal systems}\label{sec:diagonalCase}

For more details on what follows, we refer to \cite[Section 2.2]{van2006galois}.
Let us consider a diagonal system 
\begin{equation}
\label{eq:diagsyst}
\phi(Y)=\mathrm{Diag}\left(\alpha_1,\ldots,\alpha_n\right) Y 
\end{equation}
where $\mathrm{Diag}\left(\alpha_1,\ldots,\alpha_n\right)\in\GL_n\left(\k\right)$ is the diagonal matrix whose nonzero entry at the line $i$ is $\alpha_i$.  
By Theorem~\ref{theo:reduced form}, the Galois group is an algebraic subgroup of the $n$-dimensional torus $T$.  The matrices of $G$ are of the form $\mathrm{Diag} (c_1, \dots , c_n)$.  We recall that a character on $T$ is a morphism $\chi:T\rightarrow C^{\star}$. It is of the form   $\chi_{\mathbf{m}}(z_1,\ldots,z_n)=z_1^{m_1}\ldots z_n^{m_n}$ with $\mathbf{m}:=\left(m_1,\ldots,m_n\right)\in\mathbb{Z}^n$. Since an algebraic subgroup of $T$ is the intersection of the kernels of some characters on $T$, we have to find the $\mathbf{m}\in\mathbb{Z}^n$ such that $c_1^{m_1}\ldots c_n^{m_n}=1$ for all  $\mathrm{Diag} (c_1, \dots , c_n) \in G$. 
By Theorem~\ref{theo:reduced form}, a gauge transformation which reduces the system will be diagonal, denoted by  $\mathrm{Diag}(f_1,\dots,f_n)$. Thus, the difference Galois group of \eqref{eq:diagsyst} is the intersection of the kernels of the $\chi_{\mathbf{m}}$ for all $\mathbf{m}\in\mathbb{Z}^n$ such that there exist $f_1,\ldots, f_n\in\k^*$ satisfying
\begin{equation}
\label{eq:conditionsSystDiag}
 \left(\frac{\phi(f_1)}{f_1}\alpha_1 \right)^{m_{1}}\times\ldots \times \left(\frac{\phi(f_n)}{f_n} \alpha_n \right)^{m_{n}}= 1\,.   
\end{equation} 
The question of how to solve  \eqref{eq:conditionsSystDiag} depends on the examples (see one example in \cite[Section 2.2]{van2006galois}). In general it is a complicated task.

\section{Galois groups of equations of order two}\label{secord2}
Consider a difference equation of order $2$
$$  \phi^2 (y)+a_1 \phi (y)+a_0 y=0, \quad \text{where}\quad  a_i\in \k\quad \text{and}\quad  a_0  \neq 0.$$
 Let 
 $$
 L :=   \phi^2 +a_1 \phi +a_0\in \k\left< \phi,\phi^{-1}\right>
 $$ 
be the corresponding difference operator and let $G\subset \GL_{2}(C)$ be its Galois group.  We want to compute $G$.  An algorithm in the cases (Q), (S), (E), (M) is presented in \cite{hendriks1997algorithm, hendriks1998algorithm,  dreyfus2015galois,roques2018algebraic} respectively.   

\subsubsection*{Reducible case}
We first decide whether $L$ has a right factor of order one.  This is the case if and only if $G$ is reducible.  Recall, see Lemma \ref{lem:eq left factor},  that it is equivalent to the existence of  a solution $\alpha$ of the Riccati equation $\phi(y)y+a_1y+a_0=0$. \par 
Assume first that $L$ is reducible and let us write $L=(\phi-\beta)(\phi-\alpha)$.  Then, $\alpha$ is a solution of the Riccati equation  $\phi(y)y+a_1y+a_0=0$, and $\beta$ is determined by $\alpha \beta=a_0$.  The difference equation is equivalent to $\phi(Y)=AY$ where $A=\left(\begin{array}{cc}
\alpha &  1\\
0 & \beta
\end{array}\right)
$. 
 We now follow the strategy of Section \ref{sec:reducible case}.  As in the diagonal case, we reduce the diagonal system $\phi(Y)=\left(\begin{array}{cc}
\alpha &  0\\
0 & \beta
\end{array}\right)Y
$ with a diagonal  matrix  $D$.  Let $B:=\phi(D)AD^{-1}$,  with $B=\left(\begin{array}{cc}
\alpha' &  \gamma'\\
0 & \beta'
\end{array}\right)$.   By \cite[Lemma 4.4]{hendriks1998algorithm}, either $[B]$ is already reduced, or a reduced form is $\left( \begin{array}{cc}
\alpha' &  0\\
0 & \beta'
\end{array}\right)$. 
We now have to decide if there exists $t\in \k$ such that $T=\left(\begin{array}{cc}
1 & t\\
0 & 1
\end{array}\right)$ satisfies $\phi(T)B=\left( \begin{array}{cc}
\alpha' &  0\\
0 & \beta'
\end{array}\right)T$.  Note that this is equivalent to 
$$  \gamma'+ \phi(t) \beta'= \alpha' t.$$
So we just have to decide if an equation of order $1$ has a solution $t$ in $\k$ or not.  
If such a $t$ exists,  then the reduction matrix is given by $TD$,  and a reduced form is $\left( \begin{array}{cc}
\alpha' &  0\\
0 & \beta'
\end{array}\right)$. Otherwise,  the reduction matrix is given by $D$ and a reduced form is $\left(\begin{array}{cc}
\alpha' &  \gamma'\\
0 & \beta'
\end{array}\right)$. \par 
\subsubsection*{Imprimitive case} Assume now that $G$ is irreducible.  It is imprimitive if and only if the difference Galois group of $\phi^2 (Y)=A_{[2]}Y$ is diagonal. We use the reducible case (with $\phi$ replaced by $\phi^2$) to check this property.   In that case we are able to compute $d\in \k^*$,  solution of the Riccati equation corresponding to $\phi^2 (Y)=A_{[2]}Y$.  We now follow Proposition~\ref{prop2}.  The system   $[A]$ is equivalent to $\phi (Y)=\left( \begin{array}{cc}
0 & 1\\
d & 0
\end{array}\right) Y$. 
Let us   compute a reduced form of $\phi(y)=dy$, that is compute $f\in \k^*$ such that $\phi(y)=d\frac{\phi(f)}{f}y$ is reduced.  Then, a reduced form is given by 
$\left( \begin{array}{cc}
0 &  f^{-1}\\
\phi(f)d & 0
\end{array}\right) $ and a reduction matrix is given by $\mathrm{Diag}(1,f)$ .  \par 
\subsubsection*{Primitive case}  Assume that $G$  is irreducible and  primitive. By \cite[Section 4.4]{hendriks1998algorithm},  either  $G=\mathrm{GL}_2 (C)$, or there exists an integer $k$ such that 
$$
G = \lbrace M\in\GL_2\left(\mathbb{C}\right) \mid \det(M)^{k}=1 \rbrace .
$$
 Recall that $a_0=\det(A)$ and the algebraic group $\det(G)$ is the difference Galois group of $\phi(y)=a_0 y$, a system of order 1. Let $f\in \k^*$ be such that $\phi(y)=a_0\frac{\phi(f)}{f}y$ is reduced with Galois group $\det(G)$.
Then, we perform, as in \cite[Section 4.4]{hendriks1998algorithm}, the gauge transformation  $T=\mathrm{Diag}(f,1)$ to obtain a system $[B]$.  Since $\det(B)=a_0\frac{\phi(f)}{f}$,   and $\phi(y)=a_0\frac{\phi(f)}{f}y$ is reduced with Galois group $\det(G)$,  we find that $B\in G(\mathbf{k})$.  Then,  $B$ is a reduced form and $T$ is the reduction matrix.  

\section{Galois groups of equations of order three}\label{secord3}

Consider a linear  difference equation of order $3$ 
$$  \phi^3 (y)+ a_2 \phi^2 (y)+a_1 \phi (y)+a_0 y=0, \quad \text{where}\quad  a_i\in \k\quad \text{and}\quad  a_0 \neq 0.$$
 Let 
 $$
 L :=  \phi^3 + a_2 \phi^2 +a_1 \phi +a_0\in \k\left< \phi,\phi^{-1}\right>
 $$ 
be the corresponding difference operator and let $G\subset \GL_{3}(C)$ be its Galois group.  We want to compute $G$. 
 
 \subsection{Reducible case}
 
 We follow the strategy explained in Section \ref{sec:reducible case} to treat the case where $G$ is reducible.  By \cite[Lemma~4.4]{adamczewski2021hypertranscendence}, $G$ is reducible if and only if the operator $L$ admits a right factor.  The right factor can have order one or two.  So $G$ is reducible if and only if the operator admits a right or left factor of order one.   Recall that up to consider the dual, we may reduce to the case where there is a right factor of order $1$.  \par 
Recall, see Lemma \ref{lem:eq left factor},  that the existence of a right factor of  order one is equivalent to the existence of  a solution $\alpha\in\k$ of the Riccati equation $\phi^2 (y)\phi(y)y+a_2 \phi (y)y+a_1y+a_0=0$.   
Assume that $L$ admits a right factor of order one.   Let us write $L$ as 
$\widetilde{L}(\phi -\alpha)$ where $\widetilde{L}:=\phi^2-\beta \phi-\gamma$. 
Then,  the system $[A_L]$ is equivalent to the system 
$$\phi (Y)= \left(\begin{array}{c|c}
\alpha &  1 \hspace*{0.3cm} 0  \\
\hline
0 &  A_2 
\end{array}\right)Y \, , $$
where the matrix $A_2$ corresponds to the equation of order two $\widetilde{L}y=0$ (that is $A_2 = A_{\widetilde{L}}$).  
From the order one and order two cases, we know how to compute $T_1\in \GL_1 (\k)$ and $T_2\in \GL_2 (\k)$ such that the systems whose matrices are $\phi(T_1)\alpha T_1^{-1}$ and $\phi(T_2)A_2 T_2^{-1}$ are in reduced form. Let us consider 
$$A := \begin{pmatrix}
\phi(T_1)&0 \\ 
0 &\phi(T_2)
\end{pmatrix}\left(\begin{array}{c|c}
\alpha &  1 \hspace*{0.3cm} 0  \\
\hline
0 &  A_2 
\end{array}\right)\begin{pmatrix}
T_1&0 \\ 
0 &T_2
\end{pmatrix}^{-1}.$$
This matrix  $A$ is of the form
$\begin{pmatrix}
\phi(T_1)\alpha T_1^{-1}&\star \\ 
0 &\phi(T_2)A_2 T_2^{-1}
\end{pmatrix}$. 
The first step consists in reducing the bloc diagonal system $[D]$ where $D:=\begin{pmatrix}
\phi(T_1)\alpha T_1^{-1}&0 \\ 
0 &\phi(T_2)A_2 T_2^{-1}
\end{pmatrix}$. Let $\Gdiag$ be the Galois group of this system, let $G_1$ be the Galois group of $[\phi(T_1)\alpha T_1^{-1}]$ and let $G_2$ be the Galois group of $[\phi(T_2)A_2 T_2^{-1}]$.  
The algebraic group $\Gdiag$ is an algebraic subgroup of $G_1\times G_2$. We explain the reduction on a case-by-case approach with respect to $G_2$. We will see that in each situation, reducing $[D]$ is equivalent to reducing a diagonal system whose reduction is explained in Section \ref{sec:diagonalCase}. Theorem \ref{thm:reduc_blocdiag} summarize the results we are going to prove for this first step.

  \begin{thm}
  \label{thm:reduc_blocdiag}
$\bullet$ Assume that $G_2$ is composed of diagonal matrices.  We can assume that $D$ is diagonal.  Then, we have to reduce a diagonal system and the reduction is explained in Section \ref{sec:diagonalCase}.\\
$\bullet$  Assume that $G_2$ is reducible and the first case does not hold.  We can assume that $D$ is upper triangular.  Then,   the reduction matrix is the diagonal matrix $T$ that reduces the diagonal  system $[\mathrm{Diag}(a,b,d)]$, where $a,b,d$ are the diagonal entries of $D$.  \\
$\bullet$ Assume that $G_2$ is imprimitive and irreducible.  We can assume that $\phi(T_2)A_2 T_2^{-1}$ is an anti-diagonal matrix.  Then $D_{[2]}$ is diagonal.  The reduction matrix that reduces $[D]$ is the matrix $T$ that reduces the diagonal  $\phi^{2}$-system $\phi^2 (Y)=D_{[2]}Y$. \\
$\bullet$ Assume that $G_2$ is primitive and irreducible. The reduction matrix is given by  $T:=\mathrm{Diag}(r_1,r_2,1)$, where  $\mathrm{Diag}(r_1,r_2)$ is the reduction matrix of the diagonal system $\phi (Y)=\begin{pmatrix}
\phi(T_1)\alpha T_1^{-1}&0\\ 
0 &\det(\phi(T_2)A_2 T_2^{-1})
\end{pmatrix}Y$.  
  \end{thm}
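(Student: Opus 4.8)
The plan is to establish the four bullets one at a time, in each case first using the shape of the order-two Galois group $G_2$ to normalise $\phi(T_2)A_2T_2^{-1}$ — which, being in reduced form, lies in $G_2(\k)$ — and then recognising the reduction of $[D]$ as the reduction of an auxiliary (possibly iterated) diagonal system handled by Section~\ref{sec:diagonalCase}. A uniform ingredient is that, since $[D]$ is block diagonal with both blocks reduced, its Galois group $\Gdiag$ is a subdirect product of $G_1$ and $G_2$, embedded block-diagonally in $\GL_3(C)$; by Theorem~\ref{theo:reduced form}, to check that a gauge transformation $T$ reduces $[D]$ it then suffices to check that $\phi(T)DT^{-1}$ lies in $\Gdiag(\k)$ and that no proper algebraic subgroup of $\Gdiag$ is reachable from it.

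For the first bullet there is nothing beyond the normal form: $G_2$ diagonal makes $D$ diagonal. For the second bullet, $G_2$ reducible and, the first case having failed, genuinely non-diagonal makes $D$ upper triangular with a single off-diagonal entry, sitting inside the already reduced $2\times 2$ block; I would show that a diagonal gauge transformation $T$ reducing $[\mathrm{Diag}(a,b,d)]$ already reduces $[D]$, using Lemma~\ref{lem:Gauge transfo reduc} and Remark~\ref{rem1}: the only further reduction they allow is by a unipotent matrix, but that would either shrink the torus part — impossible, $T$ having already reduced the diagonal system — or kill the off-diagonal entry, which would put us back in the first case. For the third bullet, the order-two imprimitive analysis (Section~\ref{secord2}) lets us take $\phi(T_2)A_2T_2^{-1}$ anti-diagonal, so $D$ has a block-(scalar, anti-diagonal) shape and $D_{[2]}=\phi(D)D$ is diagonal; one reduces the diagonal $\phi^2$-system $\phi^2(Y)=D_{[2]}Y$ with a diagonal $T$, notes that $\phi(T)DT^{-1}$ keeps the same shape and has reduced iterate, and concludes that $\phi(T)DT^{-1}$ is itself reduced as a $\phi$-system by combining Lemma~\ref{lem2} (so $\Gdiag^{0}=\Gdiag_{[2]}^{0}$ is the reduced diagonal torus) with the fact that the anti-diagonal block is essential, arguing as in the imprimitive discussion of Section~\ref{sec:imprimitive case} (Proposition~\ref{prop1}, Lemma~\ref{lem5}) and Lemma~\ref{lem:gaugeTransfoEquiv}.

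For the fourth bullet, $G_2$ primitive forces $G_2\supseteq\SL_2(C)$, hence $D(G_2)=\SL_2(C)$; since $G_1\subset C^*$ is abelian, the subdirect product $\Gdiag\subset G_1\times G_2$ must contain $\{1\}\times\SL_2(C)$, so the $\SL_2(C)$ sitting in the $2\times 2$ block cannot be shrunk by any gauge transformation. Consequently $\Gdiag$ is governed only by the order-one character and the determinant of the $2\times 2$ block, i.e.\ by the Galois group of the diagonal system $\phi(Y)=\mathrm{Diag}(\phi(T_1)\alpha T_1^{-1},\det(\phi(T_2)A_2T_2^{-1}))Y$; if $\mathrm{Diag}(r_1,r_2)$ reduces this system, I would check that $T:=\mathrm{Diag}(r_1,r_2,1)$ brings $\phi(T)DT^{-1}$ into $\Gdiag(\k)$ in reduced form, using that the $\mathrm{GL}_2$ case imposes no constraint on determinants while in the finite-determinant case $\det(\phi(T_2)A_2T_2^{-1})$ is a root of unity that the reduction of the auxiliary system leaves in place.

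I expect the fourth bullet to be the main obstacle: the informal statement ``once the $\SL_2$-part is fixed, only the order-one character and the determinant remain'' has to be turned into a rigorous identification of the reduction of $[D]$ with that of the associated $2\times 2$ diagonal system, which needs both the surjectivity of $\Gdiag$ onto its two factors and a precise understanding of $G_1\times G_2$ modulo $\{1\}\times\SL_2(C)$. The secondary difficulty is the careful passage between $\phi$- and $\phi^2$-systems in the third bullet, where one must pass the reducedness of the iterated diagonal system back to the original $\phi$-system.
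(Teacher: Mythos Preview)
Your proposal is correct and follows the same approach as the paper in all four bullets, including your identification of the two genuine difficulties (the $\phi$/$\phi^2$ passage in bullet three, handled in the paper by comparing the orders of the cyclic quotients $H/\Gdiag^0$ and $\Gdiag/\Gdiag^0$, and the determinant map in bullet four, made precise via $\xi:(A,B)\mapsto(A,\det B)$ with kernel $\{1\}\times\SL_2$). The one place worth sharpening is bullet two: your dichotomy ``shrink the torus part or kill the off-diagonal entry'' is not quite a dichotomy, and the paper closes this by invoking \cite{nguyen2008algebraic} to the effect that for any $c,c'\neq 0$ the matrices $\left(\begin{smallmatrix} b & c \\ 0 & d\end{smallmatrix}\right)$ and $\left(\begin{smallmatrix} b & c' \\ 0 & d\end{smallmatrix}\right)$ generate the same algebraic group, so once the diagonal is reduced the system is already reduced.
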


Let us  now prove the theorem. 

\begin{proof}
$\bullet$  Assume that $G_2$ is  formed by diagonal matrices. Then, $D$ is diagonal,  $\Gdiag$ is an algebraic group of diagonal matrices and the reduction is explained in Section \ref{sec:diagonalCase}. 

\medskip
$\bullet$
  Assume that  $G_2$ is reducible but not formed by diagonal matrices.   Up to a conjugation, we may assume that $G_2$ is formed by upper triangular matrices. By the classification of such nondiagonal reducible algebraic groups  that can be found in \cite[Lemma 4.4]{hendriks1998algorithm},  the upper triangular entry can be any  element of $C^*$.  
We can assume that the matrix $D$ is of the form $\begin{pmatrix}
a&0&0 \\ 
0 &b&c \\
0&0&d
\end{pmatrix}$ with $c\neq 0$.  Let us first reduce the diagonal system $[\mathrm{Diag}(a,b,d)]$. This will change the elements $a,b,c,d$ but, simplifying notations, let us denote by $a,b,c,d$ the new ones.  With Remark~\ref{rem1} and Theorem \ref{theo:reduced form},  we are looking for a reduction matrix of the form $T:=\begin{pmatrix}
1&0&0 \\ 
0&1&t \\
0&0&1
\end{pmatrix}$.
We denote by $[D']$ a reduced system, $D' = \begin{pmatrix}
a&0&0 \\ 
0 &b&c' \\
0&0&d
\end{pmatrix} $ for a $c'\in\k$. 
The only non-trivial equality in $\phi(T)D = D'T $ is on the $(2,3)$ entry: 
$$ 
c+  d \phi(t)  =  bt +c'.$$
We have $c'\neq 0$, otherwise the gauge transformation $\begin{pmatrix}
1&t \\
0&1
\end{pmatrix}$
would transform $[\phi(T_2)A_2 T_2^{-1}]$ into a diagonal system, a contradiction. 
By \cite{nguyen2008algebraic}, for all $c'\neq 0$, the algebraic group generated by   $ \begin{pmatrix}
b&c \\
0&d
\end{pmatrix} $  and $ \begin{pmatrix}
b&c' \\
0&d
\end{pmatrix} $ are the same. Since the $(1,1)$ entry of the algebraic group generated by $\begin{pmatrix}
\star&0&0 \\ 
0&\star&\star \\
0&0&\star
\end{pmatrix}$ does not affect the $(2,3)$ entry, we deduce that the algebraic group generated by $\begin{pmatrix}
a&0&0 \\ 
0&b&c\\
0&0&d
\end{pmatrix}$ and 
$\begin{pmatrix}
a&0&0 \\ 
0&b&c'\\
0&0&d
\end{pmatrix}$ are the same so  we may take $t=0$ and $c'=c$ and the system is already reduced. 

\medskip

$\bullet$ When $G_2$ is irreducible and imprimitive,  up to a conjugation, it is of the form 
 $G_2=G_{2,\mathcal{D}}\cup G_{2,\mathcal{A}}$, where $G_{2,\mathcal{D}}$ is formed by diagonal matrices and $G_{2,\mathcal{A}}$ by anti-diagonal matrices. 
We then have that  $\Gdiag$ is a union of diagonal matrices and   matrices of the form $\begin{pmatrix}
\star&0&0 \\ 
0 &0&\star \\
0&\star&0
\end{pmatrix}$.
We can assume that the matrix $D$ is of the form $\begin{pmatrix}
a&0&0 \\ 
0 &0&b \\
0&c&0
\end{pmatrix}$ since otherwise $G_2$ would be reducible.    

\begin{rem}
Note that $D_{[2]}$ is a diagonal matrix.  By Lemma \ref{lem2}, the connected components of the identity of the difference Galois groups of $[D]$ and $[D_{[2]}]$ are the same.  So if we are only interested in computing the connected component of $G$ we may just reduce the diagonal system $[D_{[2]}]$, see Section~\ref{sec:diagonalCase} for more details.
\end{rem}

Assuming we know how to compute the Galois group of a diagonal system,  we are able to compute a reduction  matrix $P\in \mathrm{GL}_{3}(\k)$ such that the system $\phi^2(Y) = \phi^2(P)D_{[2]}P^{-1}Y $ is reduced.  Consider $B:=\phi(P)DP^{-1}$, we have $B_{[2]}=\phi^2(P)D_{[2]}P^{-1}$ thus $[B_{[2]}]$ is reduced.    Let us prove that the system $[B]$ is already reduced.
  Let $H$ be the smallest algebraic group such that $B\in H(\k)$.  We have $\Gdiag\subset H$ (property of Galois groups, see Theorem~\ref{theo:reduced form}).
By Lemma~\ref{lem2},  the Galois group of the $\phi^2$-system $[B_{[2]}]$ and the Galois group of the $\phi$-system $[B]$ have the same connected component.  Since $[D_{[2]}]$  and $[B_{[2]}]$  have the same Galois group, and $[B_{[2]}]$  is reduced  we find 
$H^0=\Gdiag^0$.  We recall that $H$ is the smallest algebraic group such that $B\in H(\k)$. Then, $H/H^0=H/\Gdiag^0$ is generated by a single element, so it is cyclic (and finite since it is an algebraic group,  see \cite[Lemma 3.5]{hardouin2016galois}). 
Since $\Gdiag/\Gdiag^0$ is finite cyclic, 
$$
\Gdiag /\Gdiag^0 \sim  \Z/ m' \Z , \quad 
H/\Gdiag^0 \sim  \Z/ n' \Z,  
$$
for integers $m',n'$.   
 With Theorem \ref{theo:reduced form} and Lemma \ref{lem2}, there exists $k$ such that $B_{[k]}\in  \Gdiag^0 (\k)$.  
 
Since, $\Gdiag\subset H$,  we have   $m'\leq n'$.  By Theorem~\ref{theo:reduced form}, there exists $P_1\in H(\k)$ such that $B_{red}=\phi(P_1)BP_{1}^{-1}\in \Gdiag (\k)$ is reduced.  
 Since $H$ is an algebraic group defined over $C$, the equation over $C$ between the entries of $B$ and  $\phi^{k}(B)$, $k\in \Z$, are the same. Then, the class of $\phi^{k}(B)$ in $H/\Gdiag^0$ is independent of $k$. 
 
 Then, the  integer $m'$ (resp. $n'$) is the smallest such that we have the inclusion ${B_{red,[m']}=\phi^{m'-1}(B_{red})\times \dots \times B_{red}
\in \Gdiag^0 (\k)}$ (resp.  $B_{[n']}
\in \Gdiag^0 (\k)$).  Since $\Gdiag^0$ is composed of diagonal matrices and $B,B_{red}$ are not diagonal (otherwise $G_2$ would be reducible),  we find that $m'=2m$, $n'=2n$ for some integers $m,n$. 
We have $B_{red,[2m]}=\phi^{2m}(P_{1})B_{[2m]}P_{1}^{-1} \in \Gdiag^0(\k)$.
 Since $\Gdiag^0 $ is the smallest algebraic group such that  $B_{[2n]}\in \Gdiag^0 (\k)$
  and since $[B_{[2]}]$ is reduced as a $\phi^2$-system,  we have  $n\leq m$,  proving that $m'=n'$.  Then $\Gdiag=H$ and $[B]$ is already reduced. 
 
\medskip
  
$\bullet$  When $G_2$ is irreducible and primitive,  by  \cite[Section 4.4]{hendriks1998algorithm},  either  $G_2=\mathrm{GL}_2 (C)$, or there exists an integer $k$ such that 
$$
G_2 = \lbrace M\in\GL_2\left(\mathbb{C}\right) \mid \det(M)^{k}=1 \rbrace .
$$
We recall that $\det(G_2)$ is the difference Galois group of $\phi(Y)=dY$ with $d:=\det \left(\phi(T_2)A_2 T_2^{-1}\right)$.  
Let $R:=\mathrm{Diag}(r_1,r_2)$ be a reduction matrix of  the bloc diagonal system
 $\phi (Y)=BY$ where $B:=\begin{pmatrix}
\phi(T_1)\alpha T_1^{-1}&0\\ 
0 &d
\end{pmatrix}$.  
Then,  
$$
\phi(R)BR^{-1} = \mathrm{Diag}(\underbrace{\phi(r_1T_1)\alpha (T_1r_1)^{-1}}_{:=B_1}, \underbrace{\phi(r_2)d r_2^{-1}}_{:=B_2})\, .
$$
Let us prove that  $T:=\mathrm{Diag}(r_1,r_2,1)$ is a reduction matrix of $[D]$. Let $ H$ be the minimal algebraic group such that $B_{red}:=\phi(T)D T^{-1}\in H(\k)$. We set $B_{red}=\begin{pmatrix}
B_{red,1} & 0 \\ 0 & B_{red,2}
\end{pmatrix}$, with  $B_{red,\star}$ of convenient size. By construction, we have $B_{red,1} = B_1$ and $\det(B_{red,2}) = B_2$.
Note that $\Gdiag$ is a subgroup of  $G_1\times \det(G_2)  .\SL_2(C)$, where the identification $G_2\sim \det(G_2)  .\SL_2(C)$ is made via the map $\mathscr{M}= \mathrm{Diag}(\det(\mathscr{M}),1)\mathscr{M}'$, with $\det(\mathrm{Diag}(\det(\mathscr{M}),1))=\det(\mathscr{M})$, and $\mathscr{M}'\in \mathrm{SL}_2(C)$.   
Furthermore, since  $G_2$ is irreducible and primitive,  $\Gdiag$ contains $\{1\}\times \SL_2(C)$.  Consider the  map 
$$
\begin{array}{rccl}
\xi :  &   G_1 (\k) \times  \det(G_2). \SL_2 (\k) & \rightarrow & (\k^{*})^2 \\
     & (A,B) & \mapsto & (A,\det(B)) \, .
\end{array}
$$
 Let $ \{1\}\times \SL_2\subset H_1\subsetneq H_2\subset G_1\times G_2$ be two algebraic groups.   Since the kernel of $\xi$ is $ \{1\}\times \SL_2(\k)$,  we have $\xi(H_1(\k))\subsetneq \xi(H_2(\k))$.   Furthermore,  $\xi$ is injective on the set of  algebraic subgroups of  $(G_1\times G_2)(\k)$  containing $ \{1\}\times \SL_2(\k)$. \par 
By  Theorem \ref{theo:reduced form},  we find that $\Gdiag\subset H$ so that we have $\xi(\Gdiag(\k))\subset \xi(H(\k))$.  Since $R:=\mathrm{Diag}(r_1,r_2)$ is  a reduction matrix of  the bloc diagonal system
 $\phi (Y)=BY$, we have 
  $\xi(H(\k))\subset \xi(\Gdiag(\k))$,   and then $\xi(\Gdiag(\k))= \xi(H(\k))$.
From the above injectivity property,   $\Gdiag(\k)=H(\k)$. 
Then $T:=\mathrm{Diag}(r_1,r_2,1)$ is a reduction matrix of $[D]$.
\end{proof} 

  Thus, in all cases, we should be able to reduce the block diagonal system $[D]$ and find an associated reduction matrix, denoted by $T'$.

\bigskip

The next step consists in reducing the system $[A]$.  We will see that finding the reduction matrix is equivalent to finding vector solutions in $\k$ of $\phi$-linear systems with coefficients in $\k$ that we will explicit. We summarize these results in Theorem \ref{thm:reduc_systcomplet}. We consider
$$B:=\phi(T')A T'^{-1}.
$$
The matrix $B$ is of the form $ \begin{pmatrix}
D_1& \widetilde{A} \\ 
0 &D_2
\end{pmatrix}$. By construction, the difference Galois group of $[B]$ is $G$, so let us compute the latter.  We write $B_{\mathcal{D}}:=\begin{pmatrix}
D_1&0 \\ 
0 & D_2
\end{pmatrix}$.  
Recall that  $\Gdiag$ is the Galois group of $[B_{\mathcal{D}}]$,  which is a reduced system thanks to the first step.
The Galois group $G$ is conjugated to a subgroup of 
$$\left\{ \begin{pmatrix}
\mathscr{C}_1& \begin{matrix} c_1& c_2 \end{matrix} \\ 
0 & \mathscr{C}_2
\end{pmatrix}\Bigg|   \begin{pmatrix}
\mathscr{C}_1& 0 \\ 
0 & \mathscr{C}_2
\end{pmatrix} \in \Gdiag,   c_1,c_2\in C \right\} =\Gdiag \ltimes U,
$$
where $U=\left\{ \begin{pmatrix}
1& c_1& c_2 \\ 
0 & 1 &0 \\
0&0&1
\end{pmatrix}\Bigg|   c_1,c_2\in C \right\}$  is isomorphic to two copies of the additive group.  

By Lemma \ref{lem:Gauge transfo reduc}, there exists $T:=\begin{pmatrix}
1& X \\ 
0 & \mathrm{Id}_2
\end{pmatrix}\in \GL_3 (\k)$
such that $[\phi(T)BT^{-1}]$ is reduced. 
 Let  $F:=\phi(T)BT^{-1}$.
 We have $F=  \begin{pmatrix}
D_1& A' \\ 
0 & D_2
\end{pmatrix}$ where $A' = - XD_1 +\widetilde{A}+\phi(X)D_2 $, that is, 
$$\widetilde{A}+\phi(X)D_2=XD_1 +A'.$$
Let $\Gunip=G \cap U$.  It is an algebraic subgroup of $U$ so it is of dimension $0$,  $1$ or $2$.

\begin{lem}
\label{lem:dimGu0}
The dimension of the algebraic group $\Gunip$ is $0$ if and only if there exists $X\in\k^2$ such that 
\begin{equation}
\label{eq:eqverif}
    \widetilde{A}+\phi(X)D_2=XD_1.
\end{equation}
In this case $[F]= [B_{\mathcal{D}}]$ is a reduced system equivalent to $[B]$ with the gauge transformation $T:=\begin{pmatrix}
1& X \\ 
0 & \mathrm{Id}_2
\end{pmatrix}$ and the Galois group $G$ is $\Gdiag$.
\end{lem}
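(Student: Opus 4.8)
## Proof proposal for Lemma \ref{lem:dimGu0}

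The plan is to reduce the statement to the triviality of $\Gunip$ and then to read off the reduced form. First I would recall, as in the proof of Lemma~\ref{lem:Gauge transfo reduc}, that since $[B_{\mathcal{D}}]$ is reduced the projection of $\Gdiag\ltimes U$ onto $\Gdiag$ killing the upper-right block identifies $\Gdiag$ with $G/\Gunip$, and that $\Gunip=G\cap U$ is a closed subgroup of $U\cong\mathbb{G}_a^2$. Over a field of characteristic zero a $0$-dimensional (equivalently finite) closed subgroup of $\mathbb{G}_a^2$ is trivial, so $\dim\Gunip=0$ is equivalent to $\Gunip=\{1\}$; thus the lemma amounts to saying that $\Gunip=\{1\}$ if and only if \eqref{eq:eqverif} has a solution $X\in\mathbf{k}^2$, in which case $[F]=[B_{\mathcal{D}}]$ and $G=\Gdiag$.

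The reverse implication is the easy one. Given $X\in\mathbf{k}^2$ solving $\widetilde{A}+\phi(X)D_2=XD_1$, I would set $T:=\begin{pmatrix}1&X\\0&\mathrm{Id}_2\end{pmatrix}$ and compute directly that $\phi(T)BT^{-1}=B_{\mathcal{D}}$, using the identity $A'=-XD_1+\widetilde{A}+\phi(X)D_2$ recalled before the statement. Hence $[B]$ is equivalent over $\mathbf{k}$ to the block-diagonal system $[B_{\mathcal{D}}]$; since $T\in\GL_3(\mathbf{k})$ is fixed by $\phi$, the difference Galois group of $[B]$ computed through the corresponding fundamental matrix equals the one of $[B_{\mathcal{D}}]$, namely $\Gdiag$, which is block-diagonal. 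Therefore $\Gunip=\Gdiag\cap U=\{1\}$, and $[F]=[B_{\mathcal{D}}]$ is a reduced system equivalent to $[B]$ via $T$, with Galois group $\Gdiag$.

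For the direct implication I would start from a reduction matrix $T=\begin{pmatrix}1&X\\0&\mathrm{Id}_2\end{pmatrix}$ with $X\in\mathbf{k}^2$, whose existence in this block-unitriangular shape is Lemma~\ref{lem:Gauge transfo reduc} (case $m=1$, $n=3$); writing $F=\phi(T)BT^{-1}=\begin{pmatrix}D_1&A'\\0&D_2\end{pmatrix}$ with $A'=-XD_1+\widetilde{A}+\phi(X)D_2$, it remains to show that $\Gunip=\{1\}$ forces $A'=0$, that is, that $X$ already solves \eqref{eq:eqverif}. Here I would argue as in the differential analogue \cite[Theorem~2.4]{dreyfus2019computing}: the Picard--Vessiot ring $L$ of $[F]$ contains the Picard--Vessiot ring $L_{\mathcal{D}}$ of its block-diagonal part $[B_{\mathcal{D}}]$, and $\Gunip$ is exactly the difference Galois group of $L$ over $L_{\mathcal{D}}$, so $\Gunip=\{1\}$ forces $L=L_{\mathcal{D}}$; this in turn forces the inhomogeneous $\phi$-equation governing the upper-right block to admit a solution over $\mathbf{k}$, i.e. \eqref{eq:eqverif} to be solvable, and then the reduced form is block-diagonal, $A'=0$, $[F]=[B_{\mathcal{D}}]$ and $G=\Gdiag$.

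The main obstacle is precisely this last implication in the direct direction: translating $L=L_{\mathcal{D}}$ into solvability of \eqref{eq:eqverif} over $\mathbf{k}$ requires descending a solution of a $\phi$-linear system from the Picard--Vessiot ring down to $\mathbf{k}$, equivalently ruling out that $G$ be a ``twisted'' copy of $\Gdiag$ inside $\Gdiag\ltimes U$ with a nonzero residual block. I expect this to be handled exactly along the lines of \cite[Section~5]{dreyfus2019computing} adapted to the difference setting, as is already done for the reducible case in Section~\ref{sec:reducible case}; everything else is the single matrix identity displayed before the statement.
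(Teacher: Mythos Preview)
Your reverse implication is essentially the paper's argument (modulo a slip: $T$ is not ``fixed by $\phi$'', rather it is fixed by every $\sigma\in G$ because its entries lie in $\mathbf{k}$; this is what makes the representations of $G$ through $[B]$ and through $[B_{\mathcal{D}}]$ coincide).

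For the direct implication you overshoot and then get stuck at a self-imposed obstacle. The paper's proof is one line, using Theorem~\ref{theo:reduced form}: the $T$ provided by Lemma~\ref{lem:Gauge transfo reduc} is a \emph{reduction} matrix, so $[F]$ is in reduced form and hence $F\in G(\mathbf{k})$. If $\dim\Gunip=0$ then $\Gunip=\{1\}$ (a $0$-dimensional subgroup of $\mathbb{G}_a^2$ in characteristic zero is trivial), so $G=\Gdiag$, which is block diagonal; therefore $F\in\Gdiag(\mathbf{k})$ is block diagonal, i.e.\ $A'=0$, and the very $X$ coming from $T$ solves \eqref{eq:eqverif}. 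No Picard--Vessiot tower, no descent from $L_{\mathcal{D}}$ to $\mathbf{k}$, no appeal to \cite{dreyfus2019computing} is needed.

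Your proposed route via $L=L_{\mathcal{D}}$ is not wrong in spirit, but the step ``$L=L_{\mathcal{D}}$ forces the inhomogeneous $\phi$-equation to be solvable over $\mathbf{k}$'' is exactly the nontrivial descent you flag as unfinished, and it is entirely bypassed by remembering that $F$ already sits in $G(\mathbf{k})$.
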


\begin{proof}
From Theorem \ref{theo:reduced form}, the dimension of $\Gunip$ is $0$ if and only if $A' = (0\quad 0) $.
\end{proof}

\begin{lem}
\label{lem:dimGu1}
Assume that the dimension of $\Gunip$ is not $0$. 
 If the dimension of $\Gunip$ is $1$ then $D_2$ is a triangular matrix (lower or upper triangular, including the diagonal matrices).
If it is the case, we can assume that $D_2$ is a lower triangular matrix and three cases occur:

$\bullet$ Case where $D_2$ is lower triangular but not diagonal. In this case 
$$G =  \left\{ \begin{pmatrix}
\mathscr{C}_1& \begin{matrix} c&0 \end{matrix} \\ 
0 & \mathscr{C}_2
\end{pmatrix}\Bigg|   \begin{pmatrix}
\mathscr{C}_1& 0 \\ 
0 & \mathscr{C}_2
\end{pmatrix} \in \Gdiag,   c\in C \right\}.
$$

$\bullet$ Case where $D_2$ is diagonal but not a dilatation.  In this case, $G$ is the same group as before or $\left\{ \begin{pmatrix}
\mathscr{C}_1& \begin{matrix} 0&c \end{matrix} \\ 
0 & \mathscr{C}_2
\end{pmatrix}\Bigg|   \begin{pmatrix}
\mathscr{C}_1& 0 \\ 
0 & \mathscr{C}_2
\end{pmatrix} \in \Gdiag,   c\in C \right\}$, depending if there is a solution in $\k$ of some linear equations with coefficients in $\k$ (see equations \eqref{eqaverifier} or \eqref{eqaverifierbis}). 

$\bullet$ Case where $D_2$ is a dilatation.  In this case,
$$G =  \left\{ \begin{pmatrix}
\mathscr{C}_1& \begin{matrix} \lambda c & \mu c \end{matrix} \\ 
0 & \mathscr{C}_2
\end{pmatrix}\Bigg|   \begin{pmatrix}
\mathscr{C}_1& 0 \\ 
0 & \mathscr{C}_2
\end{pmatrix} \in \Gdiag,   c\in C \right\}.
$$
where $\lambda, \mu\in C^2 \setminus \{(0,0)\}$ can be explicited, see \eqref{eq:eqverif3}.

In each case,  the proof gives an  explicit gauge transformation and a reduced system.
\end{lem}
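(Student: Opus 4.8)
The plan is to analyse $\Gunip=G\cap U$. Since the characteristic is zero, $\mathbb{G}_a^2$ has no nontrivial finite subgroups, so $\Gunip$ is a $C$-linear subspace of $C^2$; moreover it is the kernel of the projection $G\to\Gdiag$ onto the diagonal part, hence normal in $G$. Assume $\dim\Gunip=1$ and write $\Gunip=C\,(\lambda,\mu)$ with $(\lambda,\mu)\neq(0,0)$. The first goal is to force $D_2$ to be triangular. Inside $\Gdiag\ltimes U$, a block-diagonal element $\mathrm{Diag}(\mathscr{C}_1,\mathscr{C}_2)$ conjugates an off-diagonal row vector $v$ to $\mathscr{C}_1\,v\,\mathscr{C}_2^{-1}$; normality of $\Gunip$ therefore gives $(\lambda,\mu)\mathscr{C}_2^{-1}\in C(\lambda,\mu)$ for every $\mathscr{C}_2$ in the Galois group $G_2$ of the order-two block $[D_2]$ (it differs from $[\phi(T_2)A_2T_2^{-1}]$ only by a gauge transformation, so the group is the same). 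Thus $(\lambda,\mu)$ spans a $G_2$-stable line, so $G_2$ is reducible; but in Theorem~\ref{thm:reduc_blocdiag} the reducible case of $G_2$ produces only a diagonal or an upper triangular $D_2$, so $D_2$ is triangular. Conjugating the whole system by the constant matrix $\mathrm{Diag}\!\left(1,\begin{pmatrix}0&1\\1&0\end{pmatrix}\right)$ — which preserves the block shape and the reducedness of $[B_{\mathcal{D}}]$, turns upper into lower triangular, and swaps the two coordinates of $U$ — we may assume $D_2$ is lower triangular.

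Next I would use the partial-reduction equation: with $T=\begin{pmatrix}1&X\\0&\mathrm{Id}_2\end{pmatrix}$ and $X=(x_1,x_2)\in\k^2$, one has $A'=\widetilde A+\phi(X)D_2-XD_1$, i.e. $XD_1-\phi(X)D_2=\widetilde A-A'$. Writing $D_1=\delta$ and $D_2=\begin{pmatrix}a&0\\c&b\end{pmatrix}$, this splits into two inhomogeneous order-one equations: the second, $\delta x_2-b\,\phi(x_2)=\tilde a_2-a'_2$, involves only $x_2$, and once $x_2$ is fixed the first, $\delta x_1-a\,\phi(x_1)=\tilde a_1+c\,\phi(x_2)-a'_1$, involves only $x_1$. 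By the standing hypotheses we can decide solvability in $\k$ of such equations and produce solutions, so the strategy is to choose $X$ cancelling every component of $A'$ that can be cancelled; if all of them can, then $A'=0$ and $\dim\Gunip=0$ by Lemma~\ref{lem:dimGu0}, contradicting $\dim\Gunip=1$, so a nonzero residual survives. I would then match this residual with the common left eigenvectors of the lower triangular group $G_2$:
\begin{itemize}
\item if $D_2$ is lower triangular and not diagonal, then $G_2$ has $(1,0)$ as its only common left eigenvector direction, so $\Gunip=\{(c,0)\mid c\in C\}$ and the residual is carried by $a'_1$;
\item if $D_2$ is diagonal but not a dilatation, the common left eigenvector directions are $(1,0)$ and $(0,1)$, and whichever of the two now decoupled order-one equations \eqref{eqaverifier}, \eqref{eqaverifierbis} fails to have a solution in $\k$ tells us which of $a'_1$, $a'_2$ survives, hence which of the two groups is $G$;
\item if $D_2=\rho\,\mathrm{Id}_2$ is a dilatation, then the Galois group of $[\rho\,\mathrm{Id}_2]$ consists of dilatations (the relation $c_1=c_2$ being always attainable, cf. Section~\ref{sec:diagonalCase}), so every row vector is a common left eigenvector; after removing all solvable parts the residual is of the form $\gamma\,(\lambda,\mu)$ with $\gamma\in\k$ and $(\lambda,\mu)\in C^2\setminus\{(0,0)\}$ read off from $\widetilde A$ as in \eqref{eq:eqverif3}, and $\Gunip=C(\lambda,\mu)$.
\end{itemize}
In each case the matrix $T=\begin{pmatrix}1&X\\0&\mathrm{Id}_2\end{pmatrix}$ built from the chosen $X$ is the announced gauge transformation and $F=\phi(T)BT^{-1}$ the reduced form.

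The step I expect to be the main obstacle is making the identification of $G$ rigorous, that is, proving that $G$ is exactly the block-triangular group claimed (or the correct one among two, in the diagonal sub-case), rather than a proper sub-extension sitting over the same $\Gdiag$. The tool is the minimality characterization of reduced forms in Theorem~\ref{theo:reduced form}: any algebraic group strictly smaller than the announced one through which $[B]$ could be reduced either has intersection with $U$ strictly smaller than $\Gunip$ — forcing, by Lemma~\ref{lem:dimGu0} read in reverse, a further component of $A'$ to become cancellable, contrary to our choice of $X$ — or else is a non-split twist, which one rules out using that conjugating $F$ by a block-diagonal element of $\Gdiag(\k)$ only rescales the residual row vector (by an essentially arbitrary element of $\k^{*}$ when $G_1=C^{*}$) and so cannot change the direction it spans; combined with the explicit description of the $G_2$-stable lines above this pins down $\Gunip$, hence $G$. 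A further point to be careful about is that the dilatation sub-case genuinely produces an arbitrary direction $(\lambda,\mu)$, which is precisely why the reduced form there must carry the two extra parameters appearing in \eqref{eq:eqverif3}.
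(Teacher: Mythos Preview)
Your proposal is correct and follows essentially the same route as the paper: normality of $\Gunip$ in $G$ forces the generator $(\lambda,\mu)$ to be a common left eigenvector of $G_2$, hence $D_2$ is triangular, and then the same case split on the shape of $D_2$ together with solvability of the resulting order-one equations determines the gauge $T$ and the residual direction. Your worry about ``non-split twists'' is exactly the point the paper dispatches with the single line ``since $G=\Gdiag\ltimes\Gunip$'': once the inclusion $G\subset \Gdiag\ltimes C(\lambda,\mu)$ is established and $\dim\Gunip=1$ forces $\Gunip=C(\lambda,\mu)$, the surjection $G\twoheadrightarrow\Gdiag$ (Galois group of the block-diagonal quotient) gives equality by comparing dimensions and component groups, so no further minimality argument is needed.
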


\begin{proof}
Let $\mathscr{Y}$ be such that $h= \begin{pmatrix}
1&\mathscr{Y} \\ 
0 & \mathrm{Id}_2 
\end{pmatrix} \in \Gunip$.  
For $g=\begin{pmatrix}
a_g & \mathscr{C}_g \\ 
0 & \mathscr{B}_g  
\end{pmatrix} \in G$, let us compute $ghg^{-1}\in \Gunip$.  We write 
$ghg^{-1}= \begin{pmatrix}
1&\mathscr{Z}\\ 
0 & \mathrm{Id}_2 
\end{pmatrix} $,  so that we have $\begin{pmatrix}
a_g & \mathscr{C}_g \\ 
0 & \mathscr{B}_g  
\end{pmatrix}\begin{pmatrix}
1& \mathscr{Y} \\ 
0 & \mathrm{Id}_2 
\end{pmatrix} =\begin{pmatrix}
1&\mathscr{Z} \\ 
0 & \mathrm{Id}_2 
\end{pmatrix} \begin{pmatrix}
a_g & \mathscr{C}_g \\ 
0 & \mathscr{B}_g  
\end{pmatrix}$. 
Then,  $a_g \mathscr{Y} + \mathscr{C}_g = \mathscr{C}_g + \mathscr{Z}\mathscr{B}_g$ proving that
$ghg^{-1}= \begin{pmatrix}
1&a_g \mathscr{Y} \mathscr{B}_g^{-1} \\ 
0 & \mathrm{Id}_2 
\end{pmatrix}  \in \Gunip$.
If the dimension of $\Gunip$ is 1 then  $a_g \mathscr{Y} \mathscr{B}_{g}^{-1}=c\mathscr{Y}$ for some $c\in C^*$. Taking the transposition,  we find $(\mathscr{B}_g^{-1})^{t}\mathscr{Y}^{t}=c'\mathscr{Y}^t$ for some $c'\in C^*$. 
So $\mathscr{Y}^{t}$ is an eigenvector of any elements $(\mathscr{B}_g^{-1})^{t}$.  This means that the elements $\mathscr{B}_g$ are all conjugated to lower triangular matrices (or upper triangular matrices) and $Y^{t}$ is necessarily a common eigenvector. Since $[F]$ is a reduced system, $D_2$ is also a triangular matrix. Thus, if $\Gunip$ has dimension $1$ then $D_2$ is a triangular matrix.\\

Let us  assume that $\Gunip$ has dimension $1$ thus  $D_2$ is a triangular matrix and, up to conjugation, we can assume that $D_2$ is lower triangular. In what follows, we denote by $\widetilde{A}_1$ and $\widetilde{A}_2$ the entries of $\widetilde{A}$. We distinguish three cases:

$\bullet$ Case where $D_2$ is lower triangular but not diagonal. We set $D_2 = \begin{pmatrix}
d_1 & 0 \\ d_3 & d_4
\end{pmatrix}$. In this case, the common eigenvectors of the  elements $(\mathscr{B}_g^{-1})^t$ for $g\in G$ are  $\mathscr{Y}^{t}$ with $\mathscr{Y}:=\begin{pmatrix}
    f & 0 
\end{pmatrix}$ where $f\in C$, thus the system $[B]$ is equivalent to the system $[F]$ where $F$ is of the form $\begin{pmatrix}
D_1& \begin{matrix} f& 0 \end{matrix} \\ 
0 &D_2
\end{pmatrix}$, for a certain $f\in \mathbf{k}^*$, thanks to a gauge transformation $T:=\begin{pmatrix}
1& x_1& x_2 \\ 
0 & 1 &0 \\
0&0&1
\end{pmatrix}$.  Recall that $\widetilde{A}+\phi(X)D_2=XD_1+A'$.  Since the second entry of $A'$ has to be $0$,  we obtain 
\begin{equation}
    \label{eq:eqaverifierD2triang}
    \widetilde{A}_2+\phi(x_2)d_4= D_1 x_2.
\end{equation}
From Theorem \ref{theo:reduced form}, this implies that
$$G\subset \left\{ \begin{pmatrix}
\mathscr{C}_1& \begin{matrix} c&0 \end{matrix} \\ 
0 & \mathscr{C}_2
\end{pmatrix}\Bigg|   \begin{pmatrix}
\mathscr{C}_1& 0 \\ 
0 & \mathscr{C}_2
\end{pmatrix} \in \Gdiag,   c\in C \right\}.
$$
Since $G = \Gdiag\ltimes \Gunip$, we obtain that the previous inclusion is an equality.

$\bullet$ Case where $D_2$ is diagonal but not a dilatation.  We set $D_2 = \mathrm{Diag}(d_1,d_4)$. Up to a conjugation, we have $\Gunip=\left\{ \begin{pmatrix}
1& c&0 \\ 
0 & 1 &0 \\
0&0&1
\end{pmatrix}, c\in C \right\}$ or  $\Gunip=\left\{ \begin{pmatrix}
1& 0&c \\ 
0 & 1 &0 \\
0&0&1
\end{pmatrix}, c\in C \right\}$. 
Using the same notations as before, we obtain that $\Gunip$ is the first one, resp. the second one, if there exists a solution $y\in \k$ of the equation
\begin{align}
    \label{eqaverifier}
    \widetilde{A}_2 + \phi(y)d_4 = D_1y \, ,
\end{align}
resp. of the equation
\begin{align}
    \label{eqaverifierbis}
   \widetilde{A}_1 + \phi(y)d_1 = D_1y  \, .
\end{align}
Not that only one of these two equations has a solution in $\k$ because if it is not the case, the solutions $x_1, x_2$ are such that $X:=(x_1\quad x_2)\in\k^2$ is a solution of \eqref{eq:eqverif}, a contradiction since the dimension of $\Gunip$ is not $0$.
With the same reasoning as before, we obtain that the Galois group $G$ is 
$$\left\{ \begin{pmatrix}
\mathscr{C}_1& \begin{matrix} c & 0 \end{matrix} \\ 
0 & \mathscr{C}_2
\end{pmatrix}\Bigg|   \begin{pmatrix}
\mathscr{C}_1& 0 \\ 
0 & \mathscr{C}_2
\end{pmatrix} \in \Gdiag,   c\in C \right\} \text{ or } \left\{ \begin{pmatrix}
\mathscr{C}_1& \begin{matrix} 0&c \end{matrix} \\ 
0 & \mathscr{C}_2
\end{pmatrix}\Bigg|   \begin{pmatrix}
\mathscr{C}_1& 0 \\ 
0 & \mathscr{C}_2
\end{pmatrix} \in \Gdiag,   c\in C \right\}\, ,
$$
if $\Gunip$ is respectively the first or the second algebraic group.

$\bullet$ Case where $D_2$ is a dilatation. We set $D_2=d\mathrm{Id}_2$ with $d\in \mathbf{k}^*$. Let $\lambda,\mu\in C$ not all zero such that 
$\Gunip=\left\{ \begin{pmatrix}
1& \lambda c& \mu c \\ 
0 & 1 &0 \\
0&0&1
\end{pmatrix}, c\in C \right\}$. From Lemma \ref{lem:Gauge transfo reduc}, since $G = \Gdiag \ltimes \Gunip$, there exists $T:=\begin{pmatrix}
 1 & X \\ 0 & \mathrm{Id}_2
\end{pmatrix}$ such that $\phi(T)B=FT$,
 where $[F]$ is reduced with $F=\begin{pmatrix}
D_1& \begin{matrix} \lambda f& \mu f\end{matrix} \\
0 &D_2
\end{pmatrix}$, $f\in  \mathbf{k}^*$.
This is equivalent to
$$\widetilde{A}+\phi(X)D_2=D_1X+\begin{pmatrix}\lambda f & \mu f  \end{pmatrix}\, ,$$
that is,
$$
\left\lbrace\begin{array}{l}
     d \phi (x_1) = D_1 x_1 - \widetilde{A}_1 +\lambda f  \\
     d \phi (x_2) = D_1 x_2 - \widetilde{A}_2 +\mu f 
\end{array}\right.
$$
where $x_1,x_2$ are the coordinates of $X$. 
Taking $\mu$ times the first line minus $\lambda$ times the second,  we find that $x=\mu x_1-\lambda x_2$ is solution of $d\phi(x)=D_1 x- \mu \widetilde{A}_1 +\lambda  \widetilde{A}_2$. 
We then find that
 this is equivalent to the fact that the vector $Y:= \begin{pmatrix} \lambda\\ \mu \\ x \end{pmatrix}$ is solution of 
\begin{equation}
\label{eq:eqverif3}
    \phi(Y)=
 \begin{pmatrix}1 & 0 & 0  \\
0&1&0\\
\widetilde{A}_2d^{-1} & -\widetilde{A}_1 d^{-1} & D_1d^{-1}
 \end{pmatrix}Y.
\end{equation}
Conversely, any nonzero solution of the latter system in $\mathbf{k}^3$ will give a reduction matrix, and a reduced form.
\end{proof}

Let us summarize the results proved in Lemma \ref{lem:dimGu0} and Lemma \ref{lem:dimGu1}. 

\begin{thm}
\label{thm:reduc_systcomplet}
If the equation \eqref{eq:eqverif} has a solution then the Galois group $G$ is $\Gdiag$. If this equation has no solution then we have the following cases :
\begin{itemize}
    \item if $D_2$ is a triangular matrix but not diagonal, up to a conjugation we assume that it is lower triangular, and if the equation \eqref{eq:eqaverifierD2triang} has a solution then $$G = \left\{ \begin{pmatrix}
\mathscr{C}_1& \begin{matrix} c&0 \end{matrix} \\ 
0 & \mathscr{C}_2
\end{pmatrix}\Bigg|   \begin{pmatrix}
\mathscr{C}_1& 0 \\ 
0 & \mathscr{C}_2
\end{pmatrix} \in \Gdiag,   c\in C \right\}.
$$ 
    \item if $D_2$ is a diagonal matrix but not a dilatation and if \eqref{eqaverifier} (the same equation as \eqref{eq:eqaverifierD2triang})  or \eqref{eqaverifierbis} has a solution then $G$ is the previous group or respectively $$ \left\{ \begin{pmatrix}
\mathscr{C}_1& \begin{matrix} 0&c \end{matrix} \\ 
0 & \mathscr{C}_2
\end{pmatrix}\Bigg|   \begin{pmatrix}
\mathscr{C}_1& 0 \\ 
0 & \mathscr{C}_2
\end{pmatrix} \in \Gdiag,   c\in C \right\}.
$$ 
    \item if $D_2$ is a dilatation and \eqref{eq:eqverif3} has a solution then $$G = \left\{ \begin{pmatrix}
\mathscr{C}_1& \begin{matrix} \lambda c&\mu c\end{matrix} \\ 
0 & \mathscr{C}_2
\end{pmatrix}\Bigg|   \begin{pmatrix}
\mathscr{C}_1& 0 \\ 
0 & \mathscr{C}_2
\end{pmatrix} \in \Gdiag,   c\in C \right\} 
$$  where $\lambda, \mu\in C$ not all zero are given by a solution of \eqref{eq:eqverif3}. 
    \item if none of these three cases occur then $G=\Gdiag\ltimes U$ (in this case, the dimension of $\Gunip$ is $2$).
\end{itemize}
\end{thm}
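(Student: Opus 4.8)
The plan is to obtain Theorem~\ref{thm:reduc_systcomplet} as a bookkeeping reorganisation of Lemmas~\ref{lem:dimGu0} and~\ref{lem:dimGu1}, sorted by the dimension of the unipotent part $\Gunip=G\cap U$. Since $U\simeq\mathbf{G}_a^2$ and $\Gunip$ is an algebraic subgroup of $U$, that dimension is $0$, $1$ or $2$, and we always have $G=\Gdiag\ltimes\Gunip$ with $B$ in the block form $\left(\begin{smallmatrix}D_1&\widetilde A\\0&D_2\end{smallmatrix}\right)$ from the preceding discussion. So I would split into these three cases and, for each, read off the corresponding group and the corresponding solvability condition among \eqref{eq:eqverif}, \eqref{eq:eqaverifierD2triang}, \eqref{eqaverifier}, \eqref{eqaverifierbis}, \eqref{eq:eqverif3}.

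First I would dispose of the case $\dim\Gunip=0$: by Lemma~\ref{lem:dimGu0} this is equivalent to \eqref{eq:eqverif} admitting a solution $X\in\k^2$, and then $G=\Gdiag$, which is the first assertion of the theorem. From now on one may assume \eqref{eq:eqverif} has no solution, so $\dim\Gunip\in\{1,2\}$.

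Next I would handle $\dim\Gunip=1$. By Lemma~\ref{lem:dimGu1} this forces $D_2$ to be triangularisable; after conjugation I may take $D_2$ lower triangular, and I land in exactly one of the three sub-cases of that lemma ($D_2$ lower triangular but not diagonal, $D_2$ diagonal but not a dilatation, $D_2$ a dilatation). In each, Lemma~\ref{lem:dimGu1} already identifies which of \eqref{eq:eqaverifierD2triang}, \eqref{eqaverifier}/\eqref{eqaverifierbis}, \eqref{eq:eqverif3} has a solution in $\k$ and what $G$ is, so there is nothing to do but transcribe these statements. The only point meriting a sentence of care is the ``$D_2$ diagonal but not a dilatation'' sub-case, where I must recall from the proof of Lemma~\ref{lem:dimGu1} that \emph{exactly one} of \eqref{eqaverifier}, \eqref{eqaverifierbis} is solvable: if both failed, the two solutions would assemble into a solution $X=(x_1\ \ x_2)$ of \eqref{eq:eqverif}, contradicting $\dim\Gunip\neq0$.

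Finally, for the last bullet I would argue by elimination. If \eqref{eq:eqverif} has no solution and we are in none of the three sub-cases above --- either because $D_2$ is not triangularisable, or because it is but the relevant equation has no solution --- then by the previous paragraph $\dim\Gunip\neq1$; together with $\dim\Gunip\neq0$ this forces $\dim\Gunip=2$, i.e.\ $\Gunip=U$, hence $G=\Gdiag\ltimes U$. The main obstacle, such as it is, is purely organisational: checking that the four cases of the theorem are exhaustive and pairwise disjoint, i.e.\ that ``$D_2$ triangularisable but none of the equations solvable'' and ``$D_2$ not triangularisable'' are both correctly absorbed into the last bullet --- which is exactly what the dichotomy in Lemma~\ref{lem:dimGu1} guarantees. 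No genuinely new computation is required; the explicit gauge transformations $T=\left(\begin{smallmatrix}1&X\\0&\mathrm{Id}_2\end{smallmatrix}\right)$ and reduced forms $F=\phi(T)BT^{-1}$ promised in the theorem are the ones constructed in the proofs of the two lemmas.
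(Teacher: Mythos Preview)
Your proposal is correct and matches the paper's own approach: the paper simply states that Theorem~\ref{thm:reduc_systcomplet} summarises Lemmas~\ref{lem:dimGu0} and~\ref{lem:dimGu1}, and you have spelled out that summary in detail by casing on $\dim\Gunip\in\{0,1,2\}$. One small slip: in the diagonal-but-not-dilatation sub-case you write ``if both \emph{failed}, the two solutions would assemble into a solution of \eqref{eq:eqverif}'', but you mean if both \emph{succeeded}; it is the simultaneous solvability of \eqref{eqaverifier} and \eqref{eqaverifierbis} that would produce an $X$ solving \eqref{eq:eqverif} and force $\dim\Gunip=0$.
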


\subsection{Imprimitive case}
Assume now that $G$ is irreducible and treat the case where $G$ is imprimitive. This case is  solved in Section \ref{sec:imprimitive case}.
 It is imprimitive if and only if the difference Galois group of $\phi^3 Y=A_{[3]}Y$ is diagonal. We use the reducible case (with $\phi$ replaced by $\phi^3$) to check this property.   In that case we are able to compute $d\in \k^*$,  solution of the Riccati equation corresponding to $\phi^3 Y=A_{[3]}Y$.  We now follow Proposition \ref{prop2}. 
Let us   compute a reduced form of $\phi(y)=dy$, that is compute $f\in \k^*$ such that $\phi(y)=d\frac{\phi(f)}{f}y$ is reduced.  Then, a reduced form is given by 
$\left( \begin{array}{ccc}
0&1&0\\
0 &0&  f^{-1}\\
\phi(f)d & 0&0
\end{array}\right) $ with reduction matrix $\mathrm{Diag}(1,1,f)$.

\subsection{Primitive case}

Assume now that $G$ is irreducible and primitive.  Let $D(G^0)$ be the derived subgroup of $G^0$ that is irreducible and primitive by Proposition \ref{prop4}. By definition, $D(G^0)\subset \mathrm{SL}_{3}(C)$.   By \cite[Section 2.2]{singer1993galois}, either $D(G^0)$ is $\mathrm{SL}_3(C)$, or it is isomorphic to $\mathrm{PSL}_2(C)$.  Since $\mathrm{SO}_3(C)$ is also a primitive subgroup of  $\mathrm{SL}_3(C)$,  it is isomorphic to $\mathrm{PSL}_2(C)$.  
The matrices commuting with   $\mathrm{SL}_3(C)$  (resp. $\mathrm{SO}_3(C)$)  are dilatations and we find that $Z(G^0)$, the center of $G^0$, is isomorphic to an algebraic subgroup of  $C^*$.  Since it is connected, it is then $C^*$ or $\{1\}$. By Proposition~\ref{prop4}, $G^0=Z(G^0).D(G^0)$.  \par 
When $D(G^0)=\mathrm{SL}_3(C)$ we find that either $G^0=\mathrm{GL}_3(C)$, or 
$G^0=\mathrm{SL}_3(C)$. Since $G/G^0$ is cyclic, we deduce that  either $G=\mathrm{GL}_3(C)$, or there exists an integer $k$, which is the order of the group $G/G^0$, such that 
$G=\lbrace M\in\GL_3\left( C\right) \mid \det(M)^{k}=1 \rbrace $. Then $G=Z(G).D(G)$, where $Z(G)$ is composed by dilatations matrices and $D(G)=\mathrm{SL}_3(C)$.\par 
Assume now that $D(G^0)=\mathrm{SO}_3(C)$. Then  either $G^0 \sim C^* .\mathrm{SO}_3(C)$, or $G^0=\mathrm{SO}_3(C)$. Moreover, since $G/G^0$ is cyclic, it is abelian and since the derived group $D(G)$ is the smallest subgroup $H$ of $G$ such that $G/H$ is abelian, we have $D(G)\subset G^0$.
Then $\mathrm{SO}_3(C)=D(G^0)\subset D(G)\subset G^0 \subset C^*. \mathrm{SO}_3(C)$. The normalizer of $\mathrm{SO}_3(C)$ in $\mathrm{GL}_3(C)$ is $C^* . \mathrm{SO}_3(C)$ and the same holds for $C^*. \mathrm{SO}_3(C)$. Since the normalizer of $D(G)$ in $G$, which is $G$, is included in the normalizer of $D(G)$ in $\GL_3(C)$, we find that $G\subset C^*. \mathrm{SO}_3(C)$. Thus, $\mathrm{SO}_3(C)\subset G\subset C^*. \mathrm{SO}_3(C)$.
 Then, either $G\sim C^*. \mathrm{SO}_3(C)$, or there exists $k\in \N^*$, such that $G\sim U_k .\mathrm{SO}_3(C)$, where $U_k$ denote the group of all k-th roots of unity. Hence $G=Z(G).D(G)$, where $Z(G)$ is composed by dilatations matrices and $D(G)=\mathrm{SO}_3(C)$.
 To summary, we have proved the following result:
 \begin{prop}\label{prop5}
 Assume that $G$ is irreducible and primitive. Then, $G=Z(G).D(G)$, where $Z(G)$ is included in the group of scalar matrices and $D(G)$ is either $\mathrm{SL}_3(C)$ or $\mathrm{SO}_3(C)$. 
 \end{prop}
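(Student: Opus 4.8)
Much of the argument is already available in the discussion preceding the statement; the plan is to assemble it into a single clean proof. First I would invoke Proposition~\ref{prop4}: since $G$ is irreducible and primitive and $n=3$ is prime, the connected groups $G^0$ and $D(G^0)=[G^0,G^0]$ are both irreducible and primitive, and $G^0=Z(G^0).D(G^0)$. As a derived group, $D(G^0)$ lies in $\SL_3(C)$, so it is a connected irreducible primitive algebraic subgroup of $\SL_3(C)$. The essential input is then the classification of such groups in \cite[Section~2.2]{singer1993galois}: up to conjugation, $D(G^0)$ is either $\SL_3(C)$ or a copy of $\mathrm{PSL}_2(C)$; and since $\SO_3(C)$ is itself an irreducible primitive subgroup of $\SL_3(C)$ isomorphic to $\mathrm{PSL}_2(C)$ (via the $3$-dimensional representation $\mathrm{Sym}^2$ of $\SL_2$), in the second case I may conjugate so that $D(G^0)=\SO_3(C)$.

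Next I would determine $G^0$, then $G$, in each case. In both cases $D(G^0)$ acts irreducibly on $C^3$, so by Schur its centralizer in $\GL_3(C)$ is the group $C^*I$ of scalar matrices; hence $Z(G^0)$, being a connected subgroup of $C^*I$, is $\{I\}$ or $C^*I$. Thus $G^0$ is $\SL_3(C)$ or $\GL_3(C)$ in the first case, and $\SO_3(C)$ or $C^*.\SO_3(C)$ in the second. In the $\SL_3(C)$ case, $G$ normalizes $G^0\supseteq\SL_3(C)$, and since $N_{\GL_3(C)}(\SL_3(C))=\GL_3(C)$ we get $\SL_3(C)\subseteq G\subseteq\GL_3(C)$; as $G/G^0$ is finite cyclic (Theorem~\ref{theo:reduced form}), $G$ is either $\GL_3(C)$ or $\{M\in\GL_3(C)\mid\det(M)^k=1\}$ for $k=[G:G^0]$. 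In either case $D(G)=\SL_3(C)=D(G^0)$, and for $M\in G$ one writes $M=(cI)(c^{-1}M)$ with $c^3=\det(M)$, noting that $cI\in G$ (the determinant condition holds) and is scalar, hence central; so $G=Z(G).D(G)$ with $Z(G)\subseteq C^*I$.

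For the $\SO_3(C)$ case I would argue: since $G/G^0$ is abelian, $D(G)\subseteq G^0$, so $\SO_3(C)=D(G^0)\subseteq D(G)\subseteq G^0\subseteq C^*.\SO_3(C)$. Any element of $\GL_3(C)$ normalizing $D(G)$ normalizes its identity component, which contains $\SO_3(C)$, and $N_{\GL_3(C)}(\SO_3(C))=C^*.\SO_3(C)$; since $G$ normalizes $D(G)$, this gives $\SO_3(C)\subseteq G\subseteq C^*.\SO_3(C)$. Because $C^*I\cap\SO_3(C)=\{I\}$ and $\SO_3(C)\cong\mathrm{PSL}_2(C)$ is perfect, $D\bigl(C^*.\SO_3(C)\bigr)=\SO_3(C)$, whence $D(G)=\SO_3(C)$; and for $g=cM\in G$ with $c\in C^*$ and $M\in\SO_3(C)$ we have $cI=gM^{-1}\in G$, which is scalar and hence central, so $G=Z(G).D(G)$ with $Z(G)\subseteq C^*I$ (concretely $G$ is $C^*.\SO_3(C)$ or $U_k.\SO_3(C)$, where $U_k$ is the group of $k$th roots of unity).

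The two facts imported about the ambient group $\GL_3(C)$ carry the real content, and these are the steps I expect to be delicate to state precisely: the classification \cite[Section~2.2]{singer1993galois} of irreducible primitive subgroups of $\SL_3(C)$ together with the identification $\mathrm{PSL}_2(C)\cong\SO_3(C)$, and the normalizer computation $N_{\GL_3(C)}(\SO_3(C))=C^*.\SO_3(C)$. Everything else is bookkeeping with normalizers and with the cyclicity of $G/G^0$; one only has to be careful to perform all the ``up to conjugation'' reductions in one fixed basis so that $D(G^0)$, $G^0$ and $G$ are described compatibly.
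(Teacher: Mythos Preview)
Your proposal is correct and follows essentially the same approach as the paper's own argument, which is the discussion immediately preceding the proposition: invoke Proposition~\ref{prop4}, apply the classification from \cite[Section~2.2]{singer1993galois} to $D(G^0)$, use Schur's lemma to pin down $Z(G^0)$, and then in each of the two cases bound $G$ via the normalizer of $D(G)$ (or of $\SL_3$) in $\GL_3(C)$ together with the cyclicity of $G/G^0$. Your write-up is in fact slightly more explicit in places (e.g.\ the decomposition $M=(cI)(c^{-1}M)$ in the $\SL_3$ case and the use of perfectness of $\SO_3(C)$ to force $D(G)=\SO_3(C)$), but the skeleton and the key external inputs are identical.
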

\begin{rem}
If $D(G)=\mathrm{SL}_3(C)$, then
$ 
Z(G) = \lbrace\lambda \mathrm{Id}_3  \text{ where } \lambda^3\in \det(G) \rbrace\, .
$
Recall that  $\det(G)$ is the Galois group of an equation of order one $\phi (y)=\det (A)y$. 

Assume now $D(G)=\mathrm{SO}_3(C)$. If $\det(G)=C^*$, then $Z(G) = C^*$. If $\det(G)=U_n$ with $\gcd(n,3)\neq 1$, then $Z(G)=U_{3n}$. In the case $\det(G)=U_n$ with $\gcd(n,3)= 1$, we have $Z(G)=U_{3n}$ or $U_n$.
\end{rem}

We want to give a criterion to decide whether $D(G)=\mathrm{SO}_3(C)$ or not.  Let us prove the following lemma. 

 \begin{lem}\label{lem3}
Assume that $G$ is irreducible and primitive. Let  $\overline{\mathbf{k}}$ be the algebraic closure of $\k$. We may extend $\phi$ on the latter.   The following facts are equivalent. 
\begin{itemize}
\item  $D(G)=\mathrm{SO}_3(C)$.
\item there exist $T\in \mathrm{GL}_3(\k)$,  $\lambda \in \mathbf{k}^*$  and $B\in  \mathrm{SO}_3(\mathbf{k})$ such that   \begin{equation}\label{eq1}
 \phi(T)AT^{-1}=\lambda B.
 \end{equation}
\item there exist $T\in \mathrm{GL}_3(\overline{\mathbf{k}})$,  $\lambda \in \overline{\mathbf{k}}^*$  and $B\in  \mathrm{SO}_3(\overline{\mathbf{k}})$ such that  \eqref{eq1} holds. 
\end{itemize}
\end{lem}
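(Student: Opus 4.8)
The plan is to prove the three equivalences in a cycle. The key underlying principle is Theorem~\ref{theo:reduced form}: if $\widetilde G$ is an algebraic subgroup of $\GL_3(C)$ with $A\in\widetilde G(\k)$, then $G$ is conjugate to a subgroup of $\widetilde G$; and conversely the reduced form of $[A]$ lands inside $G(\k)$. The relevant $\widetilde G$ here is the group $C^*\cdot\SO_3(C)$, i.e. the scalar multiples of special orthogonal matrices. Note that the entries of such a matrix $\lambda B$ satisfy the defining equations of $C^*\cdot\SO_3$: $(\lambda B)(\lambda B)^t=\lambda^2\,\mathrm{Id}_3$ and $\det(\lambda B)=\lambda^3$, which is the same as saying $M M^t$ is a scalar matrix and $\det(M)^2 = (\tfrac13\operatorname{tr}(MM^t))^3$; these cut out $C^*\cdot\SO_3(C)$ as an algebraic subgroup of $\GL_3$.

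\emph{First implication: $D(G)=\SO_3(C)$ implies \eqref{eq1} over $\k$.} By Proposition~\ref{prop5}, if $D(G)=\SO_3(C)$ then $G=Z(G).D(G)\subset C^*\cdot\SO_3(C)=:\widetilde G$. Apply the second bullet of Theorem~\ref{theo:reduced form} to this $\widetilde G$: since $G$ is conjugate to a subgroup of $\widetilde G$, after conjugating we may assume $A\in\widetilde G(\k)$, and moreover there exists $T\in\GL_3(\k)$ with $\phi(T)AT^{-1}\in G(\k)\subset\widetilde G(\k)$. An element of $\widetilde G(\k)$ is exactly a matrix of the form $\lambda B$ with $\lambda\in\k^*$ and $B\in\SO_3(\k)$ (one recovers $\lambda$ up to sign from $MM^t=\lambda^2\mathrm{Id}_3$, and then adjusts the sign using $\det M=\lambda^3$; since $\k$ has characteristic zero and one works up to the replacement $B\mapsto -B$, $\lambda\mapsto-\lambda$ when needed — but $-\mathrm{Id}_3\notin\SO_3$ in odd dimension, so in fact the decomposition $M=\lambda B$ with $B\in\SO_3$ is well-defined once $\lambda$ is chosen with $\lambda^3=\det M$). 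This gives \eqref{eq1} over $\k$.

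\emph{Second implication: \eqref{eq1} over $\k$ implies the same over $\overline{\mathbf{k}}$.} This is trivial since $\GL_3(\k)\subset\GL_3(\overline{\mathbf k})$, $\k^*\subset\overline{\mathbf k}^*$ and $\SO_3(\k)\subset\SO_3(\overline{\mathbf k})$, and $\phi$ extends to $\overline{\mathbf k}$.

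\emph{Third implication: \eqref{eq1} over $\overline{\mathbf{k}}$ implies $D(G)=\SO_3(C)$.} Here I would argue as follows. If $\phi(T)AT^{-1}=\lambda B$ with $T\in\GL_3(\overline{\mathbf k})$, $\lambda\in\overline{\mathbf k}^*$, $B\in\SO_3(\overline{\mathbf k})$, then over $\overline{\mathbf k}$ the system $[A]$ is equivalent to $[\lambda B]\in(C^*\cdot\SO_3)(\overline{\mathbf k})$. By Theorem~\ref{theo:reduced form} (applied over the difference field $\overline{\mathbf k}$, which still has algebraically closed field of constants $C$ — one must invoke here the standing hypothesis that $\k$ has no nontrivial finite algebraic difference extension, so that in fact the relevant Galois group does not change when passing to $\overline{\mathbf k}$, or alternatively one uses that the Galois group over $\overline{\mathbf k}$ is a subgroup of the one over $\k$ and has the same identity component), the difference Galois group of $[A]$ over $\overline{\mathbf k}$ — call it $G'$ — is conjugate to a subgroup of $C^*\cdot\SO_3(C)$. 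Since the Galois group over $\k$ and over $\overline{\mathbf k}$ have the same identity component (finite algebraic difference extensions of $\k$ being trivial by hypothesis, passing to $\overline{\mathbf k}$ is a direct limit of such, hence does not change the connected component), we get $G^0=G'^0\subset C^*\cdot\SO_3(C)$ up to conjugation. Now $G$ is irreducible and primitive, so by Proposition~\ref{prop4} $D(G^0)$ is irreducible and primitive, hence nontrivial; and $D(G^0)\subset D(C^*\cdot\SO_3(C))=\SO_3(C)$. By Proposition~\ref{prop5}, $D(G)$ is either $\SL_3(C)$ or $\SO_3(C)$, and $D(G)\subset G^0$ forces $D(G)=D(G^0)\subset\SO_3(C)$; since $\SL_3(C)\not\subset\SO_3(C)$ this rules out $\SL_3(C)$, leaving $D(G)=\SO_3(C)$.

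\emph{Main obstacle.} The delicate point is the passage between the Galois group over $\k$ and over $\overline{\mathbf k}$ in the third implication: one needs to know that enlarging the base field to its algebraic closure does not shrink $D(G^0)$ below $\SO_3(C)$, i.e. that $G^0$ is unchanged. This is where the standing hypothesis that $\k$ is its only finite algebraic difference extension is used crucially, together with Lemma~\ref{lem2} (or rather its philosophy): the identity component is detected by a finite iterate, and finite iterates live over finite extensions. I would spell this out carefully, perhaps by first reducing to the case where $A$ already has the form $\lambda B$ over $\k$ itself — which, by the argument in the first implication run in reverse, should be arrangeable — thereby avoiding the base change altogether and making the equivalence of the first two bullets with the third nearly formal.
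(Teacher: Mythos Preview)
Your overall strategy is exactly that of the paper: use Theorem~\ref{theo:reduced form} together with Proposition~\ref{prop5} to get the equivalence of the first two items, note the trivial implication to $\overline{\k}$, and then for the hard direction compare the Galois group $G$ over $\k$ with the Galois group $\overline{G}$ over $\overline{\k}$ and use that $D(G)\in\{\SL_3(C),\SO_3(C)\}$ to rule out $\SL_3(C)$. Your treatment of the first two implications is correct (including the verification that $(C^*\cdot\SO_3)(\k)=\k^*\cdot\SO_3(\k)$ via $\lambda=\det(M)/\mu$).

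Where you hesitate is exactly where the paper spends its one nontrivial citation, and your proposed fixes there do not work. Your direct-limit argument (``finite algebraic difference extensions of $\k$ are trivial, $\overline{\k}$ is a direct limit of such'') is not valid: the finite subextensions of $\overline{\k}/\k$ need not be \emph{difference} extensions, so the standing hypothesis does not apply to them. And your final suggestion of ``first reducing to the case where $A$ already has the form $\lambda B$ over $\k$ itself'' is circular: that is precisely the second item, which you are trying to deduce from the third.

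The paper closes this gap in one line by invoking the Galois correspondence (citing \cite[Chapter~1]{van2006galois}): $\overline{G}$ is a closed subgroup of $G$ and $G/\overline{G}$ is finite. Since by Theorem~\ref{theo:reduced form} applied over $\overline{\k}$ one has $\overline{G}\subset C^*\cdot\SO_3(C)$, finiteness of $G/\overline{G}$ immediately excludes $D(G)=\SL_3(C)$ (a connected group cannot sit inside $C^*\cdot\SO_3(C)$ up to finite index unless it already does). This replaces both your direct-limit heuristic and your connected-component argument; once you cite this, your proof and the paper's coincide.
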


\begin{proof}
Recall that   $G\sim Z(G). D(G)$,  where $D(G)\in \{\mathrm{SO}_3(C),\mathrm{SL}_3(C) \}$ and $Z(G)$ is composed by dilatations matrices.  By Theorem~\ref{theo:reduced form},  $D(G)=\mathrm{SO}_3(C)$ if and only if 
there exist $T\in \mathrm{GL}_3(\k)$,  $\lambda \in \mathbf{k}^*$   and $B\in  \mathrm{SO}_3(\mathbf{k})$ such that  \eqref{eq1} holds.  So the first two items are equivalent.   If the second item holds, then the third item holds.  Conversely assume that the third item holds.  
Consider $\overline{G}$,  the Galois group over $\overline{\mathbf{k}}$.  By Theorem \ref{theo:reduced form},  $\overline{G}\subset \overline{C}^*. \mathrm{SO}_3(\overline{C})$. 
 By Galois correspondence, see \cite[Chapter 1]{van2006galois},  $\overline{G}$ is a subgroup of $G$ and $G/\overline{G}$ is  finite. Since $G\sim  Z(G). D(G)$,  where $D(G)\in \{\mathrm{SO}_3(C),\mathrm{SL}_3(C) \}$, this shows that $D(G)$ cannot be $\mathrm{SL}_3(C)$.  Then $D(G)= \mathrm{SO}_3(C)$. 
\end{proof}

The equation \eqref{eq1} is non linear and may be complicated to solve. Let us see how we may reduce to a linear equation over $\overline{\mathbf{k}}$.  
For the sake of completeness, let us first prove a well-known
result that will  be used in the sequel.
 
 \begin{lem}\label{lem4}
 Let $K$ be a field of characteristic zero.   
Let $X\in \mathrm{GL}_n (K)$ be a symmetric matrix. Then, there exist $F,D\in \mathrm{GL}_n (K)$ with $D$ diagonal such that $X=FDF^t$. 
 \end{lem}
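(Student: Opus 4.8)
The plan is to diagonalize the symmetric bilinear form attached to $X$ by the standard Gram–Schmidt / Lagrange reduction, keeping track of the fact that at every step we only need to divide by elements of $K$ that we have already shown to be nonzero. Concretely, I would proceed by induction on $n$. For $n=1$ there is nothing to prove: $X=(x)$ with $x\in K^*$, and we take $F=(1)$, $D=(x)$.

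For the inductive step, think of $X$ as the Gram matrix of a nondegenerate symmetric bilinear form $\langle\cdot,\cdot\rangle$ on $K^n$ in the standard basis $e_1,\dots,e_n$. First I would argue that there exists a vector $v\in K^n$ with $\langle v,v\rangle\neq 0$. Indeed, if $\langle e_i,e_i\rangle\neq 0$ for some $i$ we are done; otherwise all diagonal entries of $X$ vanish, but $X\neq 0$ (it is invertible), so some off-diagonal entry $x_{ij}=\langle e_i,e_j\rangle$ is nonzero, and then the polarization identity $\langle e_i+e_j,e_i+e_j\rangle = \langle e_i,e_i\rangle+2\langle e_i,e_j\rangle+\langle e_j,e_j\rangle = 2x_{ij}\neq 0$ (here we use $\mathrm{char}(K)=0$) produces the desired $v$. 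After a change of basis by an invertible matrix over $K$ we may assume $v=e_1$, i.e. $d_1:=x_{11}\in K^*$. Now perform the Lagrange completion of the square: replace each $e_j$, $j\geq 2$, by $e_j' := e_j - \frac{x_{1j}}{d_1}e_1$; this is an invertible upper-triangular change of basis over $K$ (all denominators equal $d_1\in K^*$), and in the new basis the form has Gram matrix $\mathrm{Diag}(d_1)\oplus X'$ where $X'\in\mathrm{GL}_{n-1}(K)$ is again symmetric (invertibility of $X'$ follows since the whole block matrix is invertible). Apply the induction hypothesis to $X'$ to get $F',D'\in\mathrm{GL}_{n-1}(K)$ with $D'$ diagonal and $X'=F'D'F'^t$; assembling the base-change matrices gives $F\in\mathrm{GL}_n(K)$ and $D:=\mathrm{Diag}(d_1)\oplus D'$ with $X=FDF^t$.

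The only subtlety — and the single place where characteristic zero is genuinely used — is the existence of an anisotropic vector $v$, handled above by the polarization identity; everything else is bookkeeping of invertible congruence transformations, all defined over $K$. This lemma will then be used to write any symmetric $A\in\mathrm{SO}_3$-type gauge as $\lambda B$ with $B\in\mathrm{SO}_3$ over $\overline{\mathbf{k}}$, reducing the nonlinear equation \eqref{eq1} to a linear one.
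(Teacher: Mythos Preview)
Your proof is correct and follows essentially the same approach as the paper's: both arguments reduce to finding an anisotropic vector (via the polarization identity $\langle e_i+e_j,e_i+e_j\rangle=2x_{ij}$ when all diagonal entries vanish, which is where characteristic zero enters), then clear the first row and column by congruence transformations with coefficients $-x_{1j}/x_{11}$, and recurse. The only cosmetic difference is that you phrase everything in the language of symmetric bilinear forms and induction, whereas the paper works directly with explicit transvection and permutation matrices $T_{i,j}=\mathrm{Id}_n+E_{i,j}$ and iterates; the underlying computations are identical.
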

 
 \begin{proof}
 Let $E_{i,j}$ be the square matrix of order $n$ whose $(i,j)$ entry is $1$ and the other entries are $0$. 
For $i\neq j$, let $T_{i,j}$ be the corresponding  transvection matrix, that is $\mathrm{Id}_n+E_{i,j}$. 
If we consider 
$T_{i,j}X(T_{i,j})^{t}$ we make the following operations: the column $C_i$ of $X$ is replaced by $C_{j}+C_i$ and then the line $L_i$ is replaced by $L_{i}+L_j$. Take $i=1$. This shows that $X_{1,1}$ is replaced by $X_{1,1}+X_{j,1}+X_{1,j} +X_{j,j}=X_{1,1}+2X_{1,j}+X_{j,j}$. \par 
Let us see how to reduce to the case $X_{1,1}\neq 0$.  Assume $X_{1,1}=0$. If there exists $j>1$ such that $X_{j,j}\neq 0$ then we replace $X$ by $PXP^t$ where $P = E_{1,j} + E_{j,1} + \sum\limits_{k\neq 1,j}E_{k,k}$ is the permutation matrix which exchanges the lines $1$ and $j$. We have $P= P^{-1} = P^{t}$. If $X_{j,j}=0$ for all $j$, we consider $j$ such that $X_{1,j}\neq 0$ (such $j$ exists otherwise the first line is only composed of $0$ entries, contradicting  the fact that $X$ is invertible).  Let us perform 
$T_{1,j}X(T_{1,j})^{t}$ so that $X_{1,1}$ is replaced by $X_{1,1}+2X_{1,j}+X_{j,j}=2X_{1,j}\neq 0$.   So we may reduce to the case where $X_{1,1}\neq 0$. \par 
For $i\in \lbrace 2,\ldots,n\rbrace$, let $T  := \mathrm{Id}_n + \alpha_i E_{i,1}$ with $\alpha_i:=- X_{i,1}/X_{1,1}$. Computing $TXT^{t}$, $X_{i,1}$ is replaced by  $\alpha_i X_{1,1} + X_{i,1} = 0$ thus, as in a Gaussian Pivot, we force $X_{2,1}=\dots =X_{n,1}=0$. We also have  $X_{1,2}=\dots =X_{1,n}=0$ since the matrix obtained remains symmetric.  We repeat the procedure to  the other lines and columns to find the result. 
 \end{proof}
 
 Toward an efficient way to check that \eqref{eq1} holds,  let us prove the following. 
 
 \begin{lem}\label{lem6}
The following facts are equivalent. 
\begin{itemize}
\item There exist $T\in \mathrm{GL}_3(\overline{\mathbf{k}})$,  $\lambda \in \overline{\mathbf{k}}^*$  and $B\in  \mathrm{SO}_3(\overline{\mathbf{k}})$ such that \eqref{eq1} holds. 
\item There exist $\lambda \in \overline{\mathbf{k}}^* $,  $X\in \mathrm{GL}_3(\overline{\mathbf{k}})$ such that $X=X^t$,  and  $A XA^t =\lambda^2\phi(X)$.
\end{itemize}
\end{lem}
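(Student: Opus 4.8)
The plan is to convert the multiplicative condition \eqref{eq1}, namely that $[A]$ be equivalent over $\overline{\mathbf{k}}$ to a scalar multiple of an $\mathrm{SO}_3$-valued matrix, into the bilinear condition $AXA^t=\lambda^2\phi(X)$ on a symmetric invertible matrix $X$. The dictionary between the two is: given the gauge matrix $T$, one sets $X:=T^{-1}(T^{-1})^t$; conversely, given $X$, one writes $X=SS^t$ and takes $T:=S^{-1}$. Throughout I use that $\phi$ is a field automorphism, hence commutes with taking transposes, inverses and determinants.

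For the first implication, suppose $\phi(T)AT^{-1}=\lambda B$ with $B\in\mathrm{SO}_3(\overline{\mathbf{k}})$, and set $X:=T^{-1}(T^{-1})^t$, which is symmetric and lies in $\GL_3(\overline{\mathbf{k}})$. The orthogonality relation $BB^t=\mathrm{Id}_3$ reads $\lambda^{-2}\,\phi(T)\,A\,T^{-1}(T^{-1})^t\,A^t\,\phi(T)^t=\mathrm{Id}_3$; moving $\phi(T)$ and $\phi(T)^t$ to the right-hand side gives $AXA^t=\lambda^2\,\phi(T)^{-1}(\phi(T)^{-1})^t=\lambda^2\,\phi\!\big(T^{-1}(T^{-1})^t\big)=\lambda^2\phi(X)$. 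This is a one-line manipulation, with no obstacle.

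For the converse, suppose $AXA^t=\lambda^2\phi(X)$ with $X=X^t\in\GL_3(\overline{\mathbf{k}})$. Apply Lemma \ref{lem4} over the characteristic-zero field $\overline{\mathbf{k}}$ to write $X=FDF^t$ with $D=\mathrm{Diag}(d_1,d_2,d_3)$ and all $d_i\neq 0$; since $\overline{\mathbf{k}}$ is algebraically closed, choose $e_i$ with $e_i^2=d_i$, put $E:=\mathrm{Diag}(e_1,e_2,e_3)$ and $S:=FE$, so that $X=SS^t$. Setting $T:=S^{-1}$ and $B:=\lambda^{-1}\phi(T)AT^{-1}=\lambda^{-1}\phi(S)^{-1}AS$, one computes
$$BB^t=\lambda^{-2}\phi(S)^{-1}A(SS^t)A^t(\phi(S)^{-1})^t=\lambda^{-2}\phi(S)^{-1}(AXA^t)(\phi(S)^{-1})^t=\phi(S)^{-1}\phi(X)(\phi(S)^{-1})^t=\mathrm{Id}_3,$$
using $\phi(X)=\phi(S)\phi(S)^t$ in the last step. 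Hence $B\in\mathrm{O}_3(\overline{\mathbf{k}})$ and $\phi(T)AT^{-1}=\lambda B$.

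The only point that needs care is the determinant: a priori $\det B=\pm 1$, while the statement demands $B\in\mathrm{SO}_3$. If $\det B=-1$, use that $3$ is odd, so that $\det(-B)=(-1)^3\det B=1$; replacing $(\lambda,B)$ by $(-\lambda,-B)$ leaves the identity $\phi(T)AT^{-1}=\lambda B$ unchanged, and now $-B\in\mathrm{SO}_3(\overline{\mathbf{k}})$. I expect this tiny parity argument, together with the observation that a symmetric invertible matrix over an algebraically closed field is always of the form $SS^t$ (which is exactly what turns Lemma \ref{lem4} into the factorisation needed here), to be the only non-routine ingredients; everything else is bookkeeping with transposes and the automorphism property of $\phi$.
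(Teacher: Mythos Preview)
Your proof is correct and follows essentially the same route as the paper: both directions use the dictionary $X=T^{-1}(T^{-1})^t$, with Lemma~\ref{lem4} plus square roots in $\overline{\mathbf{k}}$ to produce $S$ with $X=SS^t$ in the converse direction. In fact you are slightly more careful than the paper: the paper only checks $BB^t=\mathrm{Id}_3$ and then asserts $B\in\mathrm{SO}_3(\overline{\mathbf{k}})$, whereas you explicitly fix the sign via $(\lambda,B)\mapsto(-\lambda,-B)$ using that $3$ is odd, which is the missing justification for passing from $\mathrm{O}_3$ to $\mathrm{SO}_3$.
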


\begin{proof}
Let us prove that the first point implies the second.  
Let us assume the existence of  $T\in \mathrm{GL}_3(\overline{\mathbf{k}})$,  $\lambda \in \mathbf{\overline{\mathbf{k}}}^*$,  and $B\in  \mathrm{SO}_3(\overline{\k})$ such that \eqref{eq1} holds. Let us set $P:=T^{-1}$ and note that $P^t$ is the inverse of $T^t$.  Using $BB^t=\mathrm{Id}_3$ we find that \eqref{eq1} implies
 $$\phi(T)APP^t A^t \phi(T^t)=\lambda^2 \mathrm{Id}_3.$$
The latter is equivalent to 
 $$APP^t A^t =\lambda^2\phi(PP^t).$$
 We obtain the result if we set $X=PP^t$. \par 
 Conversely,  assume that the second item holds.  By Lemma \ref{lem4},  let us write $X=FDF^t$ with matrices whose coefficients are in $\overline{\mathbf{k}}$. Since $\overline{\mathbf{k}}$ is algebraically closed,  $F\sqrt{D}$ has coefficients in $\overline{\mathbf{k}}$ and we may write 
 $X=F\sqrt{D}(\sqrt{D})^t F^t$, where $\sqrt{D}$ is any diagonal matrix whose square is $D$.  Let us perform $\phi(T)AT^{-1}$ where $T\in \mathrm{GL}_3(\overline{\mathbf{k}})$ is the inverse of $F\sqrt{D}$.  Note that $X=T^{-1}(T^{-1})^t $.  It suffices to prove that $B=\phi(T)AT^{-1}\lambda^{-1}\in \mathrm{SO}_3(\overline{\mathbf{k}})$. 
 Then, 
 $$B B^t=\lambda^{-2} \phi(T)AT^{-1}(T^{-1})^t A^{t}\phi(T^t)=\lambda^{-2} \phi(T)AX A^{t}\phi(T^t).$$
 We now use $A XA^t =\lambda^{2}\phi(X)$ to deduce that the latter equals 
 $ \phi(T)\phi(X) \phi(T^t). $
With $X=T^{-1}(T^{-1})^t $, we deduce that  $\phi(T)\phi(X) \phi(T^t)$ is the identity, that is $B\in  \mathrm{SO}_3(\overline{\mathbf{k}})$. 
\end{proof}

If we replace $X$ by $X':=X\det(X)^{-1/3}\in \mathrm{SL}_3 (\overline{\mathbf{k}})$,  we find $A X'A^t =\det(A)^{2/3}\phi(X')$. Indeed, the equality $AXA^t=\lambda^2 \phi(X)$ is equivalent to 
\begin{equation}
\label{eq:for_determinant}
AX'A^t = \lambda^2\det(X)^{-1/3}\det(\phi(X))^{1/3} \phi(X')\, .
\end{equation}
Taking the determinant in \eqref{eq:for_determinant} and using that $\det(X')=1$, we have $\det(A)^2=\lambda^6\det(X)^{-1}\det(\phi(X))$. We obtain the desired result thanks to 
 \eqref{eq:for_determinant} and this last equality.
With Lemma \ref{lem3},  and Lemma \ref{lem4},  we have proved the following result.
 
\begin{prop}\label{prop3}
Assume that $G$ is irreducible and primitive.   Then,  $D(G)=\mathrm{SO}_3(C)$ if and only if there exists $X\in \mathrm{SL}_3(\overline{\mathbf{k}})$ such that $X=X^t$ and  $A XA^t =\det(A)^{2/3}\phi(X)$.
\end{prop}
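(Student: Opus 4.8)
The plan is to assemble the proposition from the three preceding lemmas together with the determinant normalisation carried out just above its statement. First I would apply Lemma~\ref{lem3}: since $G$ is irreducible and primitive, $D(G)=\SO_3(C)$ is equivalent to the existence of $T\in\GL_3(\overline{\mathbf{k}})$, $\lambda\in\overline{\mathbf{k}}^{*}$ and $B\in\SO_3(\overline{\mathbf{k}})$ with $\phi(T)AT^{-1}=\lambda B$, that is, to \eqref{eq1}. Then Lemma~\ref{lem6} reformulates \eqref{eq1} as the existence of $\lambda\in\overline{\mathbf{k}}^{*}$ and a symmetric $X\in\GL_3(\overline{\mathbf{k}})$ such that $AXA^{t}=\lambda^{2}\phi(X)$ (and the construction in that proof, via Lemma~\ref{lem4}, is explicit). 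So the only thing left is to replace this ``projective'' form of the statement, in which the scalar $\lambda$ is free, by the normalised one, in which $X$ lies in $\SL_3(\overline{\mathbf{k}})$ and the scalar is pinned down to $\det(A)^{2/3}$.

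For the forward implication, given a symmetric $X\in\GL_3(\overline{\mathbf{k}})$ and $\lambda$ with $AXA^{t}=\lambda^{2}\phi(X)$, I would set $X':=X\det(X)^{-1/3}$; this makes sense because $\overline{\mathbf{k}}$ is algebraically closed, and $X'$ is again symmetric and now lies in $\SL_3(\overline{\mathbf{k}})$. Substituting $X=\det(X)^{1/3}X'$ turns the relation into \eqref{eq:for_determinant}; taking determinants on both sides of \eqref{eq:for_determinant}, using $\det(X')=1$ and $\det(A^{t})=\det(A)$, gives $\det(A)^{2}=\lambda^{6}\det(X)^{-1}\det(\phi(X))$, which identifies the scalar $\lambda^{2}\det(X)^{-1/3}\det(\phi(X))^{1/3}$ occurring in \eqref{eq:for_determinant} with a cube root of $\det(A)^{2}$; after making the cube-root choices coherently this scalar is $\det(A)^{2/3}$, so $AX'A^{t}=\det(A)^{2/3}\phi(X')$. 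Conversely, if $X\in\SL_3(\overline{\mathbf{k}})$ is symmetric and satisfies $AXA^{t}=\det(A)^{2/3}\phi(X)$, then setting $\lambda:=\det(A)^{1/3}$ we are back to $AXA^{t}=\lambda^{2}\phi(X)$ with $X$ symmetric and invertible, and Lemmas~\ref{lem6} and~\ref{lem3} yield $D(G)=\SO_3(C)$.

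The one delicate point, and the step I would write out with care, is the bookkeeping of cube roots: $\det(A)^{1/3}$, $\det(X)^{1/3}$, $\det(\phi(X))^{1/3}$ and $\det(A)^{2/3}$ are each defined only up to a primitive cube root of unity in $\overline{\mathbf{k}}$, so one must check that a coherent choice can be made for which the displayed identities hold exactly rather than merely up to such a factor. This is harmless: scaling $X'$ by a cube root of unity keeps it symmetric and in $\SL_3(\overline{\mathbf{k}})$ while scaling $\phi(X')$ by the same factor, so any discrepancy can be absorbed into the choice of $X'$. No ingredient beyond Lemmas~\ref{lem3}, \ref{lem4} and~\ref{lem6} and this normalisation is needed.
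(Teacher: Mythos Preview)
Your proposal is correct and follows essentially the same route as the paper: chain Lemma~\ref{lem3} with Lemma~\ref{lem6} (the latter relying on Lemma~\ref{lem4}) to reduce $D(G)=\SO_3(C)$ to the existence of a symmetric $X\in\GL_3(\overline{\mathbf{k}})$ with $AXA^{t}=\lambda^2\phi(X)$, and then normalise via $X':=X\det(X)^{-1/3}$ and the determinant computation \eqref{eq:for_determinant} to force the scalar to be $\det(A)^{2/3}$. Your explicit discussion of the cube-root ambiguities is a welcome bit of extra care that the paper leaves implicit.
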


Proposition \ref{prop3} then reduces the criterion on $D(G)$ to look at solutions in $\overline{\mathbf{k}} $ of linear $\phi$-difference systems with coefficients in $\overline{\mathbf{k}}$. More precisely the condition $X=X^t$ drops the number of unknowns from $9$ to 
$6$.  
Solving a system in $\overline{\mathbf{k}}$ is in general a complicated task but in many examples, by studying the poles of the system for instance, we might be able to prove that the system has no solutions, showing that $D(G)\neq \mathrm{SO}_3(C)$.

Let us now see  how to compute the reduced form.  
First, assume that $D(G)=\mathrm{SL}_3 (C)$. 
We compute $\det(G)$ as the difference Galois group of an order one system $\phi(y)=a_0 y$ with reduction matrix $f\in \k^*$, that is $\phi(y)=a_0\frac{\phi(f)}{f} y$ is reduced with Galois group $\det(G)$.
Then, we perform, as in \cite[Section 4.4]{hendriks1998algorithm}, the gauge transformation  $T=\mathrm{Diag}(f,1,1)$ to obtain a system $[B]$.  Since $\det(B)=a_0\frac{\phi(f)}{f}$,   and $\phi(y)=a_0\frac{\phi(f)}{f}y$ is reduced with Galois group $\det(G)$,  we find that $B\in G(\mathbf{k})$.  Then,  $B$ is a reduced form and $T$ is the reduction matrix.   \par
Now consider the case where $D(G)= \mathrm{SO}_3(C)$. 
By Lemma \ref{lem3}, we know that there exist $T\in \mathrm{GL}_3(\k)$,  $\lambda \in \mathbf{k}^*$  and $B\in  \mathrm{SO}_3(\mathbf{k})$ such that \eqref{eq1} holds. As above, let us set $X'=T^{-1}(T^{-1})^{t}\det(T)^{2/3}\in \mathrm{SL}_3 (\overline{\mathbf{k}})$. Then, $X'$ satisfies $X'=(X')^t$ and  $A X'A^t =\det(A)^{2/3}\phi(X')$. Let us write $X'=FDF^t$ as in Lemma \ref{lem4}.   Note that the proof of Lemma \ref{lem4} is constructive so $F$ and $D$ may be effectively computed. 
Let us perform the gauge transformation  $\phi(\alpha (F\sqrt{D})^{-1})A(\alpha (F\sqrt{D})^{-1})^{-1}$ where $(F\sqrt{D})^{-1}\in \mathrm{GL}_3(\overline{\mathbf{k}})$ and $\alpha \in \overline{\mathbf{k}}^*$. From what precedes, all reduction matrices with coefficients in $\overline{\mathbf{k}}$  are of this form. Since there is a reduction matrix with coefficients in $\k$,
there is a choice of $X'$  and $\alpha$ such that the corresponding $\alpha (F\sqrt{D})^{-1}$ has entries in $\k$. Note that this procedure is purely theoretical and not effective. So if $\alpha (F\sqrt{D})^{-1}$ does not have entries in  $\mathbf{k}$ for all $\alpha \in \overline{\mathbf{k}}^*$,  we take another solutions $X'$ 
with a corresponding $\alpha (F\sqrt{D})^{-1}\in \mathrm{GL}_3(\mathbf{k})$. At this stage, we have $\phi(\alpha (F\sqrt{D})^{-1})A(\alpha  (F\sqrt{D})^{-1})^{-1}=\beta A_0$, with $\beta \in \mathbf{k}^*$  and $A_0\in \mathrm{SO}_3 (\mathbf{k})$. By Theorem~\ref{theo:reduced form} and Proposition \ref{prop5}, we find that a reduction matrix is of the form $\alpha_1 \alpha (F\sqrt{D})^{-1}$, $\alpha_1 \in \mathbf{k}^*$.  We may compute $\alpha_1$ by reducing the system $\phi (y)=\beta y$. Hence the reduced form is $\frac{\phi (\alpha_1)}{\alpha_1}\beta A_0$.

\begin{ex}\label{ex4}
Consider $G$ the difference Galois group of \eqref{eq0} over $\C(z^{1/*})$. Assume that $t\neq 0$. Assume by contradiction that $D(G)=\mathrm{SO}_3(\C)$.
The eigenvalues of matrices of $\mathrm{SO}_3(\C)$ are of the form $(1,a^{\pm 1})$,  $a\in \C^*$.  Then,  the eigenvalues of elements of $G$ are for the form $(c,ca^{\pm 1})$,  $c,a\in \C^*$.   By \cite[Section 2.4]{RS2009}, combined with the computation of the Newton polygon in Example \ref{ex2}, $G$ contains the so-called theta torus,  elements of the form $\mathrm{Diag}(1,1,d)$, $d\in \C^*$.  If the triple  $\mathrm{Diag}(1,1,d)$ is of the form $(c,ca^{\pm 1})$, then either $a=1=c=d$, or $a=c=-1$ and $d=-1$.  Then,  $\mathrm{Diag}(1,1,2)$ can never be of the form $(c,ca^{\pm 1})$,  showing that $D(G)\neq \mathrm{SO}_3(\C)$.  Then $D(G)=\mathrm{SL}_3(\C)$. 
\end{ex}

 \subsection{Summary}
  Let 
 $$
 L := \phi^3 + a_2 \phi^2 +a_1 \phi +a_0\in \k\left< \phi,\phi^{-1}\right>
 $$  and let $G\subset \GL_{3}(C)$ be the Galois group of $Ly=0$.
 
If the Riccati  equation  attached to $L$ or the similar equation attached to $L^{\vee}$  has a solution then $G$ is reducible. Otherwise, it is irreducible.
\medskip

When it is the reducible case, $L$ or  $L^{\vee}$ has a right factor of order $1$ denoted by $\phi-\alpha$. Consider the case where $L$  has a right factor of order $1$, the other situation is similar. Therefore, up to an isomorphism, the Galois group $G$ is the one of the system 
$$\phi (Y)= \left(\begin{array}{c|c}
\alpha &  1 \hspace*{0.3cm} 0  \\
\hline
0 &  A_2 
\end{array}\right)Y \, . $$
We already know how to reduce the systems $[\alpha]$ and $[A_2]$ (order $1$ and $2$) and we denote respectively by $[\beta]$ and $[B_2]$ these reduced systems. We consider the system $[A]$ which is equivalent to this previous system with $A$ of the form  
$\begin{pmatrix}
\beta &\star \\ 
0 & B_2
\end{pmatrix}$. To reduce the system $[A]$ and compute its Galois group, which is $G$ up to an isomorphism, we proceed in two steps:
\begin{enumerate}
    \item With Theorem \ref{thm:reduc_blocdiag}, we reduce the bloc diagonal system $[D]$ where $D = \begin{pmatrix}
\beta &0 \\ 
0 & B_2
\end{pmatrix}$ and we find an associated reduction  matrix denoted by $T'$.
\item We reduce the system $[B]$ where $B:=\phi(T')A T'^{-1}$. Doing this, we introduce some linear equations and we know which is the group $G$ if we know that there exists a solution or not of these equations (see equations \eqref{eq:eqverif}, \eqref{eqaverifier}, \eqref{eqaverifierbis}, \eqref{eq:eqverif3}) 
\end{enumerate}

\medskip

When it is not the reducible case, we have to distinguish between the imprimitive case and the primitive case. Let $A:=A_L$. We know that it is imprimitive if and only if the difference Galois group of $\phi^3 Y=A_{[3]}Y$ is diagonal. We use the reducible case (with $\phi$ replaced by $\phi^3$) to check this property.  

\medskip

Finally,  when it is irreducible and primitive, we have to distinguish between $D(G)=\mathrm{SO}_3 (C)$ and  $D(G)=\mathrm{SL}_3 (C)$. We have $D(G)=\mathrm{SO}_3 (C)$, if and only if there is a symmetric solution over $\overline{\mathbf{k}}^*$ of the linear difference equation $A XA^t =\det(A)^{2/3}\phi(X)$.  

Combining the facts proved in Examples \ref{ex1},  \ref{ex2},  \ref{ex3}, and  \ref{ex4}, we then find the Galois group of \eqref{eq0} over  $\C(z^{1/*})$:

\begin{thm}\label{thm2}
For all $t\neq 0$, the difference Galois group of \eqref{eq0} over  $\C(z^{1/*})$ is $\mathrm{GL}_3(\C)$.
\end{thm}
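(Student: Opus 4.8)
The plan is to run the order-three algorithm of Section~\ref{secord3} on \eqref{eq0} and to assemble the four facts already isolated in Examples~\ref{ex1}--\ref{ex4}. Fix $t\in\C^{*}$, let $L$ be the associated $q$-difference operator and $G\subset\GL_{3}(\C)$ its difference Galois group over $\k:=\C(z^{1/*})$. First I would recall Example~\ref{ex1}: there the Riccati equation attached to $L$, and the analogous one attached to the dual $L^{\vee}$, are each shown to be unsolvable in $\k^{*}$ by a degree count on the polynomial identity \eqref{eq:bla} (for the only admissible exponents $0\le d\le 1$ the term of degree $2+2d+\deg r(z)$ strictly dominates the other three, so the sum cannot vanish). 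By Lemma~\ref{lem1}, together with the reduction of left factors to right factors via the dual, $L$ then has no factor of order one, hence no nontrivial factor at all, so $G$ is irreducible.

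Next I would rule out imprimitivity using Example~\ref{ex2}: by Lemma~\ref{lem5}, if an irreducible Galois group of a prime-order equation were imprimitive, the lower boundary of the Newton polygon of the operator would be the single segment from $(0,\val(u))$ to $(3,0)$; but the Newton polygon of \eqref{eq0} has lower boundary formed by the segment from $(0,1)$ to $(1,0)$ and the segment from $(1,0)$ to $(3,0)$ (Figure~\ref{fig1}). Hence $G$ is irreducible and primitive, and I land in the last branch of the algorithm. By Proposition~\ref{prop5}, $G=Z(G).D(G)$ with $Z(G)$ scalar and $D(G)\in\{\SL_{3}(\C),\SO_{3}(\C)\}$.

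To exclude $\SO_{3}$ I would invoke Example~\ref{ex4}. Elements of $\SO_{3}(\C)$ have eigenvalue triples $(1,a,a^{-1})$, so elements of $\C^{*}.\SO_{3}(\C)$ have eigenvalue triples $(c,ca,ca^{-1})$. On the other hand, the local analysis of \cite{RS2009} applied to the Newton polygon computed in Example~\ref{ex2} shows that $G$ contains the theta torus, i.e.\ every $\mathrm{Diag}(1,1,d)$ with $d\in\C^{*}$; a short check shows that $\mathrm{Diag}(1,1,d)$ has the shape $(c,ca,ca^{-1})$ only when $d\in\{1,-1\}$, so $\mathrm{Diag}(1,1,2)$ cannot lie in $\C^{*}.\SO_{3}(\C)$. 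Therefore $D(G)=\SL_{3}(\C)$, whence $G^{0}$ is $\SL_{3}(\C)$ or $\GL_{3}(\C)$ and, since $G/G^{0}$ is finite cyclic, $G$ is either $\GL_{3}(\C)$ or $\{M\in\GL_{3}(\C)\mid\det(M)^{k}=1\}$ for some $k\in\N^{*}$.

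It then remains to pin down $\det(G)$, which is the Galois group of the order-one equation $\phi(y)=\det(A)y=(-q^{3}z/t)y$. Here I would use Example~\ref{ex3}: by Proposition~\ref{propord1}, $\det(G)$ is finite if and only if $\phi(f)=(\zeta/\alpha)f$ has a solution $f\in\k^{*}$ for some root of unity $\zeta$, with $\alpha=-q^{3}z/t$; but $\alpha$ has $z$-adic valuation $1$ while $f$ and $\phi(f)$ have equal valuation, so no such $f$ exists. Hence $\det(G)=\C^{*}$, which is compatible only with $G=\GL_{3}(\C)$ (the other case would give $\det(G)\cong\Z/k\Z$). This settles the theorem for every $t\in\C^{*}$. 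The only genuinely substantive steps are the two already carried out in Examples~\ref{ex1} and \ref{ex4} — the degree count killing all order-one factors, and the eigenvalue/theta-torus argument excluding $\SO_{3}$ — while everything else is bookkeeping within the framework of Sections~\ref{sec:imprimitive case}--\ref{secord3}.
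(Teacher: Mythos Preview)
Your proposal is correct and follows exactly the paper's own approach: the paper's proof of Theorem~\ref{thm2} is simply the one-line statement ``Combining the facts proved in Examples~\ref{ex1}, \ref{ex2}, \ref{ex3}, and~\ref{ex4}\ldots'', and you have faithfully unpacked this combination (irreducible via Example~\ref{ex1}, primitive via Example~\ref{ex2}, $D(G)=\SL_3$ via Example~\ref{ex4}, and $\det(G)=\C^*$ via Example~\ref{ex3}, forcing $G=\GL_3(\C)$).
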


\section{Application to the differential transcendence}\label{sec:appliDiffalTranscendence}

Let $\k$ be a field of characteristic zero and $(\k,\phi,\partial)$ be a $\mathcal{C}^1$-field with no proper finite difference field extension,   equipped with an automorphism $\phi$, and a derivation $\partial$, that is an additive morphism satisfying the Leibnitz rule $\partial(fg)=\partial(f)g+f\partial(g)$. Let us assume that $\phi$ and $\partial$ commute.  In our examples, we may take $\partial=\partial_z$ (case S and E),  and $\partial=z \partial_z$ (case Q).  In the Mahler case,  we could adapt as in \cite{dreyfus2018hypertranscendence} the framework with the derivation $\partial= z \partial_z \log(z)$ but this will complicate the exposition of a general criterion, so let us drop the Mahler case in what follows. \par 
Let us consider $R$, a ring extension of $\k$ such that $\phi$ and $\partial$ extend on $R$.  We say that $f\in R$ is $\partial$-algebraic over $\k$ if there exists $n\in\N$ and a nonzero $P\in \k [X_0,\dots, X_n]$ such that $P(f,\dots, \partial^n f)=0$.  We say that $f$ is $\partial$-transcendent over $\k$ otherwise.  In  our examples,  $f$ is $\partial$-algebraic over $\k$ if and only if it is $\partial_z$-algebraic over $\k$.  Many proof of $\partial$-transcendence of solutions of $\phi$-equation are based on the difference Galois theory.  Roughly speaking, this kind of theorem says that if the difference Galois group is sufficiently big, then the nonzero solutions of the linear equation are $\partial$-transcendent.  Let $f\in R^*$ be a solution of 
$$ a_3\phi^3 (y)+ a_2 \phi^2 (y)+a_1 \phi (y)+a_0 y=0, \quad \text{where}\quad  a_i\in \k\quad \text{and}\quad  a_0 a_3 \neq 0.$$
Consider the dual 
$\sum_{i=0}^{3}  \phi^{-i} (a_{i})
\phi^{-i}$ that is equivalent to 
$$ \phi^3(a_0)\phi^3 (y)+ \phi^2(a_1) \phi^2 (y)+\phi(a_2) \phi (y)+a_3 y=0, \quad \text{where}\quad  a_i\in \k.$$

\begin{thm}

Let us assume that 
\begin{itemize}
\item there is no $u\in \k$ such that
$$a_3 u\phi(u)\phi^2 (u)+a_2 u\phi(u)+ a_1 u+a_0=0.$$
\item there is no $u\in \k$ such that
$$\phi^3(a_0) u\phi(u)\phi^2 (u)+\phi^2(a_1) u\phi(u)+ \phi(a_2) u+a_3=0.$$
\item there is no $g\in \k$ and nonzero linear differential operator $L\in C[\partial]$, such that  
$$L(\partial (a_0/a_3 )/(a_0/a_3))=\phi(g)-g. $$
\end{itemize}
Then,  $f$ is $\partial$-transcendent over $\k$.
\end{thm}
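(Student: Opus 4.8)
The three hypotheses are exactly the negations of the factorisation/reduction conditions that would make the difference Galois group $G$ of the order three equation $Ly=0$ (equivalently of its companion system $[A_L]$) small. So the strategy is: first use the Riccati criterion (Lemma~\ref{lem:eq left factor}) to show $G$ is irreducible, then use Example~\ref{ex1}'s / Lemma~\ref{lem1}'s dual trick together with the Newton-polygon shortcut of Example~\ref{ex2} or a direct argument to rule out imprimitivity, then use Proposition~\ref{prop3} to show $D(G)\neq\mathrm{SO}_3(C)$, so that by Proposition~\ref{prop5} we have $D(G)=\mathrm{SL}_3(C)$ and $G\supset\mathrm{SL}_3(C)$. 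Finally, one feeds this into the differential Galois descent machinery of \cite{dreyfus2021functional} (or \cite{HS08,arreche2017galois}): the third hypothesis guarantees that the parametrised (differential) Galois group is also big, so a nonzero solution $f$ cannot satisfy an algebraic differential equation over $\k$.

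\textbf{Step-by-step.} First I would observe that the first hypothesis says precisely that the Riccati equation
$a_3 u\phi(u)\phi^2(u)+a_2 u\phi(u)+a_1 u+a_0=0$
has no solution in $\k$, i.e.\ $L$ has no right factor of order one; the second hypothesis says the same for $L^\vee$, i.e.\ $L$ has no left factor of order one. Since $L$ has order three, Lemma~\ref{lem1} together with the remark that an order three operator factors iff it has an order-one (left or right) factor shows $G$ is irreducible. Next, to exclude the imprimitive case: by Proposition~\ref{prop1}, if $G$ were imprimitive then the Riccati equation attached to $\phi^3Y=A_{[3]}Y$ would have a solution $d\in\k^*$; and by Lemma~\ref{lem5} / Proposition~\ref{prop2} one could write $L$ (up to the dual and an equivalence) with a right factor structure controlled by $d$ — but one must connect this back to a right factor of $L$ itself over $\k$. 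A cleaner route, valid whenever one can compute the Newton polygon, is the criterion in Example~\ref{ex2}: if $G$ is irreducible and imprimitive then the lower boundary of the Newton polygon of $L$ is the single slope from $(0,\mathrm{val}(u))$ to $(3,0)$; if the polygon has a vertex strictly between, imprimitivity is excluded. In the abstract statement here I would instead note that imprimitivity forces the existence of $d\in\k^*$ solving the Riccati equation of $\phi^3Y=A_{[3]}Y$ and that, by the structure in Proposition~\ref{prop2}, this produces an order-one right factor of the third iterate, contradicting (a suitable consequence of) the first two hypotheses — but I flag that this implication needs to be made precise, and it is the delicate point of the argument.

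\textbf{Then the primitive case.} Assuming $G$ is irreducible and primitive, Proposition~\ref{prop5} gives $G=Z(G).D(G)$ with $D(G)\in\{\mathrm{SL}_3(C),\mathrm{SO}_3(C)\}$. By Proposition~\ref{prop3}, $D(G)=\mathrm{SO}_3(C)$ iff there is a symmetric $X\in\mathrm{SL}_3(\overline{\k})$ with $A XA^t=\det(A)^{2/3}\phi(X)$, which after writing things out is again an order-one type condition on the entries — and under the standing hypotheses of the theorem one would rule this out (as in Example~\ref{ex4}, e.g.\ by an eigenvalue/theta-torus argument, or by poles). Hence $D(G)=\mathrm{SL}_3(C)$, so $G\supseteq\mathrm{SL}_3(C)$. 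Finally, invoking \cite[Corollary~3.1]{dreyfus2021functional} (the same input used right after Theorem~\ref{thm2} for equation~\eqref{eq0}): since $G\supseteq\mathrm{SL}_3(C)$, a nonzero solution $f$ of the equation is $\partial$-transcendent over $\k$ unless the parametrised Galois group degenerates, and the obstruction to that degeneration is exactly a relation of the form $L(\partial(a_0/a_3)/(a_0/a_3))=\phi(g)-g$ for some $g\in\k$ and nonzero $L\in C[\partial]$ — excluded by the third hypothesis. Therefore $f$ is $\partial$-transcendent over $\k$.

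\textbf{Main obstacle.} The routine part is Lemma~\ref{lem:eq left factor} and the dual computation (first two bullets $\Leftrightarrow$ irreducibility) and the final appeal to \cite{dreyfus2021functional} (third bullet $\Rightarrow$ $\partial$-transcendence once $G\supseteq\mathrm{SL}_3$). The genuinely delicate step is showing that, under these hypotheses, the remaining two possibilities — $G$ imprimitive, or $G$ primitive with $D(G)=\mathrm{SO}_3(C)$ — cannot occur. Imprimitivity must be tied back to a Riccati solution over $\k$ for the third iterate and then descended to a factorisation statement about $L$ (this uses Proposition~\ref{prop1}, Proposition~\ref{prop2} and Lemma~\ref{lem:gaugeTransfoEquiv}), and the $\mathrm{SO}_3$ case must be tied to a symmetric solution of $AXA^t=\det(A)^{2/3}\phi(X)$ over $\overline{\k}$ via Proposition~\ref{prop3}; making these two reductions follow cleanly from the three displayed conditions is where the real content of the proof lies, and is the step I would expect to occupy most of the write-up.
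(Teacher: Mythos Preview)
Your proposal has a genuine gap: you try to deduce from the three hypotheses that $G\supseteq\mathrm{SL}_3(C)$, and you correctly identify the imprimitive case and the $\mathrm{SO}_3$ case as the obstacles --- but these obstacles are real. The three hypotheses do \emph{not} exclude either possibility. Irreducible imprimitive groups arise precisely when $L$ and $L^\vee$ have no order-one factor but the third iterate does (this is exactly the content of Proposition~\ref{prop1} and Section~\ref{sec:imprimitive case}); nothing in the first two bullets prevents the Riccati equation for $\phi^3 Y=A_{[3]}Y$ from having a solution in $\k$. Your attempt to ``descend'' such a solution to a factorisation of $L$ itself cannot work, since that would contradict the very existence of irreducible imprimitive examples. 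Similarly, the third bullet is a condition on the logarithmic derivative of $\det(A)$ and has no bearing on whether a symmetric $X$ solving $AXA^t=\det(A)^{2/3}\phi(X)$ exists, so you cannot rule out $D(G)=\mathrm{SO}_3(C)$ either.

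The paper's proof takes a different and much shorter route: it does \emph{not} try to pin $G$ down beyond irreducibility. The first two hypotheses give irreducibility of $G$ exactly as you say. Then the proof splits into the imprimitive case and the primitive case (where automatically $G\supseteq\mathrm{SO}_3(C)$ by Proposition~\ref{prop5}), and in \emph{both} cases appeals directly to \cite[Theorem~3.5]{arreche2021differential}. That theorem takes as input an irreducible difference Galois group together with the non-integrability condition on $\partial(\det A)/\det A$ --- which is precisely the third bullet, since $\det(A_L)=-a_0/a_3$ --- and outputs $\partial$-transcendence of any nonzero solution. So the third hypothesis is not there to force $G$ to be large; it is the hypothesis of the cited transcendence criterion, which already covers the imprimitive and $\mathrm{SO}_3$ situations you were trying to exclude.
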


\begin{proof}
Let $G$ be the difference Galois group of the equation of order three satisfied by $f$.  The first assumption ensures that the operator has no right factor of order one and the second assumption ensures that the operator has no left factor of order one.  So the operator is irreducible and then the difference Galois group is irreducible.  When $G$ is  imprimitive, this is the same reasoning as \cite[Theorem 3.5]{arreche2021differential}.  When $G$ is primitive, it contains $\mathrm{SO}_3 (C)$ and   we now follow the proof  of \cite[Theorem 3.5]{arreche2021differential}. 
\end{proof}

\bibliographystyle{alpha}
\bibliography{biblio}
\end{document}